\declaretheorem[numberwithin=section]{theorem}
\declaretheorem[sibling=theorem]{lemma}
\declaretheorem[sibling=theorem]{proposition}
\declaretheorem{question}
\declaretheorem[style=definition]{definition}
\declaretheorem[style=remark,numbered=no]{remark}
\declaretheorem[name=Some remarks,style=remark,numbered=no]{remarks}
\declaretheorem[style=definition,numberwithin=section]{example}
\newcommand{\rat}{\dashrightarrow}
\newcommand{\set}[1]{\left\{ #1 \right\}}
\newcommand{\abs}[1]{\left\lvert #1 \right\rvert}
\newcommand{\ang}[1]{\left\langle #1 \right\rangle}
\newcommand{\red}{\text{red}}
\newcommand{\cI}{\mathcal I}
\newcommand{\wR}{\widehat{R}}
\newcommand{\fm}{\mathfrak m}
\newcommand{\fn}{\mathfrak n}
\newcommand{\nth}{$^\text{\tiny th}$\xspace}
\DeclareMathOperator{\Alb}{Alb}
\DeclareMathOperator{\Aut}{Aut}
\DeclareMathOperator{\Bl}{Bl}
\DeclareMathOperator{\SL}{SL}
\DeclareMathOperator{\id}{id}
\DeclareMathOperator{\Sing}{Sing}
\DeclareMathOperator{\Stab}{Stab}
\DeclareMathOperator{\Chow}{Chow}
\DeclareMathOperator{\Hilb}{Hilb}
\DeclareMathOperator{\Pic}{Pic}
\DeclareMathOperator{\Proj}{Proj}
\DeclareMathOperator{\Univ}{Univ}
\DeclareMathOperator{\NEb}{\overline{NE}}
\DeclareMathOperator{\Nef}{Nef}
\renewcommand{\P}{\mathbb P}
\newcommand{\A}{\mathbb A}
\newcommand{\C}{\mathbb C}
\newcommand{\F}{\mathbb F}
\newcommand{\Q}{\mathbb Q}
\newcommand{\R}{\mathbb R}
\newcommand{\Z}{\mathbb Z}
\newcommand{\cE}{\mathcal E}
\newcommand{\cO}{\mathcal O}
\newcommand{\Ra}{\mathbb R_{\geq 0} \, }
\newcommand{\Ras}{\mathbb R_{> 0} \,}
\title[Automorphisms of smooth threefolds]{Some constraints on positive entropy \\ automorphisms of smooth threefolds}
\author{John Lesieutre}
\address{Institute for Advanced Study, Einstein Drive, Princeton, NJ 08540 USA}
\email{johnl@math.ias.edu}
\subjclass[2010]{14J50,14E07,37F99,14E30}
\begin{document}

\begin{abstract}
Suppose that \(X\) is a smooth, projective threefold over \(\C\) and that \(\phi : X \to X\) is an automorphism of positive entropy.  We show that one of the following must hold, after replacing \(\phi\) by an iterate: i) the canonical class of \(X\) is numerically trivial; ii) \(\phi\) is imprimitive; iii) \(\phi\) is not dynamically minimal.  As a consequence, we show that if a smooth threefold \(M\) does not admit a primitive automorphism of positive entropy, then no variety constructed by a sequence of smooth blow-ups of \(M\) can admit a primitive automorphism of positive entropy. 

In explaining why the method does not apply to threefolds with terminal singularities, we exhibit a non-uniruled, terminal threefold \(X\) with infinitely many \(K_X\)-negative extremal rays on \(\NEb(X)\).
\end{abstract}

\maketitle

\section{Introduction}

Suppose that \(X\) is a smooth projective variety over \(\C\).  An automorphism \(\phi : X \to X\) is said to have \emph{positive entropy} if the pullback map \(\phi^\ast : N^1(X) \to N^1(X)\) has an eigenvalue greater than \(1\).  By a fundamental result of Gromov and Yomdin, this notion of positive entropy coincides with the one familiar in dynamical systems, related to the separation of orbits by \(\phi\); we refer to \cite{oguisoicm} for an excellent survey of these results.

Although there are many interesting examples of positive entropy automorphisms of projective surfaces, examples in higher dimensions remain scarce.  Our aim in this note is to give some constraints on the geometry of smooth, projective threefolds that admit automorphisms of positive entropy and partly explain this scarcity. These constraints are specific to automorphisms of threefolds: they hold neither for automorphisms of surfaces, nor for pseudoautomorphisms of threefolds.

Before stating the main results, we recall two basic ways in which an automorphism of \(X\) can be built out of automorphisms of ``simpler'' varieties.

\begin{definition}
\label{primitivedef}
An automorphism \(\phi : X \to X\) is \emph{imprimitive} if there  exists a variety \(V\) with   \(1 \leq \dim V < \dim X\), a birational automorphism \(\psi : V \rat V\), and a dominant rational map \(\pi : X \rat V\) such that \(\pi \circ \phi = \psi \circ \pi\).    The map \(\phi\) is called \emph{primitive} if it is not imprimitive~\cite{dqzhangmmp}.  
\end{definition}
For example, if \(\psi : V \to V\) is a positive entropy automorphism, the induced map \(\phi : \P(TV) \to \P(TV)\) of the projectivized tangent bundle also has positive entropy, but is not primitive.

\begin{definition}
\label{minimaldef}
An automorphism \(\phi : X \to X\) is \emph{not dynamically minimal} if there exists a variety \(Y\) with terminal singularities, a birational morphism \(\pi : X \to Y\), and an automorphism \(\psi : Y \to Y\) with \(\pi \circ \phi = \psi \circ \pi\).  If no such \(\pi : X \to Y\) exists, \(\phi\) is called \emph{dynamically minimal}. 
\end{definition}
For example, if \(\psi : Y \to Y\) is a positive entropy automorphism, and \(V \subset Y\) is a \(\psi\)-invariant subvariety, there is an induced automorphism \(\phi : \Bl_V Y \to \Bl_V Y\).  The map \(\phi\) has positive entropy, but it is not dynamically minimal.

The restriction that \(Y\) have terminal singularities is quite natural from the point of view of birational geometry, for these are the singularities that can arise in running the minimal model program (MMP) on \(X\). In dimension \(2\), having terminal singularities is equivalent to smoothness, and dynamical minimality is equivalent to the non-existence of \(\phi\)-periodic \((-1)\)-curves on \(X\).

Positive entropy automorphisms of projective surfaces are in many respects well-understood.  If \(X\) is a smooth projective surface admitting a positive entropy automorphism, it must be a blow-up of either \(\P^2\), a K3 surface, an abelian surface, or an Enriques surface~\cite{cantatbasic}.  Blow-ups of \(\P^2\) at \(10\) or more points have proved to be an especially fertile source of examples, beginning with work of Bedford and Kim~\cite{bedfordkim} and McMullen~\cite{mcmullen}.    However, in higher dimensions, there are very few examples known of primitive, positive entropy automorphisms. The first such example on a smooth, rational threefold was given only recently by Oguiso and Truong~\cite{oguisotruong}.  

One result of this note is that the three-dimensional analogs of the basic blow-up constructions in dimension two can never yield primitive, positive entropy automorphisms.

\begin{restatable}{theorem}{thmblowupautos}
\label{blowupautos}
Suppose that \(M\) is a smooth projective threefold that does not admit any automorphism of positive entropy, and that \(X\) is constructed by a sequence of blow-ups of \(M\) along smooth centers. Then any positive entropy automorphism \(\phi : X \to X\) is imprimitive.
\end{restatable}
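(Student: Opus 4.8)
The plan is to prove the statement by contradiction, descending \(\phi\) through the minimal model program until it is forced onto a minimal model, and then transporting the resulting automorphism back to \(M\). Because the main theorem constrains only an iterate of \(\phi\), imprimitivity below is understood up to replacing \(\phi\) by a positive power, exactly as in that theorem. I will prove, by strong induction on the Picard number, the following statement: \emph{if \(Z\) is a terminal projective threefold admitting a birational morphism \(X' \to Z\) from some smooth blow-up \(X'\) of \(M\), then every positive entropy automorphism of \(Z\) has an imprimitive iterate.} The case \(Z = X\) is the theorem. This class of \(Z\) is exactly what the minimization of Definition~\ref{minimaldef} produces, and running the induction inside it uses the extension of the main theorem to terminal threefolds that its proof provides. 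If \(X = M\) there is nothing to prove, since \(M\) carries no positive entropy automorphism, so we may assume \(X \to M\) is a nontrivial blow-up.

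For the inductive step I apply the trichotomy to an iterate of a given positive entropy automorphism \(\phi\) of \(Z\), seeking an imprimitive iterate and so assuming toward a contradiction that every iterate of \(\phi\) is primitive. Case~(ii) is then excluded outright. In case~(iii) there is a birational morphism \(\pi : Z \to Y\) onto a terminal threefold together with an automorphism \(\psi\) satisfying \(\pi \circ \phi = \psi \circ \pi\). Then \(\psi\) has positive entropy, since the first dynamical degree is invariant under the birational conjugacy \(\psi = \pi \circ \phi \circ \pi^{-1}\); and if \(\psi\) had an imprimitive iterate, witnessed by \(q : Y \rat V\) and \(g : V \rat V\) with \(q \circ \psi = g \circ q\), then \(q \circ \pi\) would display the corresponding iterate of \(\phi\) as imprimitive. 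Since \(\pi\) contracts a divisor, \(\rho(Y) < \rho(Z)\), and \(Y\) inherits a birational morphism from \(X'\); the inductive hypothesis then yields an imprimitive iterate of \(\psi\), hence of \(\phi\), the desired contradiction. There remains case~(i), \(K_Z \equiv 0\). If \(Z\) is not minimal this cannot occur, because a nontrivial blow-up has exceptional curves \(\ell\) with \(K_Z \cdot \ell < 0\), so \(K_Z\) is not nef; and if \(Z\) is minimal, then \(K_Z \equiv 0\) gives \(\kappa(Z) = 0\), hence \(\kappa(M) = 0\) by birational invariance. Thus whenever \(\kappa(M) \neq 0\), case~(i) is impossible and the trichotomy already forces the contradiction.

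The one surviving possibility, and the main obstacle, is \(\kappa(M) = 0\). Here the descent genuinely terminates at a dynamically minimal, primitive, positive entropy automorphism \(\psi_\ast\) of a terminal minimal model \(Y_\ast\) with \(K_{Y_\ast} \equiv 0\), birational to \(M\), and I must now use that \(M\) admits no positive entropy automorphism. Since \(\phi\) descends to \(\psi_\ast\), the exceptional loci of the morphism \(X \to Y_\ast\) are periodic, so after a further iterate its centers — in particular those lying over \(M\) — are \(\psi_\ast\)-invariant. When \(Y_\ast\) contains no rational curves, as for instance in the abelian case, every rational map to it from a smooth variety is a morphism; then \(M \to Y_\ast\) is a morphism and the invariant \(\psi_\ast\) lifts to a positive entropy automorphism of \(M\), a contradiction. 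The genuinely hard case is that in which \(Y_\ast\) carries rational curves: then \(M\) and \(Y_\ast\) are only isomorphic in codimension one, related by a sequence of flops, and \(\psi_\ast\) transports to \(M\) a priori only as a pseudoautomorphism. Controlling these flops to upgrade it to an honest automorphism of \(M\) is the crux of the matter — and is exactly the step that must break down for pseudoautomorphisms, consistent with the fact that the results of this note do not extend to them.
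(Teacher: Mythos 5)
There is a genuine, fatal gap: your inductive step runs on a premise the paper explicitly disclaims. You apply the trichotomy of Theorem~\ref{main} to terminal threefolds \(Z\), citing ``the extension of the main theorem to terminal threefolds that its proof provides.'' No such extension exists. The paper warns that Theorem~\ref{main} ``can not be applied inductively'' precisely because the variety produced in case (3) may be singular, and Section~\ref{singularcase} spells out the two obstructions on a terminal threefold: the first MMP step may be a flip (smoothness is exactly what rules out flipping contractions in Theorem~\ref{conetheorem}), and a flipping curve with infinite \(\phi\)-orbit leaves one with only a pseudoautomorphism of the flipped model; and even when the first step is a divisorial contraction, its exceptional divisor need not be a smooth ruled surface, so Lemma~\ref{rationality} and everything downstream of it (Lemmas~\ref{exceptionalintersections} through~\ref{imprimitive}) is unavailable. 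Your induction therefore has no engine. Separately, your case \(\kappa(M)=0\) is not closed even on your own account: transporting \(\psi_\ast\) from the minimal model back to \(M\) across flops is the pseudoautomorphism problem again, and you acknowledge you cannot do it.

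Worse, the intermediate statement you propose to prove by induction is false, so no repair of the inductive step can save this framework. By Hironaka's elimination of indeterminacy, \emph{every} terminal projective threefold birational to \(M\) receives a birational morphism from some smooth blow-up of \(M\); so with \(M = \P^3\) your class contains the Oguiso--Truong rational threefold of Example~\ref{otrational}, which carries primitive positive entropy automorphisms (primitivity being iterate-stable under the convention both you and the paper adopt). Indeed, the paper's remark following that example draws exactly this conclusion from Theorem~\ref{blowupautos}: that threefold is \emph{not} a smooth blow-up of \(\P^3\). The theorem lives on the distinction between ``is a smooth blow-up of \(M\)'' and ``is dominated by a smooth blow-up of \(M\),'' and your inductive class erases it. The paper's actual proof never leaves the smooth category and never invokes the cone theorem: it inducts on the number of blow-ups, taking as its divisorial contraction not an arbitrary MMP step but the last blow-up \(\pi : X \to Y\) in the given sequence, whose target is again a smooth blow-up of \(M\). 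If the exceptional divisor \(E\) is \(\phi\)-periodic, an iterate of \(\phi\) descends to a positive entropy automorphism of \(Y\) (Lemmas~\ref{autosdescend} and~\ref{periodicexceptional}) and induction applies, the hypothesis on \(M\) serving as the vacuous base case; if \(E\) has infinite orbit, \(\pi\) must be a curve blow-up by Lemma~\ref{notpoint} and \(\phi\) is imprimitive by Lemma~\ref{imprimitive}. Since a nontrivial smooth blow-up never has nef canonical class, neither singular models nor your \(K \equiv 0\) case ever arises.
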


This provides a partial answer to a question of Bedford:
\begin{question}[Bedford, cf.\ \cite{truong}]
\label{bedfordquestion}
Does there exist a smooth blow-up of \(\P^3\) admitting a positive entropy automorphism?
\end{question}

According to Theorem~\ref{blowupautos}, if such an automorphism exists, it must be imprimitive.  Truong has also obtained many results on this question, showing that if \(X\) is constructed by a sequence of blow-ups of points and curves whose normal bundles satisfy certain constraints, then \(X\) admits no positive entropy automorphisms, and that under certain weaker conditions, any positive entropy automorphism has equal first and second dynamical degrees~\cite{truong},\cite{moretruong}.

Theorem~\ref{blowupautos} applies to only a fairly narrow class of threefolds: whereas every smooth projective surface can be obtained as the blow-up of a minimal surface, this is far from true for threefolds.  Although the sharpest results we obtain are in this blow-up setting, in combination with classification results from the MMP, the same approach yields some constraints on positive entropy automorphisms of arbitrary smooth threefolds for which the canonical class is not numerically trivial.

\begin{restatable}{theorem}{thmmain}
\label{main}
Suppose that \(X\) is a smooth projective threefold and that \(\phi : X \to X\) is an automorphism of positive entropy.  After replacing \(\phi\) by some iterate, at least one of the following must hold:
\begin{enumerate}
\item the canonical class of \(X\) is numerically trivial;
\item \(\phi\) is imprimitive;
\item \(\phi\) is not dynamically minimal.
\end{enumerate}
\end{restatable}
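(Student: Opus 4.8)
The plan is to run the minimal model program on $X$, exploiting the one structural feature an automorphism always has: it preserves the canonical class, so $\phi^\ast K_X = K_X$, and therefore $\phi_\ast$ permutes the $K_X$-negative extremal rays of $\NEb(X)$ (it preserves the sign of $K_X \cdot C$). The overall strategy is to produce either a $\phi$-invariant fibration or a $\phi$-periodic extremal ray, and then to project or contract equivariantly; the three alternatives in the statement will correspond exactly to the possible outcomes of such a contraction. Throughout I will freely replace $\phi$ by an iterate.

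I would first dispose of the case in which $K_X$ is nef. By abundance for threefolds $K_X$ is then semiample, so the Iitaka fibration $\pi : X \to Z$ is canonically attached to $X$ and hence automatically $\phi$-equivariant, giving $\psi : Z \to Z$ with $\pi\circ\phi = \psi\circ\pi$. If $\kappa(X)=3$ then $X$ is of general type, $\Aut(X)$ is finite, and $\phi$ cannot have positive entropy, so this case is vacuous. If $\kappa(X)\in\{1,2\}$ then $1\le\dim Z<3$ and $\phi$ is imprimitive, giving (2). If $\kappa(X)=0$ then $\nu(X)=0$, so $K_X\equiv 0$, giving (1); and nefness forces $\kappa(X)\ge 0$, so these exhaust the nef case.

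Next suppose $K_X$ is not nef, so a $K_X$-negative extremal ray exists. Because $X$ is smooth, Mori's classification of extremal contractions applies, and every such contraction is either of fiber type or divisorial — crucially, \emph{no flipping contractions occur}. Assume for the moment that we can arrange, after passing to an iterate, a $\phi$-invariant $K_X$-negative extremal ray $R$. If $R$ is of fiber type, its contraction $\pi : X \to Z$ is $\phi$-equivariant with $0\le\dim Z<3$; the case $\dim Z = 0$ forces $\rho(X)=1$, where $\phi^\ast$ acts on the one-dimensional $N^1(X)$ with eigenvalue $1$ and so cannot have positive entropy, leaving $1\le\dim Z$ and hence imprimitivity, which is (2). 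If $R$ is divisorial, the contraction $\pi : X \to Y$ to a terminal threefold $Y$ is $\phi$-equivariant, $\phi$ descends to $\psi : Y \to Y$, and $\phi$ is not dynamically minimal, which is (3).

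Everything thus reduces to producing a $\phi$-periodic $K_X$-negative extremal ray, and this is the main obstacle. The $K_X$-negative extremal rays may be infinite in number, and by the Cone Theorem they can accumulate only along $K_X^\perp$. The positive-entropy eigenclasses locate this accumulation precisely: writing $\theta^+$ for the nef class with $\phi^\ast\theta^+=\lambda\theta^+$, $\lambda>1$, and $\theta^-$ for the nef eigenclass with eigenvalue $\lambda^{-1}$, one computes $(\theta^+)^3=0$ and $K_X\cdot(\theta^+)^2=0$, while the dual curve-class eigenvector $\eta^+\in\NEb(X)$ for the largest eigenvalue lies on $K_X^\perp$; since the generators along any orbit of extremal rays all carry the same bounded $K_X$-degree, such an orbit converges projectively to $\eta^+$. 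The difficulty is to rule out an infinite orbit escaping toward $K_X^\perp$ with no periodic member. I expect to control this through the $\phi_\ast$-invariant pairing $h(C)=(\theta^+\cdot C)(\theta^-\cdot C)\ge 0$ in combination with the boundedness of lengths of extremal rays, extracting a ray on which $h$ is minimized whose orbit is then forced to be finite; the absence of flipping contractions on a smooth $X$ is what makes such an argument feasible. That this genuinely breaks down once flips are permitted is exactly the point of the terminal example announced in the abstract, where infinitely many $K_X$-negative extremal rays with no finite orbit obstruct the construction.
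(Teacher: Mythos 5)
Your handling of the nef case and your equivariant contraction of a periodic ray (once one is available) both match the paper's Lemmas~\ref{nefcanonical}, \ref{morifiberspace}, and~\ref{periodicexceptional}. But the reduction of everything else to ``produce a $\phi$-periodic $K_X$-negative extremal ray'' is exactly where the argument breaks, and the paper explicitly cautions (remark following Lemma~\ref{periodicexceptional}) that when $\phi$ has infinite order such an invariant ray need not exist. Your sketch for extracting one does not close this gap. First, the pairing $h(C) = (\theta^+\cdot C)(\theta^-\cdot C)$ is not $\phi_\ast$-invariant: since $D\cdot \phi_\ast C = \phi^\ast D \cdot C$, and since the nef eigenclass of $(\phi^{-1})^\ast$ has eigenvalue $\lambda_1(\phi^{-1}) = \lambda_2(\phi)$ (not $\lambda_1(\phi)$), one computes $h(\phi_\ast C) = \bigl(\lambda_1(\phi)/\lambda_2(\phi)\bigr)\,h(C)$; for threefold automorphisms $\lambda_1 \neq \lambda_2$ in general, so $h$ is only invariant in the special case of equal dynamical degrees (the equality $\lambda_1(\phi)=\lambda_1(\phi^{-1})$ that makes this work for surfaces is what fails here). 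Second, even granting an invariant nonnegative function on rays, an infimum over infinitely many extremal rays accumulating on $K_X^\perp$ need not be attained and may well be $0$, and nothing forces a minimizing ray to have finite orbit. So the last paragraph of your proposal is a hope rather than a proof, and it is the entire difficulty.

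What the paper does is designed precisely to avoid ever producing a periodic ray in the hard case. The non-nef case splits in three: (i) if some ray is of fiber type, Wi\'sniewski's bound in Theorem~\ref{conetheorem} (at most three such rays) makes one of them periodic, and one argues as you do; (ii) if some divisorial contraction has $\phi$-periodic exceptional divisor $E$, an iterate of $\phi$ descends, giving conclusion (3); (iii) in the remaining case --- every contraction is divisorial with exceptional divisor of infinite orbit (Condition (A)) --- the paper proves imprimitivity \emph{directly}, with no periodic ray anywhere: the contraction must be the blow-up of a smooth curve (Lemma~\ref{notpoint}); Truong's nef eigenvector $D$ with $D^2=0$ restricts to the ruled surface $E$ so as to force each class $[\phi^n(E)\vert_E]$ onto an extremal ray of $\NEb(E)$ (Lemmas~\ref{rationality} and~\ref{exceptionalintersections}); the local-dynamics Theorem~\ref{separateiterates} is used to remove $\phi$-periodic rigid curves by equivariant blow-ups (Lemma~\ref{killperiodiccurves}); and a Hilbert-scheme argument (Lemma~\ref{imprimitive}) converts the resulting moving curves into a $\phi$-equivariant fibration $X \rat \Hilb_{[\xi]}(X)_{\red}$ over a surface. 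That machinery --- Sections 4 through 6 of the paper --- is the content your final paragraph would have to replace, and no variant of a ``minimize an invariant functional over the rays'' argument is known to substitute for it.
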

The conclusions in all these cases can be refined considerably; a more detailed statement appears as Theorem~\ref{precisemain} below.  In Section~\ref{examplelist}, we catalog some typical instances of each case.  

We caution that Theorem~\ref{main} should not be construed as a classification of threefolds admitting a primitive automorphism of positive entropy.  The chief difficulty lies in case (3): when \(\phi\) is not dynamically minimal, the new variety \(Y\) on which \(\phi\) induces an automorphism may no longer be smooth, so the result can not be applied inductively.  This leads to the following.

\begin{restatable}{corollary}{thminduction}
\label{induction}
Suppose that \(\phi : X \to X\) is a primitive, positive entropy automorphism of a smooth, projective, rationally connected threefold.  Then there exists a non-smooth threefold \(Y\) with terminal singularities and a birational map \(\pi : X \to Y\) such that some iterate of \(\phi\) descends to an automorphism of \(Y\).
\end{restatable}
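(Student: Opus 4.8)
The plan is to deduce the corollary from Theorem~\ref{main} by ruling out its first two alternatives and then analyzing the birational morphism produced by the third. First I would replace \(\phi\) by the iterate supplied by Theorem~\ref{main}, so that one of the conditions (1)--(3) holds. Condition (1) is immediately excluded: a rationally connected smooth projective variety is uniruled, so \(K_X\) is not pseudoeffective, and since a numerically trivial class is pseudoeffective this gives \(K_X \not\equiv 0\). Condition (2) is excluded using the hypothesis that \(\phi\) is primitive, together with the fact that primitivity is invariant under birational conjugation and---crucially for positive entropy maps---is inherited by the iterate produced by Theorem~\ref{main}; this last point is the one genuinely delicate input, and I return to it below. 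Hence condition (3) must hold: there is a nontrivial birational morphism \(\pi : X \to Y\) to a terminal threefold, with some iterate of \(\phi\) descending to an automorphism \(\psi\) of \(Y\).

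It then remains to guarantee that \(Y\) can be taken non-smooth, and here I would exploit a dichotomy for the contraction \(\pi\). Since \(X\) is smooth it is locally factorial, so by purity of the exceptional locus any nontrivial birational morphism out of \(X\) whose target is smooth must contract a divisor; equivalently, a small contraction from \(X\) necessarily has singular target. Thus one of two things happens. Either \(\pi\) is small, in which case \(Y\) is not factorial and in particular not smooth, and we are done; or \(\pi\) contracts a divisor, in which case \(\rho(Y) < \rho(X)\). In the divisorial case, if \(Y\) is already non-smooth we are again done; otherwise \(Y\) is a smooth rationally connected threefold carrying \(\psi\), an automorphism that is birationally conjugate to an iterate of \(\phi\) and so retains both positive entropy and primitivity, now with strictly smaller Picard number.

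I would then iterate this construction on the pair \((Y,\psi)\). Each step either terminates with a non-smooth terminal target or strictly decreases the Picard number while preserving smoothness, rational connectedness, positive entropy, and primitivity; as the Picard number is a positive integer, the process cannot continue indefinitely, and at the stage where it halts the contraction has no room left to lower \(\rho\), so it must be small and its target non-smooth. Composing the birational morphisms built along this finite tower yields the required birational morphism from \(X\) to a non-smooth terminal threefold \(Y\), and after passing to a single iterate of \(\phi\) that descends through the entire tower we obtain the induced automorphism of \(Y\). The main obstacle is the bookkeeping of the inductive step, and specifically the verification that primitivity survives the passage to iterates forced by each application of Theorem~\ref{main}: primitivity is not formally stable under iteration, since an imprimitive iterate only produces a cycle of fibrations permuted by \(\phi\) whose common intersection may be trivial and whose compositum may be everything. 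It is the positivity of the entropy---which rigidifies the possible invariant fibrations through the spectral structure of \(\phi^\ast\) on \(N^1(X)\)---that must be invoked to promote an invariant fibration of an iterate to one of \(\phi\) itself, and thereby to keep the descent running.
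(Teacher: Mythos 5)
Your proposal is essentially the paper's own proof: rule out conclusion (1) of Theorem~\ref{main} using rational connectedness, rule out (2) using primitivity, and then induct on the Picard number through the contractions supplied by (3), stopping when a singular target appears. The differences are minor and worth recording. Because you invoke Theorem~\ref{main} as a black box instead of running the smooth-threefold MMP directly (the paper takes an extremal contraction, which by Theorem~\ref{conetheorem} is a Mori fiber space or a divisorial contraction, and Theorem~\ref{precisemain}(3) even records that the terminal model is reached by a divisorial contraction), you must also dispose of the possibility of a small morphism; your purity/\(\Q\)-factoriality argument for that case is correct. The point you single out as delicate---promoting imprimitivity of an iterate of \(\phi\) to imprimitivity of \(\phi\) itself---is genuinely needed, but it is exactly as unaddressed in the paper as in your proposal: Lemmas~\ref{morifiberspace} and~\ref{imprimitive} conclude imprimitivity only after replacing \(\phi\) by an iterate, and the paper's proof of Corollary~\ref{induction} cites them against the primitivity of \(\phi\) without comment, so your write-up is no less complete than the paper's on this point (though your suggested repair via the spectral structure of \(\phi^\ast\) is likewise not carried out). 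Finally, your closing claim that at the halting stage the contraction ``has no room left to lower \(\rho\), so it must be small'' is false but harmless: the process halts precisely when a target is non-smooth, which your own dichotomy already guarantees, and in the paper's analysis that halting step is a divisorial contraction of type (E3), (E4), or (E5) with singular target, not a small contraction.
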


Experience with the MMP suggests that it is unsurprising that even in studying automorphisms of smooth threefolds, it is useful to consider threefolds with terminal singularities.  The unexpected feature of Corollary~\ref{induction} is that such singularities on \(Y\) are not only allowed, but unavoidable.

Results of Zhang show that if \(X\) admits a primitive, positive entropy automorphism, it must be either rationally connected or birational to a variety with numerically trivial canonical class~\cite{dqzhangmmp}.  Our results are primarily of interest in the rationally connected setting, and are in some sense complementary to those of Zhang: although we obtain no new information on the birational type of \(X\), we give some constraints on the geometry of a birational model on which \(\phi\) acts as an automorphism.  For example, we obtain the following corollary, which was previously shown by Bedford and Kim to be false in dimension \(2\) (see Example~\ref{bedfordkimnocurves}).

\begin{restatable}{corollary}{thminvariantstuff}
\label{invariantstuff}
Suppose that \(\phi : X \to X\) is a primitive, positive entropy automorphism of a smooth projective threefold.  If \(K_X\) is not numerically trivial, then there exists a \(\phi\)-invariant divisor on \(X\).
\end{restatable}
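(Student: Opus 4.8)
The plan is to run the minimal model program on $X$ in a $\phi$-equivariant fashion and to locate the invariant divisor among the divisors contracted by an equivariant extremal contraction. This is essentially the divisorial instance of case (3) of Theorem~\ref{main}, and the whole point is that, once $\phi$ is primitive and $K_X \not\equiv 0$, the relevant contraction is forced to be divisorial rather than of fiber type. The starting observation is that automorphisms fix the canonical class, so $\phi^\ast K_X = K_X$; hence $\phi_\ast$ preserves the half-space $\{K_X < 0\}$ in $N_1(X)$ and permutes the $K_X$-negative extremal rays of $\NEb(X)$.

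First I would dispose of the case that $K_X$ is nef. By abundance for threefolds $K_X$ is then semiample; let $f \colon X \to Z$ be the associated Iitaka fibration, so $\dim Z = \kappa(X)$. If $\kappa(X) = 3$ then $X$ is of general type, so $\Aut(X)$ is finite and cannot contain the infinite-order map $\phi$; if $\kappa(X) = 0$ then $K_X \equiv 0$, contrary to hypothesis; and if $0 < \kappa(X) < 3$, then, since the Iitaka fibration is canonical and therefore $\phi$-equivariant, $f$ exhibits $\phi$ as imprimitive in the sense of Definition~\ref{primitivedef}, contradicting primitivity. Hence $K_X$ is not nef, and $\NEb(X)$ carries at least one $K_X$-negative extremal ray.

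The key step is that a smooth threefold carries only finitely many $K_X$-negative extremal rays, so that $\phi_\ast$ permutes this finite set and, being an isomorphism, preserves the Mori-theoretic type of each ray. By Mori's classification of extremal contractions on a smooth threefold there are no small (flipping) contractions: every $K_X$-negative extremal ray is either of fiber type or divisorial. I would then split into two cases. If some $K_X$-negative extremal ray is divisorial, let $E$ be the union of the irreducible exceptional divisors of the divisorial contractions attached to all such rays; this is a nonempty reduced divisor, and since $\phi$ permutes the rays and carries the contraction of a ray to the contraction of its image, it permutes these divisors, whence $\phi(E) = E$ and $E$ is the desired invariant divisor (with no passage to an iterate needed). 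If instead every $K_X$-negative extremal ray is of fiber type, I would assemble the finitely many resulting Mori fiber space structures $f_j \colon X \to Y_j$ into the product morphism $X \to \prod_j Y_j$, take the image $W$, and obtain a $\phi$-equivariant dominant morphism $X \to W$; checking that $\dim W < 3$ then shows $\phi$ is imprimitive, a contradiction. Thus the divisorial case must occur, and the corollary follows.

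I expect the main obstacle to be the finiteness of $K_X$-negative extremal rays on the smooth threefold $X$. This is exactly the point at which smoothness is indispensable: the terminal threefold advertised in the abstract shows that finiteness can fail once singularities are admitted, and without it the $\phi$-orbit of a ray need not assemble into a contractible face. A secondary difficulty is verifying, in the all-fiber-type case, that the image $W$ has dimension strictly less than three; here I would use that the general fibers of the $f_j$ are rational and that $\phi$ merely permutes these finitely many fibrations, together with the structural constraints on primitive positive-entropy automorphisms supplied by the results of Zhang~\cite{dqzhangmmp}, to exclude the possibility that the $f_j$ are jointly generically finite onto a threefold.
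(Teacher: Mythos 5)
Your reduction of the nef case is fine and matches the paper (Lemma~\ref{nefcanonical}), but the ``key step'' of your argument --- that a smooth projective threefold has only finitely many \(K_X\)-negative extremal rays, so that \(\phi_\ast\) permutes a finite set and every orbit of rays is finite --- is false, and it is false precisely in the situation this corollary is about. What is finite is only the set of rays of \emph{fiber type}: that is Wi\'sniewski's observation, recorded as the last claim of Theorem~\ref{conetheorem}. The set of \emph{divisorial} \(K_X\)-negative rays on a smooth uniruled threefold can be infinite. For instance, if \(S\) is the blow-up of \(\P^2\) at nine or more very general points, then \(S \times \P^1\) is a smooth threefold carrying infinitely many divisorial \(K\)-negative rays, one for each \((-1)\)-curve of \(S\). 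Worse, in the setting of the corollary the infinitude is exactly the enemy: if the exceptional divisor \(E\) of a divisorial contraction is \emph{not} \(\phi\)-periodic, then the rays \((\phi^\ast)^n(R)\) are pairwise distinct divisorial \(K_X\)-negative extremal rays, so there are infinitely many of them, your proposed divisor \(\bigcup \phi^n(E)\) (the union over \emph{all} divisorial rays) is a Zariski-dense countable union rather than a divisor, and there need not exist any \(\phi^\ast\)-invariant \(K_X\)-negative ray at all (see the remark following Lemma~\ref{periodicexceptional}). So your argument assumes away the only hard case; it begs the question rather than proving the corollary.

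You have also misdiagnosed where smoothness enters. The terminal example of Theorem~\ref{manyflips} concerns non-negative Kodaira dimension, where finiteness of \(K\)-negative rays \emph{does} hold for smooth threefolds (all such contractions are divisorial and the contractible divisors lie in the stable base locus of \(\abs{K_X}\)); finiteness fails already for smooth \emph{uniruled} threefolds, which is the relevant regime here since \(K_X\) is not nef. The paper's actual proof runs: \(K_X\) is not nef (Lemma~\ref{nefcanonical}); the first MMP contraction is not a Mori fiber space since \(\phi\) is primitive (Lemma~\ref{morifiberspace}); hence it is divisorial (no flips on a smooth threefold); and then --- this is the crux --- if \(E\) were not \(\phi\)-periodic, then \(\pi(E)\) is a curve (Lemma~\ref{notpoint}) and \(\phi\) would be \emph{imprimitive} by Lemma~\ref{imprimitive}, whose proof occupies the technical heart of the paper (the nef eigenvector with \(D^2=0\), the ruled-surface analysis, the local dynamics of Theorem~\ref{separateiterates}, and the Hilbert-scheme fibration). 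Primitivity therefore forces \(E\) to be \(\phi\)-periodic, and the \emph{finite} union \(\bigcup_{n} \phi^n(E)\) over the period is the invariant divisor. Any correct proof has to confront the non-periodic exceptional divisor; there is no shortcut through finiteness of rays.
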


A shortcoming of Corollary~\ref{induction} is that it fails to give any further information about the automorphism of the singular model \(Y\).  The essential problem is that running the MMP on \(Y\) might require performing a flip \(Y \rat Y^+\).  If the flipping curve has infinite orbit under \(\phi\), the induced map on \(Y^+\) will be only a pseudoautomorphism.  In Section~\ref{singularcase}, we illustrate the difficulty in the case of an example of Oguiso and Truong.  The flipping curve in this instance has infinite orbit under \(\phi\), and our methods do not apply.  However, passing to a suitable branched cover, we obtain:
\begin{restatable*}{theorem}{thmmanyflips}
\label{manyflips}
There exists a terminal, projective threefold \(Y\) of non-negative Kodaira dimension with infinitely many \(K_Y\)-negative extremal rays on \(\NEb(Y)\).
\end{restatable*}
This provides a new negative answer to a question of Kawamata, Matsuda, and Matsuki:
\begin{question}[{\cite[Problem 4-2-5]{kmm}}]
\label{kmmquestion}
Suppose that \(X\) is a terminal variety.  According to the cone theorem,
\[
\NEb(X) = \NEb_{K_X \geq 0}(X) + \sum_i \Ra [C_i].
\]
If \(\kappa(X) \geq 0\), must the number of \(K_X\)-negative extremal rays be finite?
\end{question}
The standard example of a variety with infinitely many \(K_X\)-negative extremal rays is the blow-up of \(\P^2\) at \(9\) or more very general points, when the infinitely many \((-1)\)-curves on \(X\) generate such rays.  The restriction that \(\kappa(X) \geq 0\) excludes any simple variations on this example.  A negative answer to Question~\ref{kmmquestion} where \((X,\Delta)\) is a klt pair was noted by Uehara, answering \cite[Problem 4-2-5]{kmm} in its original formulation~\cite{uehara}.  The example of Theorem~\ref{manyflips} seems to be the first with \(\Delta = 0\) (cf.\ \cite{uehara}, \cite[Remark III.1.2.5.1]{kollarrationalcurves}).

The proof of Theorem~\ref{main} makes use of the interplay between the global geometry of \(X\), as governed by the MMP, and the local dynamics of the automorphism \(\phi : X \to X\) around certain invariant subvarieties.  A key technical input is the following observation about the local dynamics of an automorphism of a threefold around an invariant curve, roughly extending a result of Arnold from the surface case.  We hope that this may be of some independent interest.

\begin{restatable*}{theorem}{thmseparateiterates}
\label{separateiterates}
Suppose that \(X\) is a smooth, projective threefold with an infinite order automorphism \(\phi : X \to X\).  Let \(C\) be an irreducible curve with \(\phi(C) = C\).  Suppose that \(E \subset X\) is an irreducible divisor, containing \(C\) and nonsingular at the generic point of \(C\), and which is not \(\phi\)-periodic.  Then there exists a smooth, projective threefold \(Y\) with a  birational morphism \(\pi : Y \to X\) such that, after replacing \(\phi\) by an iterate:
\begin{enumerate}
\item The map \(\phi\) lifts to an automorphism of \(Y\);
\item \(\pi : Y \setminus \pi^{-1}(C) \to X \setminus C\) is an isomorphism;
\item \(\pi(E_m \cap E_n)\) does not contain \(C\) for any \(m \neq n\).
\end{enumerate}
\end{restatable*}

The following theorem provides a more detailed breakdown of the various subcases in the statement of Theorem~\ref{main}. 

\begin{theorem}
\label{precisemain}
Suppose that \(X\) is a smooth projective threefold and that \(\phi : X \to X\) is an automorphism of positive entropy.  After replacing \(\phi\) by some iterate, at least one of the following must hold:
\begin{enumerate}
\item the canonical class \(K_X\) is numerically trivial and either:
\begin{enumerate}
 \item \(X\) is an abelian threefold;
 \item \(X\) is a weak Calabi-Yau variety: \(K_X\) is torsion in \(\Pic(X)\) and \(h^{0,1}(X) = 0\);
\end{enumerate}
\item \(\phi\) is imprimitive and either:
\begin{enumerate}
\item the canonical class \(K_X\) is semiample and the canonical fibration \(\pi : X \to X_{\mathrm{can}}\) realizes \(\phi\) as imprimitive;
\item there exists a conic bundle \(\pi : X \to V\) with  \(\rho(X/V) = 1\) realizing \(\phi\) as imprimitive;
\item there exists a surface \(S\) with an automorphism \(\psi : S \to S\), a birational morphism \(\pi : X^\prime \to X\) such that \(\phi\) lifts to an automorphism \(\bar{\phi} : X^\prime \to X^\prime\), and a morphism \(\rho : X^\prime \to S\) with all fibers \(1\)-dimensional such that \(\rho \circ \bar{\phi} = \psi \circ \rho\).
\end{enumerate}
\item \(\phi\) is not dynamically minimal: there exists a divisorial contraction \(\pi : X \to Y\), where \(Y\) has terminal singularities, and \(\phi\) descends to an automorphism \(\psi : Y \to Y\).  Either:
\begin{enumerate}
\item \(Y\) is smooth;
\item the unique singularity of \(Y\) is locally analytically isomorphic to \(w^2+x^2+y^2+z^2 = 0\), \(w^2+x^2+y^2+z^3 = 0\), or the cone over the Veronese surface in \(\P^5\).
\end{enumerate}
\end{enumerate}
\end{theorem}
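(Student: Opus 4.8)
The plan is to run a $\phi$-equivariant minimal model program on $X$, organising the proof according to the behaviour of the canonical class and of the induced $\phi$-action on the $K_X$-negative extremal rays of $\NEb(X)$. Since every automorphism fixes the canonical class we have $\phi^\ast K_X = K_X$, so $\phi$ preserves $\NEb(X)$ together with the subcone on which $K_X$ is negative. I would first dispose of the case that $K_X$ is nef: by abundance for threefolds $K_X$ is then semiample and defines the canonical fibration $\pi : X \to X_{\mathrm{can}}$, which is $\phi$-equivariant by functoriality. The case $\dim X_{\mathrm{can}} = 3$ cannot occur, since then $X$ is of general type and $\Aut(X)$ is finite, contradicting positive entropy; if $0 < \dim X_{\mathrm{can}} < 3$ the fibration exhibits $\phi$ as imprimitive, giving case~(2a). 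If $\dim X_{\mathrm{can}} = 0$ then $K_X \equiv 0$, and I would analyse the equivariant Albanese map $\Alb : X \to A$: for $\dim A = 3$ the variety $X$ is abelian, giving~(1a); for $\dim A = 0$, i.e. $h^{0,1}(X) = 0$, the torsionness of $K_X$ yields the weak Calabi--Yau case~(1b); and for $0 < \dim A < 3$ either the Albanese or, after passing to a suitable model, a complementary fibration realises $\phi$ as imprimitive over a surface with one-dimensional fibres, i.e. case~(2c).

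Now suppose $K_X$ is not nef, so that $\NEb(X)$ carries at least one $K_X$-negative extremal ray; since $X$ is \emph{smooth}, Mori's theorem ensures that every such contraction is of fiber type or divisorial, with no small contractions. The map $\phi$ permutes these rays, and if some ray $R$ is $\phi$-periodic I would replace $\phi$ by an iterate fixing $R$ and contract it equivariantly. If the contraction $\pi : X \to V$ is of fiber type, then because $\rho(X/V) = 1$ and $\phi^\ast$ fixes $-K_X$ modulo $\pi^\ast N^1(V)$, the eigenvalue of $\phi^\ast$ exceeding $1$ must descend to $N^1(V)$; this forces $\dim V = 2$, excluding both the Fano case $\dim V = 0$ and the del Pezzo fibration $\dim V = 1$, and identifies the contraction as the conic bundle of case~(2b).

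If instead the periodic ray $R$ gives a divisorial contraction $\pi : X \to Y$, then $Y$ is terminal and $\phi$ descends to an automorphism of $Y$, so $\phi$ is not dynamically minimal and we are in case~(3). To pin down the singularities I would invoke Mori's explicit classification of divisorial contractions of a smooth threefold: the target $Y$ is smooth when the exceptional divisor $E$ is contracted to a smooth point or blown down to a smooth curve, giving~(3a), and otherwise the image point carries exactly one of the three analytic singularity types listed in~(3b), according to whether $E$ is a smooth quadric, a quadric cone, or a Veronese-embedded $\P^2$.

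The remaining and hardest situation is that \emph{no} $K_X$-negative extremal ray is $\phi$-periodic, so that the rays, and the exceptional divisors $E, \phi(E), \phi^2(E), \dots$ of the divisorial ones, form infinite $\phi$-orbits. Here the global program offers no invariant contraction, and because a projective birational morphism can contract only finitely many divisors one cannot simply contract the whole orbit. My plan is to descend to the local dynamics: after extracting a $\phi$-invariant curve $C$ from the loci swept out by the orbit, I would apply Theorem~\ref{separateiterates} to lift $\phi$ to a smooth model on which the translates $\phi^m(E)$ no longer meet along $C$, and then argue that the resulting infinitely many, now separated, ruling families must assemble into a single $\phi$-invariant fibration over a surface $S$ with one-dimensional fibres --- precisely case~(2c), with the positive entropy carried by an automorphism $\psi$ of $S$. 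Showing that an infinite, non-periodic orbit of extremal rays is incompatible with primitivity, and instead yields such an invariant surface fibration, is the technical heart of the argument and the step I expect to require the most care.
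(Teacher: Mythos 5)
Your reductions in the easy cases track the paper's own proof: the nef case is Zhang's argument (abundance, exclusion of general type, the Albanese map), the fiber-type case descends the entropy to force a conic bundle over a surface, and the periodic divisorial case descends via Mori's classification to give 3(a)/3(b). One small caveat there: your dichotomy ``some $K_X$-negative ray is $\phi$-periodic / none is'' implicitly needs Wi\'sniewski's finiteness of fiber-type rays (the last claim of Theorem~\ref{conetheorem}) to guarantee that in the non-periodic branch every extremal ray is divisorial; without it, a fiber-type ray with infinite orbit would fall outside both of your branches.

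The genuine gap is the non-periodic divisorial case, which is the bulk of the theorem and which your proposal names as ``the technical heart'' without supplying the ideas that make it work. Two things are missing. First, you never show that an exceptional divisor $E$ with infinite orbit must be contracted to a curve rather than a point (Lemma~\ref{notpoint}); this is what puts the contraction in type (E1) and makes $E$ a smooth ruled surface, the geometry everything else rests on. Second, and more seriously, your sketch never uses positive entropy in this case. The paper's mechanism is Truong's nef eigenvector: $D$ nef with $\phi^\ast D = \lambda D$, $D^2 = 0$, and $\lambda$ irrational (Lemma~\ref{numdim}), combined with the rationality trick that $(D \cdot E \cdot E_n)_X$ is simultaneously rational and $\lambda^n$ times a rational number, hence zero (Lemma~\ref{rationality}); this is what forces every restriction $[E_n\vert_E]$ onto the extremal ray $\Ras \alpha$ of $\NEb(E)$ (Lemma~\ref{exceptionalintersections}) and thereby constrains the intersections $E_m \cap E_n$ at all. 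From there the argument splits along the rigid/covering dichotomy for curves on $\Ras\alpha$: Theorem~\ref{separateiterates} is used (via Lemma~\ref{killperiodiccurves}) to eliminate $\phi$-periodic rigid curves, so that covering curves must appear in the intersections (Lemma~\ref{ximovesalot}); and then the fibration of case 2(c) is produced not by ``assembling ruling families'' but by a Hilbert-scheme argument (Lemma~\ref{imprimitive}), in which one computes $\dim \Hilb_{[\xi]}(X) = 2$ from the normal bundle sequence and proves that the universal family maps \emph{birationally} to $X$ --- a step that needs the negativity of $E_n$ on its fibers and is genuinely not automatic (for lines on a cubic threefold the analogous map is $6$-to-$1$). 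Your plan conflates the two branches (separate along an invariant curve first, then assemble a fibration) and, lacking the eigenvector argument, has no way even to begin controlling $E_m \cap E_n$; so as written the core of the theorem remains unproven.
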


We next outline the strategy of the proof of Theorem~\ref{precisemain}.  There are three main steps.

\medskip
\noindent\emph{Step 1: Initial reductions from the MMP}

First we show that it is possible to make several simplifying assumptions on \(X\). If \(K_X\) is nef, the arguments of Zhang show that \(X\) satisfies one of Case (1) or 2(a).   If \(K_X\) is not nef, we consider the first step of the MMP for \(X\). There exists a contraction of a \(K_X\)-negative extremal ray, and the proof breaks into three cases:
\begin{enumerate}
\item there is a Mori fiber space \(\pi : X \to Y\);
\item there is a divisorial contraction \(\pi : X \to Y\), and the exceptional divisor \(E\) is \(\phi\)-periodic;
\item there is a divisorial contraction \(\pi : X \to Y\), and the exceptional divisor \(E\) is not \(\phi\)-periodic.
\end{enumerate}
Since \(X\) is a smooth threefold, there are no flipping contractions.  If \(X\) is constructed as a smooth blow-up, as in Theorem~\ref{blowupautos}, we may assume that we are in Case (2) or (3).  The divisorial contraction \(\pi : X \to Y\) is just the final blow-up map, with exceptional divisor \(E\). 

In Case (1), it follows from a lemma of Wi\'sniewski that some iterate of \(\phi\) is imprimitive.  In Case (2), we replace \(\phi\) by an iterate \(\phi^n\) fixing \(E\).  Perhaps after once more replacing \(\phi\) with \(\phi^2\) (to handle the case that \(E \cong \P^1 \times \P^1\) and \(\phi\) exchanges the rulings), \(\phi\) then descends to an automorphism of \(Y\).  This shows that \(\phi\) is not dynamically minimal.

The bulk of the work is in the remaining Case (3): we must show that if the exceptional divisor \(E\) is not \(\phi\)-periodic, then \(\phi\) is necessarily imprimitive. The argument hinges on the fact that the exceptional divisor \(E\) must be a smooth ruled surface (i.e.\ a \(\P^1\)-bundle over a curve), and the geometry of such surfaces is fairly simple. The presence of an infinite set of contractible, ruled surfaces \(\phi^n(E) \subset X\) has strong geometric implications.

\medskip
\noindent\emph{Step 2: Numerical consequences of positive entropy}

The next step is to translate the condition of positive entropy into a form that can be used to give geometric conclusions.  The starting point is an observation of Truong~\cite{truong}: there is (perhaps after replacing \(\phi\) with \(\phi^{-1}\)) a dominant eigenvector \(D\) of the pull-back \(\phi^\ast : N^1(X) \to N^1(X)\), such that \(D\) is a nef divisor with \(D^2 = 0\).  We will exploit the properties of this divisor to control the intersections \(\phi^m(E) \cap \phi^n(E)\) and ultimately show that \(\phi\) is imprimitive.

The exceptional divisor of the contraction \(\pi : X \to Y\) is a smooth ruled surface \(E \subset X\).  The restriction \(D \vert_E\) is a nef divisor with \((D\vert_E \cdot D\vert_E)_E = 0\); thus \(D\vert_E\) is not ample, and it lies on the boundary of the nef cone \(\Nef(E)\).  Because a ruled surface has Picard rank \(2\), there are only three such divisors, up to rescaling: the zero divisor, the class of a fiber, and a second boundary ray.  Moreover, after an appropriate rescaling, we can assume that \(D \vert_E\) is actually a rational class, even though \(D\) itself is not.  The cases in which \(D \vert_E\) is zero or equivalent to a fiber can be quickly excluded, so we may assume that \(D \vert_E\) is on the second boundary ray of \(\Nef(E)\).  An intersection-theoretic trick then shows that for all nonzero \(n\), the divisors \(\phi^n(E) \vert_E\) have numerical class proportional to \(\alpha\), a generator of one of the two extremal rays on the cone of curves \(\NEb(E)\).

\medskip
\noindent\emph{Step 3: From numerical data to an equivariant
fibration}

The geometry of the ruled surface \(E\) now enables us to draw some geometric conclusions, using the condition that \([\phi^n(E) \vert_E]\) is extremal on \(\NEb(E)\).  There are two possibilities, depending on whether the set of curves in \(E\) with numerical class on the extremal ray \(\Ras \alpha\) is finite or infinite. Both situations are possible: for example, if \(E \cong \F_n\) is a Hirzebruch surface with \(n \geq 1\), then \(\Ras \alpha\) is represented only by the negative section, while if \(E \cong \P^1 \times \P^1\), then \(\Ras \alpha\) is represented by a \(1\)-dimensional family of sections.

Suppose first that \(\Ras \alpha\) is represented by a one-dimensional algebraic family of curves, and that \(\phi^n(E) \cap E\) contains infinitely many different curves inside \(E\) as \(n\) varies.  In this case, we show that there exists a curve \(\xi \subset E\) that moves in algebraic families covering \(\phi^n(E)\) for infinitely many different values of \(n\).  A Hilbert scheme argument implies that \(\xi\) must in fact deform in a family of dimension at least \(2\), covering all of \(X\).   The map \(\phi\) sends curves in the deformation class of \(\xi\) to other curves in the deformation class of \(\xi\), and so \(\phi\) induces an automorphism of the space parametrizing such curves: this parameter space is a an irreducible component \(\Hilb_{[\xi]}(X)\) of the Hilbert scheme \(\Hilb(X)\).  

We next argue that \(\Hilb_{[\xi]}(X)\) is two-dimensional and that there exists a \(\phi\)-equivariant rational map \(X \rat \Hilb_{[\xi]}(X)\).  The essential point is that deformations of \(\xi\) ``exactly'' cover \(X\), in the sense that through a general point \(x\) on \(X\) there is a unique curve \(\xi^\prime\) deformation-equivalent to \(\xi\). The map \(X \rat \Hilb_{[\xi]}(X)\) then sends \(x\) to the point \([\xi^\prime]\) on the Hilbert scheme parametrizing this curve.  In other words, the corresponding component \(\Univ_{[\xi]}(X)\) of the universal family maps birationally to \(X\), and the composition \(X \rat \Univ_{[\xi]}(X) \to \Hilb_{[\xi]}(X)\)  presents \(\phi\) as an imprimitive map.

\begin{figure}[htb]
  \centering
\begin{tikzpicture}[scale=0.7]
\def\yoffset{2};
\draw (-3,-1/2) -- (3,1/2);
\draw (-1.5,1) -- (1.5,-1);
\draw plot [smooth, tension=1.3] coordinates {(-2,0.3) (-0,-1) (2,2)};

\draw [fill=gray!30, fill opacity=0.5] (-1.5,1) -- (-1.5,1+\yoffset) -- (1.5,-1+\yoffset) -- (1.5,-1) -- cycle;
\draw [fill=gray!70, fill opacity=0.5] (-3,-1/2) -- (-3,-1/2+\yoffset) -- (3,1/2+\yoffset) -- (3,1/2) -- cycle;
\draw [fill=gray!100, fill opacity=0.5] plot [smooth, tension=1.3] coordinates {(-2,0.3) (-0,-1) (2,2)} -- (2,2+\yoffset) -- plot [smooth, tension=1.3] coordinates {(2,2+\yoffset) (0,-1+\yoffset) (-2,0.3+\yoffset)} -- (-2,0.3);

\draw plot [smooth, tension=1.3] coordinates {(-2,0.3) (-0,-1) (2,2)} -- (2,2+\yoffset) -- plot [smooth, tension=1.3] coordinates {(2,2+\yoffset) (0,-1+\yoffset) (-2,0.3+\yoffset)} -- (-2,0.3);
\draw (-3,-1/2) -- (-3,-1/2+\yoffset) -- (3,1/2+\yoffset) -- (3,1/2) -- cycle;
\draw (-1.5,1) -- (-1.5,1+\yoffset) -- (1.5,-1+\yoffset) -- (1.5,-1) -- cycle;

\draw (0,0) -- (0,0+\yoffset);
\draw (1.27,0.22) -- (1.27,0.22+\yoffset);
\draw (-1.73,-0.29) -- (-1.73,-0.29+\yoffset);
\draw (0.73,-0.48) -- (0.73,-0.48+\yoffset);

\fill (0,0) circle [radius=1.2pt];
\fill (1.27,0.22) circle [radius=1.2pt];
\fill (-1.73,-0.29) circle [radius=1.2pt];
\fill (0.73,-0.48) circle [radius=1.2pt];
\fill (0,0+\yoffset) circle [radius=1.2pt];
\fill (1.27,0.22+\yoffset) circle [radius=1.2pt];
\fill (-1.73,-0.29+\yoffset) circle [radius=1.2pt];
\fill (0.73,-0.48+\yoffset) circle [radius=1.2pt];

\draw [very thick] (-0.59,-1.12) -- (-0.59,-1.12+\yoffset);
\draw [very thick] (-1.14,0.76) -- (-1.14,0.76+\yoffset);
\draw [very thick] (2.5,5/12) -- (2.5,5/12+\yoffset);

\def\arrowshift{0.12};
\def\run{0.3};
\def\slope{1/6};
\draw [<->] (2.5-\run,5/12-\slope*\run-\arrowshift) -- (2.5+\run,5/12+\slope*\run-\arrowshift);
\def\run{0.25};
\def\slope{-2/3};
\draw [<->] (-1.14-\run,0.76-\slope*\run+\arrowshift+\yoffset) -- (-1.14+\run,0.76+\slope*\run+\arrowshift+\yoffset);
\def\run{0.3};
\def\slope{0.15};
\draw [<->] plot [smooth, tension=0.6] coordinates {(-0.59-\run,-1.12+1.5*\slope*\run-\arrowshift) (-0.59,-1.12-\arrowshift) (-0.59+\run,-1.12+\slope*\run-\arrowshift)};

\def\fudge{0.1}
\node at (-2.6,-0.13+\yoffset+\fudge) {$E$};
\node at (-0.3+\fudge,0.6+\yoffset+2*\fudge) {$\phi(E)$};
\node at (1.4-3.6*\fudge,1.35+\yoffset+2*\fudge) {$\phi^2(E)$};
\node [below] at (2.5,5/12-\arrowshift) {$\xi$};
\node [above] at (-1.14,0.76+\yoffset+\arrowshift) {$\xi^\prime$};
\node [below] at (-0.59,-1.12-\arrowshift) {$\xi^{\prime\prime}$};

\def\downoffset{3.25}

\draw (-3,-1/2-\downoffset) -- (3,1/2-\downoffset);
\draw (-1.5,1-\downoffset) -- (1.5,-1-\downoffset);
\draw plot [smooth, tension=1.3] coordinates {(-2,0.3-\downoffset) (-0,-1-\downoffset) (2,2-\downoffset)};

\fill (0,0-\downoffset) circle [radius=1.2pt];
\fill (1.27,0.22-\downoffset) circle [radius=1.2pt];
\fill (-1.73,-0.29-\downoffset) circle [radius=1.2pt];
\fill (0.73,-0.48-\downoffset) circle [radius=1.2pt];

\fill (-0.59,-1.12-\downoffset) circle [radius=2pt];
\fill (-1.14,0.76-\downoffset) circle [radius=2pt];
\fill (2.5,5/12-\downoffset) circle [radius=2pt];

\node [above] at (0,1.7+\yoffset) {$X$};
\node [below] at (0,-1.3-\downoffset) {$\Hilb_{[\xi]}(X)$};
\node [right] at (0.2,-1/2-\downoffset/2) {$\rho$};
\draw [dashed,->] (0.2,-1.3)--(0.2,0.3-\downoffset);

\end{tikzpicture}
\caption{Case 1: Deformations of \(\xi\) determine a map to a surface}
\end{figure}

The second case is when the ray \(\Ras\alpha\) is represented by only finitely many curves. In this setting, the infinitely many contractible divisors \(\phi^n(E)\) must intersect only along a finite number of curves \(\nu_i \subset X\). We show in Section~\ref{localdynamics} that this is impossible.  The crux of the argument is a local dynamical result, given earlier as Theorem~\ref{separateiterates}.

Roughly speaking, we show that there is a uniform bound on the orders of tangency between the divisors \(\phi^m(E)\) and \(\phi^n(E)\) along each curve \(\nu_i\), independent of \(m\) and \(n\).  Then there exists a sequence of blow-ups \(\pi : Y \to X\) centered above the curves \(\nu_i\), such that the strict transforms of the infinitely many divisors \(\phi^n(E)\) all become disjoint on \(Y\).  But this is impossible: each of these divisors is negative on some curve contained in it, contradicting the finite-dimensionality of \(N^1(Y)\).

\begin{figure}[htb]
\label{erigidcurve}
  \centering
\begin{tikzpicture}[scale=0.7]

\def\splitwidth{5}
\def\yoffset{2};

\begin{scope}[shift={(-\splitwidth,0)}]

\def\slope{0.5};
\def\sheetoffsetx{1};
\def\sheetoffsety{\slope*\sheetoffsetx}

\def\overshoot{1}

\draw [fill=gray!130]
(2*\sheetoffsetx,2*\sheetoffsety) -- (2*\sheetoffsetx,2*\sheetoffsety+\yoffset) --
(2*\sheetoffsetx+\overshoot,2*\sheetoffsety+\overshoot*\slope+\yoffset) -- (2*\sheetoffsetx+\overshoot,2*\sheetoffsety+\overshoot*\slope) -- cycle;
\draw [fill=gray!30] (-2.5+2*\sheetoffsetx,0.6+2*\sheetoffsety) -- (-2.5+2*\sheetoffsetx,0.6+2*\sheetoffsety+\yoffset) -- (2.5+2*\sheetoffsetx,-0.6+2*\sheetoffsety+\yoffset) -- (2.5+2*\sheetoffsetx,-0.6+2*\sheetoffsety) -- cycle;
\draw [fill=gray!130]
(\sheetoffsetx,\sheetoffsety) -- (\sheetoffsetx,\sheetoffsety+\yoffset) -- (2*\sheetoffsetx,2*\sheetoffsety+\yoffset) -- (2*\sheetoffsetx,2*\sheetoffsety) -- 
cycle;
\draw [fill=gray!60] (-2.5+\sheetoffsetx,0.6+\sheetoffsety) -- (-2.5+\sheetoffsetx,0.6+\sheetoffsety+\yoffset) -- (2.5+\sheetoffsetx,-0.6+\sheetoffsety+\yoffset) -- (2.5+\sheetoffsetx,-0.6+\sheetoffsety) -- cycle;
\draw [fill=gray!130]
(0*\sheetoffsetx,0*\sheetoffsety) -- (0*\sheetoffsetx,0*\sheetoffsety+\yoffset) -- (1*\sheetoffsetx,1*\sheetoffsety+\yoffset) -- (\sheetoffsetx,\sheetoffsety) -- 
cycle;
\draw [fill=gray!90] (-2.5,0.6) -- (-2.5,0.6+\yoffset) -- (2.5,-0.6+\yoffset) -- (2.5,-0.6) -- cycle;
\draw [fill=gray!130] (0-\overshoot,0-\overshoot*\slope+\yoffset) -- (-\overshoot,-\overshoot*\slope) -- 
(0*\sheetoffsetx,0*\sheetoffsety) -- (0*\sheetoffsetx,0*\sheetoffsety+\yoffset) -- cycle;

\node [above left] at (-2.5,0.6+\yoffset) {$\overline{E}$};
\node [above] at (-2.5+\sheetoffsetx,0.6+\sheetoffsety+\yoffset) {$\overline{\phi(E)}$};
\node [above right] at (-2.5+2*\sheetoffsetx,0.6+2*\sheetoffsety+\yoffset) {$\overline{\phi^2(E)}$};
\node [above] at (-1.2+3*\sheetoffsetx,0.6+3*\sheetoffsety+\yoffset) {$\ldots$};

\end{scope}

\draw [->] (0,1.5) -- (1.5,1.5);
\node [above] at (0.75,1.5) {$\pi$};

\begin{scope}[shift={(\splitwidth,0.5)}]

\def\yoffset{2};

\draw [fill=gray!30] (-1.5,1) -- (-1.5,1+\yoffset) -- (0,\yoffset) -- (0,0) -- cycle;
\draw [fill=gray!60] (-2.5,0.6) -- (-2.5,0.6+\yoffset) -- (0,0+\yoffset) -- (0,0) -- cycle;
\draw [fill=gray!90] (-3,-1/2) -- (-3,-1/2+\yoffset) -- (0,0\yoffset) -- (0,0) -- cycle;
\draw [fill=gray!90] (3,1/2) -- (3,1/2+\yoffset) -- (0,0\yoffset) -- (0,0) -- cycle;
\draw [fill=gray!60] (2.5,-0.6) -- (2.5,-0.6+\yoffset) -- (0,0+\yoffset) -- (0,0) -- cycle;
\draw [fill=gray!30] (1.5,-1) -- (1.5,-1+\yoffset) -- (0,\yoffset) -- (0,0) -- cycle;

\draw [very thick] (0,0) -- (0,0+\yoffset);
\fill (0,0) circle [radius=1.2pt];
\fill (0,0+\yoffset) circle [radius=1.2pt];

\node [above] at (-3,-1/2+\yoffset) {$E$};
\node [above] at (-2.5,0.6+\yoffset) {$\phi(E)$};
\node [above right] at (-1.5,1+\yoffset) {$\phi^2(E)$};

\node [above] at (0.8,1.1+\yoffset) {$\cdots$};
\node [above] at (0,0+\yoffset) {$\nu$};
\end{scope}
\end{tikzpicture}
\caption{Case 2: Separating the divisors \(\phi^n(E)\) by a blow-up}
\end{figure}

\section{Examples}
\label{examplelist}

We now collect some examples illustrating the conclusions of the theorem, as well as the necessity of the hypotheses.  We begin with a few examples of automorphisms of surfaces, the building blocks for many examples on threefolds.

\begin{example}
\label{abeliansurfaceex}
Let \(E \cong \C/\Lambda\) be an elliptic curve, and let \(A = E
\times E\) be an abelian surface.  There is an action of \(\SL_2(\Z)\) on
\(A\) by automorphisms.  If \(M \in \SL_2(\Z)\) has an eigenvalue greater than \(1\), then the induced automorphism \(\phi : A \to A\) is of positive entropy.
\end{example}

\begin{example}
\label{kummersurfaceex}
Let \(A\) be as in Example~\ref{abeliansurfaceex}, and  \(i : A \to A\) be the  involution \(x \mapsto -x\). The map \(\phi\) above descends to an automorphism \(\psi : A/i \to A/i\).  The quotient \(A/i\) has sixteen nodes, but \(\psi\) lifts to a map \(\bar{\psi} : S \to S\), where \(S\) is a Kummer surface, the minimal resolution of \(A/i\). This is a positive entropy automorphism of a K3 surface.
\end{example}

\begin{example}[\cite{blanc}]
\label{blancex}
Let \(E \subset \P^2\) be a smooth cubic curve and fix a general point \(p\) on \(E\).  Given a general point \(x \in \P^2\), the line \(\ell_{px}\) meets \(E\) at a third, distinct point \(y\).  Define a rational map \(\tau_p : \P^2 \rat \P^2\) which acts on each line \(\ell_{px}\) by the unique involution of \(\P^1\) fixing \(x\) and \(y\).

This map is defined at \(x\) unless the line \(\ell_{px}\) is tangent to \(E\). There are four points \(p_1,p_2,p_3,p_4\) on \(E\) at which such tangency occurs.  One may check that \(\tau_p\) lifts to an involutive automorphism of the blow-up \(X_p = \Bl_{p,p_1,\ldots,p_4} \P^2\), which fixes the strict transform of \(E\) pointwise.

Carrying out the same construction twice more with the same curve \(E\) but using different initial points \(q\) and \(r\) yields automorphisms of the analogous \(5\)-point blow-ups \(X_q\) and \(X_r\), which again fix the strict transform of \(E\) pointwise. The maps \(\tau_p\), \(\tau_q\), and \(\tau_r\) all lift to automorphisms of the common resolution \(X = \Bl_{p,p_i,q,q_i,r,r_i}(\P^2)\), a blow-up of \(\P^2\) at \(15\) points.  Although the three maps are individually involutions, the composition \(\tau_p \circ \tau_q \circ \tau_r\) has positive entropy.
\end{example}

Other constructions give examples of blow-ups of \(\P^2\) at only \(10\) points which admit automorphisms of positive entropy.  These were the first examples of positive entropy automorphisms of rational surfaces, due to Bedford--Kim~\cite{bedfordkim} and McMullen~\cite{mcmullen}.

Examples \ref{kummersurfaceex} and \ref{blancex} typify the two basic constructions of positive entropy automorphisms of rational surfaces~\cite{oguisotruong}:
\begin{enumerate}
\item Start with a positive entropy automorphism \(\psi : Y \to Y\), and a finite order automorphism \(g : Y \to Y\) commuting with \(\psi\).  Then \(\psi\) induces an automorphism of \(Y/\ang{g}\), which lifts to an automorphism \(\phi : X \to X\) of a resolution \(X \to Y/\ang{g}\).  Examples with \(X\) rational can be obtained when \(Y\) is an abelian surface.
\item Start with a carefully chosen birational automorphism \(\psi : Y \rat Y\).  By a sequence of blow-ups, construct a model \(X\) on which \(\psi\) lifts to an automorphism \(\phi : X \to X\).  Many examples with \(X\) rational can be obtained when \(Y = \P^2\).
\end{enumerate}
The result of Theorem~\ref{blowupautos} is that the second of these approaches can never yield primitive automorphisms in dimension \(3\).  In contrast, blow-up constructions do yield many interesting pseudoautomorphisms in higher dimensions, as in e.g.\ \cite{perronizhang}, \cite{bedforddillerkim},\dots.

The next example shows that the two-dimensional analog of Corollary~\ref{invariantstuff} is not true.
\begin{example}[{\cite[Theorem 4.2]{bedfordkim3}}]
\label{bedfordkimnocurves}
There exists a rational surface \(S\) and a positive entropy automorphism \(\phi : S \to S\) with no \(\phi\)-invariant curves.
\end{example}

We now give some three-dimensional examples illustrating the various cases of Theorem~\ref{main} and the more detailed Theorem~\ref{precisemain}.

\begin{example}[Case 1(a)]
The construction in Example~\ref{abeliansurfaceex} generalizes to dimension three. Let \(E\) be an elliptic curve and \(M \in \SL_3(\Z)\) a linear map with an eigenvalue greater than \(1\); then \(M\) induces a positive entropy automorphism of \(A = E \times E \times E\), which can be primitive.
\end{example}

\begin{example}[Case 1(b), \cite{oguisotruong}]
\label{otcalabiyauex}
Let \(\omega = (-1+\sqrt{3}i)/2\) and consider the elliptic curve \(E = \C/(\Z \oplus \Z\omega)\).  Then \(\Z/3\Z\) acts on \(E \times E \times E\) via the map \(\sigma(x,y,z) = (\omega x , \omega y, \omega z)\). The quotient \((E \times E \times E)/\sigma\) has canonical singularities of type \(\nicefrac{1}{3}(1,1,1)\).  Let \(X \to (E \times E \times E)/\sigma\) be the crepant resolution given by blowing up each singular point.  Then \(X\) is a smooth Calabi-Yau threefold which admits a primitive automorphism of positive entropy, induced by an element of \(\SL_3(\Z)\).
\end{example}

\begin{example}[Case 2(a)]
Let \(S\) be a projective K3 surface admitting a positive entropy automorphism \(\psi : S \to S\).  Let \(C\) be a curve of genus at least \(2\), and take \(X = S \times C\) with \(\phi = \psi \times \id\).
Then \(\kappa(X) = 1\), the canonical class \(K_X = p_2^\ast K_C\) is nef, and the canonical model of \(X\) is the projection \(\pi : X \to C\), given by the linear system \(\abs{3K_X}\).  The canonical fibration realizes \(\phi\) as an imprimitive map.
\end{example}

\begin{example}[Case 2(b)]
\label{p1bundle}
Let \(\psi : S \to S\) be a surface automorphism of positive entropy and \(\cE\) be a rank-\(2\) vector bundle on \(S\) for which there exists an isomorphism \(\psi^\ast(\cE) \to \cE\).  Then \(\psi\) induces a positive entropy automorphism of \(X = \P_S (\cE)\). The total space \(X\) is a \(\P^1\)-bundle over \(S\).  For example, take \(\psi : S \to S\) to be an automorphism of a rational surface, and set \(\cE = TS\) or \(\cE = \cO \oplus \cO(K_S)^{\otimes n}\).
\end{example}

\begin{example}[Case 2(c), cf.\ also Theorem~\ref{blowupautos}]
\label{productex}
Let \(Y = \P^2 \times \P^1\).  By blowing up ten curves \(p_i \times \P^1\), we obtain \(X = S \times \P^1\), where \(S\) is a rational surface.  If the points \(p_i\) are chosen carefully, then \(S\) admits a positive entropy automorphism \(\psi : S \to S\), and \(X\) has an automorphism \(\phi = \psi \times \id : X \to X\).  This is consistent with Theorem~\ref{blowupautos}, which shows that if a blow-up of a smooth threefold with no positive entropy automorphisms admits a positive entropy automorphism, the automorphism must be imprimitive.  In this example, we have \(X^\prime = X\) in the formulation of case 2(c).
\end{example}

\begin{example}[Case 3(a)]
Let \(\psi : Y \to Y\) be a positive entropy automorphism of a smooth threefold, and let \(V\) be \(\psi\)-periodic closed subscheme of \(Y\). Then  \(\psi\) lifts to an automorphism of \(X = \Bl_V Y\).  For example, \(V\) might be a \(\psi\)-invariant point or smooth curve.
\end{example}

\begin{example}[Case 3(b), \cite{oguisotruong}]
\label{otrational}
Let \(\omega\) and \(E\) be as in Example~\ref{otcalabiyauex}, and consider the order \(6\) automorphism of \(E \times E \times E\) given by \(\tau(x,y,z) = (-\omega x, -\omega y, -\omega z)\).  Let \(X\) be a resolution of the quotient \((E \times E \times E)/\tau\).  It is checked in~\cite{oguisotruong} that \(X\) a smooth rational threefold and that the action of \(\SL_3(\Z)\) induces primitive automorphisms of positive entropy.   However, these automorphisms are not dynamically minimal in the sense of Definition~\ref{minimaldef}, because some of the exceptional divisors of the resolution \(X \to (E \times E \times E)/\tau\) are \(\phi\)-invariant and can be contracted to terminal singularities.  We explore the geometry of this example in more detail in Section~\ref{singularcase}.
\end{example}

\begin{remark}
Since the variety \(X\) of Example~\ref{otrational} admits an imprimitive automorphism of positive entropy, it follows from Theorem~\ref{blowupautos} that \(X\) can not be obtained by a sequence of smooth blow-ups of \(\P^3\), answering \cite[Question 5.11]{oguisoicm}.  Bisi, Cascini, and Tasin have recently given a much simpler proof of this fact for a broad class of quotients of abelian threefolds.
\end{remark}

\section{Preliminaries}
\label{thehardpart}

To begin, we collect some conventions. 
Suppose that \(X\) and \(Y\) are projective varieties.  A map \(\phi : X \to Y\) written with a solid arrow indicates a morphism, while a map \(\phi : X \rat Y\) denotes a rational map.  By an \emph{automorphism}  \(\phi : X \to X\), we mean a biregular automorphism.  A birational map \(\phi : X \rat X\) is called a \emph{birational automorphism}.  A \emph{pseudoautomorphism} \(\phi : X \rat X\) is a birational automorphism that is an isomorphism in codimension \(1\), i.e.\ such that neither \(\phi\) nor \(\phi^{-1}\) contracts any divisors.

Suppose that \(X\) is a smooth, projective variety. If \(V \subset X\) is a closed subscheme of \(X\), we write \(\Bl_V(X)\) for the blow-up of \(X\) along \(V\).  We will say that \(Y\) is a \emph{smooth blow-up} of \(X\) if there is a sequence of maps \(\pi_i  : X_{i+1} \to X_i\), with \(X_n = Y\) and \(X_0 = X\), such that each map \(\pi_i\) is the blow-up of \(X_i\) along a smooth subvariety. This is a stronger than the assumption that \(Y\) is smooth and can be obtained as the blow-up of some closed subscheme \(V \subset X\)(cf. \cite[Ex.\ 22]{kollarexercises}).

If \(\phi : X \to X\) is an automorphism of a variety, and \(\pi : Y \to X\) is a morphism from some other variety, we say that \(\phi\) \emph{lifts} to an automorphism of \(Y\) if there exists an automorphism \(\psi : Y \to Y\) such that \(\pi \circ \psi = \phi \circ \pi\).  Similarly, if \(\pi : X \to V\) is a morphism from \(X\) to another variety, we say that \(\phi\) \emph{descends} to an automorphism of \(V\) if there exists an automorphism \(\psi : V \to V\) with \(\pi \circ \psi = \phi \circ \pi\).

A normal variety \(X\) is said to have terminal singularities if:
\begin{enumerate}
\item \(mK_X\) is Cartier for some integer \(m\);
\item on a smooth resolution \(\pi : Y \to X\), we can write \(mK_Y = f^\ast(mK_X) + \sum a_i E_i\), and the coefficients \(a_i\) are all positive. Equivalently, given a regular \(n\)-form \(\omega\) on \(X\) (or more generally a section of \(mK_X\) for any \(m > 0\)), the pull-back of \(\omega\) to \(Y\) vanishes along all the exceptional divisors. 
\end{enumerate}

In dimension two, \(X\) has terminal singularities if and only if it is smooth.  In higher dimensions, this class of singularities arises naturally in the course of running the MMP. 

Write \(N^1(X)\) for the \(\R\)-vector space of divisors on \(X\) modulo numerical equivalence, and \(N^1(X)_\Z\) for the lattice in \(N^1(X)\) spanned by divisors with integral coefficients.  The Picard rank \(\rho(X)\) is the dimension of \(N^1(X)\), which is finite.  If \(D\) is a divisor or line bundle on \(X\), we write \([D]\) for its numerical class.   Dually, \(N_1(X)\) is the \(\R\)-vector space of curves on \(X\) modulo numerical equivalence, \([C]\) is the numerical class of a curve \(C\), and \(\NEb(X) \subset N_1(X)\) is the Mori cone, the closure of the span of effective curve classes.

The proof of Theorem~\ref{main} will require some results from the threefold MMP, which we collect below as a single statement.

\begin{theorem}[The cone theorem for smooth threefolds, etc.]
\label{conetheorem}
Suppose that \(X\) is a smooth, projective threefold.  There is a countable set of rational curves \(C_i \subset X\) with \(-4 \leq K_X \cdot C_i < 0\) such that
\[
\NEb(X) = \NEb_{K_X \geq 0}(X) + \sum_i \Ra [C_i].
\]
Let \(R\) be a \(K_X\)-negative extremal ray; if \(K_X\) is not nef, then there exists at least one such ray.  There exists a contraction map \(c_R : X \to Z\) to a projective variety \(Z\)  such that a curve \(C \subset X\) is contracted to a point by \(c_R\) if and only if \([C]\) lies on the ray \(R\). Moreover,  \((c_R)_\ast(\cO_X) = \cO_Z\) and \(\rho(Z) = \rho(X) - 1\).

The contraction \(c_R\) is of one of the following types. 
\begin{enumerate}
\item (Mori fiber space).  We have \(\dim Z < \dim X\), and the general  fiber of \(c_R\) is a Fano variety.  There are three subcases:
\begin{enumerate}
\item \(Z\) is a surface, and the fibers of \(c_R\) are plane conics;
\item \(Z\) is a curve, and general fibers of \(c_R\) are del Pezzo surfaces;
\item \(Z\) is a point, and \(X\) is a Fano variety of Picard rank \(1\).
\end{enumerate}
\item (Divisorial contraction). The map \(c_R : X \to Z\) is birational, and the exceptional locus of \(c_R\) consists of a single irreducible divisor \(E\).  One of the following sub-cases occurs:
\begin{enumerate}
\item[(E1)] \(Z\) is smooth, and \(c_R : X \to Z\) is the blow-up of a smooth curve.
\item[(E2)] \(Z\) is smooth, and \(c_R : X \to Z\) is the blow-up of a smooth point.
\item[(E3)] \(E \cong \P^1 \times \P^1\) with normal bundle of bidegree \((-1,-1)\), and the two rulings on \(E\) are numerically equivalent in \(X\).  The map \(\pi : X \to Z\) contracts \(E\) to a singular point locally analytically isomorphic to \(x^2+y^2+z^2+t^2 = 0\).
\item[(E4)] \(E\) is isomorphic to a singular quadric cone, with normal bundle \(\cO_E(E) = \cO_E \otimes \cO_{\P^3}(-1)\). The image \(Z\) has a singularity locally analytically isomorphic to \(x^2+y^2+z^2+w^3 = 0\).
\item[(E5)] \(E\) is isomorphic to \(\P^2\) with normal bundle \(\cO_{\P^2}(-2)\), and \(c_R\) contracts \(E\) to a singular point. The singularity is locally analytically isomorphic to the vertex of the cone over the Veronese surface in \(\P^5\), the quotient \(\A^3_\C/(\pm 1)\).
\end{enumerate}
\end{enumerate}
Furthermore, the number of rays \(R\) determining contractions of type (1) is finite.
\end{theorem}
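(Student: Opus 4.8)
The statement collects several foundational results of the threefold minimal model program, so the plan is not to prove it from scratch but to assemble the standard pieces in the right order, making explicit which deep inputs are being invoked. The three essential ingredients are Mori's Cone Theorem, the Contraction Theorem, and Mori's explicit classification of extremal contractions on smooth threefolds; everything else in the statement is a formal consequence of these.

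First I would establish the cone decomposition \(\NEb(X) = \NEb_{K_X \geq 0}(X) + \sum_i \Ra [C_i]\). The route I would take is bend-and-break: reducing modulo a large prime and using the Frobenius to produce rational curves through a point where \(-K_X\) is positive, then degenerating them to bound their degree. This simultaneously produces the countable collection of rational curves \(C_i\) spanning the \(K_X\)-negative extremal rays and yields the length estimate \(0 < -K_X \cdot C_i \leq \dim X + 1 = 4\), which in dimension three is exactly \(-4 \leq K_X \cdot C_i < 0\). The local finiteness of these rays away from the hyperplane \(K_X^\perp\), and hence the fact that only countably many appear, comes out of the same analysis.

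Next, for a fixed \(K_X\)-negative extremal ray \(R\), I would invoke the Contraction Theorem to produce \(c_R : X \to Z\). The construction chooses a nef supporting divisor \(H\) with \(H^\perp \cap \NEb(X) = R\), proves \(H\) is semiample via the Base-Point-Free Theorem, and sets \(Z = \Proj \bigoplus_m H^0(X, mH)\). That a curve is contracted precisely when its class lies on \(R\), that \((c_R)_\ast \cO_X = \cO_Z\) — so \(Z\) is normal with connected fibers — and that \(\rho(Z) = \rho(X) - 1\) because contracting a single extremal ray drops the Picard number by one, are then the standard formal consequences.

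The heart of the matter, and the step I expect to be the main obstacle, is the classification of \(c_R\) into the three fiber types 1(a)--(c) and the five divisorial types (E1)--(E5). Here I would follow Mori: analyze the deformation theory of the extremal rational curves lying in a fiber of \(c_R\), using the length bound to constrain the dimension of the covering family and the local structure of \(X\) along the exceptional locus. When \(\dim Z < \dim X\) one reads off whether \(Z\) is a surface, a curve, or a point and identifies the Fano fibers; when \(c_R\) is birational, computing the normal bundle of the exceptional divisor \(E\) together with the scheme structure it carries pins down each of the five cases and the precise analytic type of the resulting singularity on \(Z\). Finally, finiteness of the fiber-type contractions follows from the boundedness of the covering families of rational curves they contract: each such contraction is represented by a covering family of bounded anticanonical degree, there are only finitely many such families up to algebraic equivalence, and hence only finitely many rays of fiber type. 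This classification is precisely the content of Mori's theorem on threefolds with non-nef canonical bundle and is by far the most substantial input; the remainder of the statement is essentially formal given the Cone and Contraction theorems.
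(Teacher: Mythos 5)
Most of your proposal coincides with what the paper does: the cone decomposition, the length bound, the contraction theorem with \((c_R)_\ast\cO_X = \cO_Z\) and \(\rho(Z)=\rho(X)-1\), and the classification into cases 1(a)--(c) and (E1)--(E5) are all simply quoted from the standard references (Koll\'ar--Mori and Mori's classification of extremal contractions on smooth threefolds), and your sketches of bend-and-break and the base-point-free theorem are just the proofs of those cited results. The one part of the statement for which the paper supplies an actual argument is the final claim -- finiteness of the fiber-type rays -- and it is exactly there that your proposal has a genuine gap. You assert that covering families of rational curves of bounded anticanonical degree fall into finitely many algebraic equivalence classes. That principle is false. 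Boundedness theorems for spaces of rational curves require a degree bound with respect to an \emph{ample} divisor, and \(-K_X\) need not be ample here. Concretely, let \(S\) be the blow-up of \(\P^2\) at the nine base points of a general cubic pencil, and let \(X = S \times \P^1\). Translations by the infinite Mordell--Weil group of the elliptic fibration \(S \to \P^1\) are automorphisms of \(S\) preserving \(-K_S\), and they carry the conic class \(H - E_1\) to infinitely many distinct classes \(C_i\) with \(C_i^2 = 0\) and \(-K_S \cdot C_i = 2\), each moving in a pencil of rational curves sweeping out \(S\). The curves \(C_i \times \set{t}\) then give infinitely many covering families of rational curves on the smooth threefold \(X\), all of anticanonical degree \(2\), lying in pairwise distinct algebraic equivalence classes. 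There is no contradiction with the theorem, because these classes are not extremal in \(\NEb(X)\); but this kills your lemma as stated, and if you weaken it so that it applies only to classes spanning extremal rays, it becomes precisely the assertion you are trying to prove. Note also that the local discreteness of \(K_X\)-negative extremal rays provided by the cone theorem cannot rescue the argument: rays of bounded length may accumulate against the hyperplane \(K_X^{\perp}\), as the \((-1)\)-curves on \(\Bl_9\P^2\) already show.

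The paper's proof of this step, due to Wi\'sniewski, is a short intersection-theoretic argument of a completely different flavor, and you should use it instead. If \(c_R : X \to Z\) is a fiber-type contraction, then \(c_R^{\ast} N^1(Z)\) is a codimension-one subspace of \(N^1(X)\) (since \(\rho(Z) = \rho(X) - 1\)), and it is contained in \(R^{\perp}\), hence equal to it. Moreover the cubic intersection form vanishes identically on it, because \((c_R^{\ast}D)^3 = 0\) for every \(D \in N^1(Z)\) when \(\dim Z < 3\). Thus each fiber-type ray \(R\) produces a hyperplane \(R^{\perp}\) contained in the affine cubic hypersurface \(\set{D^3 = 0} \subset N^1(X)\); distinct rays give distinct hyperplanes, and a nonzero cubic form can vanish on at most three hyperplanes. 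Hence there are at most three fiber-type rays, which is the finiteness you need.
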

\begin{proof}
The first parts are the cone and contraction theorems, which can be found e.g.\ as~\cite[Theorem 3.7]{kollarmori}.  The fact that \(\rho(Z) = \rho(X)-1\) is \cite[Corollary 3.17]{kollarmori}.  The classification of contractions on a smooth threefold is a fundamental result of Mori~\cite{morinotnef}; the breakdown of case (2) into subcases appears as~\cite[Theorem 3.3]{morinotnef}.  Note that on a smooth threefold, there are no flipping contractions.

The final claim on the number of rays giving Mori fiber spaces is an observation of Wi\'sniewski~\cite[Theorem 2.2]{wisniewski} (see also ~\cite{kollarrationalcurves}, Exercise III.1.9). We include the proof for convenience.

Let \(V \subset N^1(X)\) denote the affine cubic hypersurface defined by \(D^3 = 0\).  If \(c_R : X \to Z\) is the contraction of a \(K_X\)-negative extremal ray, then \(\rho(Z) = \rho(X)-1\), and in particular \(c_R^\ast(N^1(Z)) \subset N^1(X)\) has codimension \(1\).  If \(R\) determines a Mori fiber space, then \(\dim Z < \dim X\) and so \((f^\ast D)^3 = 0\) for any class \(D \in N^1(Z)\).  In particular \(f^\ast(N^1(Z)) \subset V\) is a hyperplane contained in \(V\).  However, \(V\) is a degree \(3\) affine subvariety, and so contains at most \(3\) hyperplanes.  The number of extremal rays determining Mori fiber space structures is thus at most \(3\).
\end{proof}

\begin{remark}
The reader interested primarily in automorphisms of smooth blow-ups, as in Theorem~\ref{blowupautos}, need not worry about the MMP. When we consider a contraction \(\pi : X \to Z\) of the MMP, it can be assumed to be the final of the sequence of blow-ups used in constructing \(X\), so that \(\pi : X \to Z\) is either the blow-up of a point or a smooth curve.  These correspond to contractions of Type (E1) or (E2) in Theorem~\ref{conetheorem}.  Lemmas~\ref{nefcanonical} and~\ref{morifiberspace} below are not needed in this case, but the rest of the argument is essentially the same.
\end{remark}

\begin{lemma}
\label{autosdescend}
Suppose that \(\phi : X \to X\) is a positive entropy automorphism, and that \(c_R : X \to Z\) is the contraction of an extremal ray on \(X\).  If \(\phi^\ast(R) = R\), then \(\phi\) descends to a automorphism \(\psi : Z \to Z\).  If \(c_R\) is a divisorial contraction, then \(\psi\) has positive entropy as well.
\end{lemma}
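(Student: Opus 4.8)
The plan is to descend \(\phi\) along the extremal contraction using the uniqueness of \(c_R\), and then to transfer positive entropy through the pullback \(c_R^\ast\). Throughout, the hypothesis \(\phi^\ast(R) = R\) means that \(\phi\) preserves the ray \(R\); since \(\phi\) is an automorphism, \(\phi_\ast\) permutes the irreducible curves whose class lies on \(R\) among themselves.

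First I would produce the descended map \(\psi\). Because \(c_R\) contracts precisely the irreducible curves \(C\) with \([C] \in R\), and \(\phi\) carries such curves to such curves, the composite \(c_R \circ \phi : X \to Z\) contracts exactly the same curves as \(c_R\). Concretely, each fibre \(F = c_R^{-1}(z)\) is connected and every curve inside it lies on \(R\); its image \(\phi(F)\) is again connected and is swept out by \(R\)-curves, so \(c_R\) sends \(\phi(F)\) to a point. Since \((c_R)_\ast \cO_X = \cO_Z\), the rigidity lemma shows that \(c_R \circ \phi\) factors as \(\psi \circ c_R\) for a morphism \(\psi : Z \to Z\). Equivalently, \(c_R \circ \phi\) is again \emph{the} contraction of the ray \(R\), so uniqueness of the extremal contraction supplies \(\psi\). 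Running the same argument for \(\phi^{-1}\) gives an inverse morphism, so \(\psi\) is an automorphism; this proves the first assertion.

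Now suppose \(c_R\) is divisorial, with exceptional divisor \(E\). The key observation is that \(\phi\) fixes \(E\): from \(c_R \circ \phi = \psi \circ c_R\) we get \(c_R(\phi(E)) = \psi(c_R(E))\), which is lower-dimensional, so the divisor \(\phi(E)\) is contracted by \(c_R\) and must equal the unique contracted divisor \(E\). In particular \(\phi^\ast[E] = [E]\). Pulling back divisor classes in the relation \(c_R \circ \phi = \psi \circ c_R\) gives \(\phi^\ast \circ c_R^\ast = c_R^\ast \circ \psi^\ast\), so \(W := c_R^\ast N^1(Z)\) is a \(\phi^\ast\)-invariant subspace on which \(c_R^\ast\) conjugates \(\psi^\ast\) to \(\phi^\ast|_W\). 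Since \(c_R\) is a divisorial contraction, \(c_R^\ast\) is injective and \(\dim W = \rho(Z) = \rho(X) - 1\); moreover \([E] \notin W\), because \(E \cdot C < 0\) for a curve \(C\) on \(R\) while every class in \(W\) meets \(C\) in zero. Hence \(N^1(X) = W \oplus \R[E]\), and by the previous observation this is a \(\phi^\ast\)-invariant splitting on which \(\phi^\ast\) acts as the identity on the \(\R[E]\) factor. The characteristic polynomial of \(\phi^\ast\) therefore factors as \((t-1)\) times that of \(\psi^\ast\). As \(\phi\) has positive entropy, \(\phi^\ast\) has a real eigenvalue \(\lambda > 1\); this cannot be the eigenvalue \(1\) contributed by \(\R[E]\), so it is an eigenvalue of \(\psi^\ast\), and \(\psi\) has positive entropy.

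I expect the main point requiring care to be the descent in the second paragraph: one must check that \(\phi\) genuinely contracts each fibre of \(c_R\) to a point — that \(\phi(F)\) is connected and covered by \(R\)-curves — before invoking the rigidity lemma or the uniqueness of \(c_R\). Once the equivariant factorization \(c_R \circ \phi = \psi \circ c_R\) is in place, the entropy statement is formal, resting only on the clean fact that the hypothesis \(\phi^\ast(R) = R\) forces \(\phi\) to fix the exceptional divisor \(E\), and hence to split \(N^1(X)\) equivariantly so that the dominant eigenvalue descends to \(\psi^\ast\).
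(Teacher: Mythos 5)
Your proposal is correct and follows essentially the same route as the paper: the descent of \(\phi\) via the rigidity lemma applied to \(c_R \circ \phi\) (using \((c_R)_\ast \cO_X = \cO_Z\)), the inverse obtained by symmetry, and then the decomposition \(N^1(X) = c_R^\ast N^1(Z) \oplus \R[E]\) with \(\phi^\ast\) block-triangular so that the eigenvalue \(\lambda > 1\) must come from \(\psi^\ast\). The only difference is cosmetic: you spell out details the paper leaves implicit, such as the verification that \(\phi(E) = E\) and that \(c_R^\ast N^1(Z)\) is \(\phi^\ast\)-invariant.
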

\begin{proof}
The composition \(c_R \circ \phi\) contracts every curve with numerical class on the ray \(\phi^{\ast}(R) = R\), and so has the same fibers as \(c_R\) itself.  Since \((c_R)_\ast(\cO_X) = \cO_Z\), by the rigidity lemma, \(c_R \circ \phi\) factors through \(c_R\), inducing a map \(\psi : Z \to Z\)~\cite[Lemma 1.15(b)]{debarre}.
Applying the same argument with \(c_R\) and \(c_R \circ \phi\) exchanged shows that the induced map on \(Z\) is an automorphism.
\[
\xymatrix{
X \ar[d]_{c_R} \ar[r]^\phi & X \ar[d]_{c_R} \\
Z \ar[r]^\psi & Z
}
\]
If \(c_R\) is divisorial, we have a decomposition \(N^1(X) = c_R^\ast N^1(Z) \oplus \R[E]\), where \(E\) is the exceptional divisor.  The divisor \(E\) is \(\phi\)-invariant, so the block form of \(\phi^\ast\) with respect to this decomposition is \(\phi^\ast = \left( \begin{smallmatrix} \psi^\ast & 0 \\ - & 1 \end{smallmatrix} \right)\).  In particular, the eigenvalues of \(\psi^\ast\) coincide with those of \(\phi^\ast\), but without one eigenvalue \(1\).  As \(\phi^\ast\) has an eigenvalue bigger than \(1\), so too must \(\psi^\ast\).
\end{proof}

The geometry of ruled surfaces contained in \(X\) plays an essential role in the argument, and we next recall some basic facts about cones of divisors on ruled surfaces.  By a ruled surface we mean the projectivization of a rank-\(2\) bundle over a smooth curve, sometimes called a geometrically ruled surface.

\begin{proposition}
\label{ruledsurfacefacts}
Suppose that \(C\) is a smooth curve, \(\cE\) is a rank-\(2\) vector bundle over \(C\) and \(S = \P_C(\cE)\), with projection \(g : S \to C\) and general fiber \(f\).  Then \(N^1(S)\) is generated by two classes: the class \([f]\) of a fiber, and the class \(\xi = [\cO_S(1)]\).  The cone of curves \(\NEb(S) \subset N_1(S)\) is spanned by two boundary classes: the class \([f]\) of a fiber, and a second class \(\alpha\).  The ray \(\Ras \alpha\) satisfies one of the following:
\begin{enumerate}
\item[(R1).] \(\alpha^2 < 0\) and \(\Ras \alpha\) is represented by a unique irreducible curve.
\item[(R2).] \(\alpha^2 = 0\) and either:
\begin{enumerate}
\item[(R2a).] there is only a finite set of curves with numerical class in \(\Ras \alpha\);
\item[(R2b).] there is a map \(h : S \to \P^1\) such that the fibers of \(h\) are all in the class \(\alpha\).  Every irreducible curve with class in \(\Ras \alpha\) is a rational multiple of a fiber of \(h\). 
\end{enumerate}
\end{enumerate}
Dually, the nef cone is spanned by \([f]\) and a second ray \(\beta\) satisfying \(\alpha \cdot \beta = 0\).  Both \(\NEb(S)\) are \(\Nef(S)\) are rational polyhedral cones. In Case (R2), the rays \(\alpha\) and \(\beta\) coincide. 
\end{proposition}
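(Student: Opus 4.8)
The plan is to reduce everything to the fact that, since \(S\) is a surface of Picard rank \(2\), its intersection form has signature \((1,1)\), so that all the cone geometry is dictated by the two null rays of this form. First I would invoke the standard structure theory of ruled surfaces (Hartshorne, \emph{Algebraic Geometry}, Chapter V, \S2): \(S = \P_C(\cE)\) admits a section \(C_0\), and \(N^1(S)\) is freely generated by \([C_0]\) and the fiber class \([f]\), with \(f^2 = 0\), \(C_0 \cdot f = 1\), and \(C_0^2 =: c \in \Z\). As \(\xi = [\cO_S(1)]\) differs from \([C_0]\) by a multiple of \([f]\), we have \(N^1(S) = \Z[f] \oplus \Z\xi = \Z[C_0] \oplus \Z[f]\), and because \(S\) is a surface we identify \(N^1(S) = N_1(S)\) under the pairing. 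The Hodge index theorem makes this pairing of signature \((1,1)\), and solving \(D^2 = 0\) in the basis \(\{[C_0],[f]\}\) (i.e.\ \(x(cx+2y)=0\)) exhibits the two null rays as the \emph{rational} rays \(\Ras[f]\) and \(\Ras[2C_0 - cf]\).

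Next I would locate the cones inside this picture. The cone \(\NEb(S)\) is strongly convex (by Kleiman it meets \(\{D\cdot H = 0\}\), \(H\) ample, only at \(0\)) and two-dimensional, so it has exactly two extremal rays; since \(f^2 = 0\) and \(f\) is nef, \(\Ras[f]\) is one of them, and I call the other \(\Ras\alpha\). Every class of positive square and positive degree is big, hence effective, so the positive cone lies in the interior of \(\NEb(S)\); an extremal ray therefore cannot have positive square, giving \(\alpha^2 \le 0\). Dually \(\Nef(S) = \NEb(S)^\vee\) has extremal rays \(\Ras[f]\) (self-dual along the null cone, as \(f^2=0\)) and a ray \(\beta\) determined by \(\alpha\cdot\beta = 0\); both cones are rational polyhedral since their edges lie among the rational rays above and \(\beta\) is cut out by the rational condition \(\alpha\cdot\beta=0\). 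The trichotomy is now pure signature theory: if \(\alpha^2 < 0\) then its orthogonal \(\beta\) is timelike, \(\beta^2>0\), which is (R1); if \(\alpha^2 = 0\) then \(\alpha\) is null, and in signature \((1,1)\) the orthogonal complement of a null vector is the line it spans, forcing \(\beta \in \Ras\alpha\), i.e.\ \(\alpha = \beta\), which is (R2). Matching against the invariant \(e = e(\cE)\) confirms \(\alpha = [C_0]\) with \(C_0^2 = -e < 0\) when \(e>0\) (case R1) and \(\alpha^2 = 0\) when \(e \le 0\) (case R2).

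Finally I would extract the geometric content. In (R1), existence of an irreducible curve on \(\Ras\alpha\) is immediate (\(\alpha = [C_0]\)), and uniqueness is a one-line lattice argument: two distinct irreducible curves with proportional classes of negative square would meet in \(C\cdot C' = \lambda\,C_0^2 < 0\) points, which is impossible. In (R2), \(\alpha^2 = 0\) forces any two distinct irreducible curves on \(\Ras\alpha\) to be disjoint (\(\Gamma\cdot\Gamma' \propto \alpha^2 = 0\)), so the situation splits according to whether there are finitely many such curves, which is (R2a), or infinitely many, which is (R2b). In the latter case the infinitely many pairwise disjoint curves cannot fit in a proper closed subset, so they sweep out \(S\) and are the fibers of a fibration \(h : S \to B\) onto a smooth curve with fiber class in \(\Ras\alpha\); because a \(g\)-fiber \(f \cong \P^1\) meets the fibers of \(h\) in \(f\cdot\alpha > 0\) points, the restriction \(h|_f : \P^1 \to B\) is nonconstant, which forces \(B \cong \P^1\). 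Any irreducible curve with class in \(\Ras\alpha\) meets the general fiber in \(\alpha^2 = 0\) points, hence is \(h\)-vertical and so a rational multiple of a fiber.

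I expect the genuine obstacle to be exactly this last point (R2b): the formal part — the signature computation, rationality and polyhedrality, the trichotomy on \(\alpha^2\), and the (R1) uniqueness — is essentially bookkeeping once the intersection form is in hand, whereas promoting ``infinitely many numerically proportional curves'' to an honest morphism \(h : S \to \P^1\), and identifying the base, requires the geometric input above (disjointness, the covering argument producing a fibration, and the \(\P^1\)-fiber argument pinning down \(B\)). The safest way to organize the finite/infinite dichotomy is to run it through the invariant \(e\) and the explicit description of sections and effective divisors in Hartshorne \S V.2, where the decomposable versus indecomposable distinction for \(e \le 0\) records precisely when the null ray moves.
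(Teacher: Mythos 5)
Most of your outline is sound and runs parallel to the paper's own proof: the paper likewise gets \(\alpha^2 \leq 0\) from extremality of \(\alpha\), gets rationality of the second ray by writing \(\alpha \sim af + \xi\) and solving \(\alpha^2 = 2a + \deg \cE = 0\), quotes the standard fact (cited there to Debarre's book) that an extremal ray of negative square is spanned by a unique irreducible curve, deduces pairwise disjointness of curves on the ray in case (R2), and pins down the base as \(\P^1\) exactly as you do, by noting that the \(g\)-fibers are rational curves not contracted by \(h\). Your (R1) uniqueness argument and your verticality argument at the end of (R2b) are also fine.

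The genuine gap is the one sentence you lean on hardest: ``the infinitely many pairwise disjoint curves cannot fit in a proper closed subset, so they sweep out \(S\) and \emph{are the fibers of a fibration} \(h : S \to B\).'' Zariski density of a countable union of pairwise disjoint curves does not make them members of an algebraic family, and nothing in your argument produces the morphism \(h\); this step needs an actual theorem. The paper supplies one: \cite[Theorem 2.1]{totaroalbanese}, which says that already \emph{three} pairwise disjoint irreducible curves whose classes lie on a common ray of square zero force the existence of \(h : S \to \Gamma\) with each of the curves a multiple of a fiber (this is also why the paper's case division is ``at most two curves'' versus ``at least three,'' which the theorem then upgrades to the finite/infinite dichotomy of the statement). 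You correctly flag this step as the crux, but your proposed resolution is circular --- ``the geometric input above'' is the assertion itself --- and your fallback, reading the dichotomy off from the decomposable/indecomposable distinction in Hartshorne \S V.2, is false as stated: \(\cE = \cO \oplus L\) with \(\deg L = 0\) is decomposable for every such \(L\), yet \(\P_C(\cE)\) is of type (R2b) precisely when \(L\) is torsion and of type (R2a) when \(L\) is non-torsion (these are items (4) and (6) of Example~\ref{lotsofruledsurfaces}), and indecomposable --- even stable --- bundles whose projective monodromy is finite also give type (R2b). So some genuine input at this point, such as Totaro's theorem or an argument producing two disjoint \emph{linearly} (not merely numerically) equivalent effective divisors and hence a base-point-free pencil, is unavoidable and is missing from your proposal.
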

\begin{proof}
Because \(\alpha\) spans an extremal ray on \(\NEb(S)\), it must be that \(\alpha^2 \leq 0\).  Moreover, if \(\alpha^2 < 0\), then the ray \(\Ras \alpha\) is spanned by the class of an irreducible curve and there is only a single curve \(B\) with \([B] \in \Ras \alpha\)~\cite[Lemma 6.2(d,e)]{debarre}.

It remains to consider the case in which \(\alpha^2 = 0\).  Suppose that there exist three irreducible curves \(B_1\), \(B_2\), and \(B_3\) whose classes lie on the ray \(\Ras \alpha\). Since \(\alpha^2 = 0\), these curves are necessarily pairwise disjoint, and \cite[Theorem 2.1]{totaroalbanese} implies that there exists a map \(h : S \to \Gamma\) with each of the curves \(B_i\) a multiple of a fiber of \(h\).  The fibers of \(g : S \to C\) are rational curves which are not contracted by \(h\), so it must be that \(\Gamma \cong \P^1\). Every curve with class on \(\Ras \alpha\) must be a fiber of \(h\).

If \(\alpha^2 < 0\), then \(\Ras \alpha\) is represented by an irreducible curve, and the ray \(\Ras \alpha\) is certainly rational.  If \(\alpha^2 = 0\), write \(\alpha \sim a f + \xi\), and then \( \alpha^2 = (af+\xi)^2 = 2a + \xi^2 = 2a + \deg \cE\).  This gives \(a = -\deg \cE/2\), and the ray is again rational.  The dual statements for the nef cone are immediate.
\end{proof}

A ruled surface of type (R1) corresponds to the case that \(\cE\) is unstable, while (R2) arises when \(\cE\) is semistable. In case (R2), we will say that an irreducible curve \(C\) is an \(S\)-covering curve if \([C]\) lies on the ray \(\Ras \alpha\), and \(C\) moves in a family covering \(S\).  We say that \(C\) is an \(S\)-rigid curve if \([C]\) is on the ray \(\Ras \alpha\), but \(C\) does not move in a family.  In Case (R2a), any \(C\) with class on \(\Ras \alpha\) is \(S\)-rigid.  In Case (R2b), both \(S\)-covering and \(S\)-rigid curves can occur: a general fiber of \(h\) is \(S\)-covering, while the support of a multiple fiber of \(h\) is an \(S\)-rigid curve.  In either case, the number of \(S\)-rigid curves is finite.

\begin{figure}[htb]
  \centering
\begin{tikzpicture}[dot/.style={circle,fill=black,minimum size=3pt,inner sep=0pt,outer sep=-1pt}]

\fill[gray!30] (0,2) -- (2,2) -- (2,-1) -- (0,0) -- cycle;
\fill[gray!50] (0,2) -- (2,2) -- (2,1) -- (0,0) -- cycle;

\draw [<->,thick] (-2.1,0) -- (2.1,0);
\draw [<->,thick] (0,-2.1) -- (0,2.1);
\foreach \xx[evaluate={\x=\xx/2}] in {-4,-3,...,4} {
  \draw (\x,2pt) -- (\x,-2pt);
  \draw (2pt,\x) -- (-2pt,\x);
}

\draw [thick] (0,0)--(0,2);
\draw [thick] (0,0)--(2,1);
\draw [thick] (0,0)--(2,-1);

\node [above right] (C1) at (2,0) {$\xi$};
\node [above right] (C2) at (0,2) {$[f]$};
\node (NF) at (3/4,1) {$\scriptstyle \Nef(S)$};
\node (EF) at (3/2,-1/3) {$\scriptstyle \NEb(S)$};
\node [right] (C2) at (2,1) {$\beta$};
\node [right] (C2) at (2,-1) {$\alpha$};

\end{tikzpicture}
\caption{Cones in $N^1(S)$}
\end{figure}

\begin{example}
\label{lotsofruledsurfaces}
We recall some examples of ruled surfaces to illustrate the various possibilities.
\begin{enumerate}
\item (R1) Let \(S = \P_{\P^1}(\cO_{\P^1} \oplus \cO_{\P^1}(n))\) with \(n
\geq 1\) be a Hirzebruch surface.  There is a unique curve of
negative self-intersection representing the class \(\alpha\).
\item (R2a) Let \(C\) be a curve of genus at least \(2\), and let \(\cE\) be a general semistable rank \(2\) bundle on \(C\).  The ray \(\alpha\) on \(\NEb(\P_C(\cE))\) is not represented by any curve~\cite[Example 1.5.1]{lazarsfeld}.
\item (R2a) Let \(E\) be an elliptic curve and \(\cE\) be the non-split extension of \(\cO_E\) by \(\cO_E\).  The ruled surface \(S = \P_E(\cE) \to E\) has a section determined by \(\cE \to \cO_E\), which is the unique curve representing the ray \(\Ras \alpha\). 
\item (R2a) Let \(E\) be an elliptic curve and let \(L\) be a degree \(0\) nontorsion line bundle on \(E\).  Consider the rank-\(2\) bundle \(\cE = \cO \oplus L\).  The ruled surface \(S = \P_E(\cE)\) has two sections, determined by the quotients \(\cE \to \cO\) and \(\cE \to L\), each representing the class \(\alpha\).  There are no other curves with class on \(\Ras \alpha\).
\item (R2b) Let \(S = C \times \P^1 \to C\).  The class \(\alpha\) is represented by all sections \(C \times x\), which are \(S\)-covering curves.
\item (R2b) Let \(E\) be an elliptic curve and let \(M\) be a degree \(0\) \(n\)-torsion line bundle with \(n \geq 2\).  As in (4), there are two sections \(B_1\), \(B_2\) of \(S\), with normal bundles \(M\) and \(M^\ast\).  These sections are \(S\)-rigid curves. There is a map \(h : S \to \P^1\) whose general fibers are \(n\)-fold multisections of \(S \to E\).  The general fibers are \(S\)-covering curves.  The curves \(B_1\) and \(B_2\) appear as the supports of the multiple fibers of \(h\).
\end{enumerate}
\end{example}

\section{Geometric consequences of positive entropy}
\label{geometric}

We are now in position to begin the proof of Theorem~\ref{precisemain}.  Suppose that \(X\) is a smooth threefold, and \(\phi : X \to X\) is an automorphism of positive entropy.

\begin{lemma}[\cite{dqzhangmmp}]
\label{nefcanonical}
If \(K_X\) is nef, then \(\phi : X \to X\) satisfies Theorem~\ref{precisemain}.
\end{lemma}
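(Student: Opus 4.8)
The plan is to invoke the abundance theorem for threefolds to reduce the problem to a study of the canonical and Albanese fibrations, both of which are $\phi$-equivariant, and then to split into cases according to the Kodaira dimension $\kappa(X)$. Since $K_X$ is nef, abundance (Miyaoka--Kawamata) shows that $K_X$ is semiample, so for suitably divisible $m$ the linear system $\abs{mK_X}$ defines the canonical fibration $\pi : X \to X_{\mathrm{can}}$ with $\dim X_{\mathrm{can}} = \kappa(X)$. An automorphism preserves the canonical bundle on the nose, so $\phi^\ast(mK_X) \cong mK_X$ and $\phi$ acts on $H^0(X, mK_X)$; hence $\phi$ descends to an automorphism $\psi : X_{\mathrm{can}} \to X_{\mathrm{can}}$ with $\pi \circ \phi = \psi \circ \pi$. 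Likewise the Albanese map $a : X \to A = \Alb(X)$ is $\phi$-equivariant by functoriality, and I write $q = h^{0,1}(X) = \dim A$.

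First I would dispose of the case $\kappa(X) \geq 1$. If $\kappa(X) = 3$ then $X$ is of general type, so $\Aut(X)$ is finite; but a positive entropy automorphism has infinite order, since $\phi^\ast$ has an eigenvalue greater than $1$, so this case cannot occur. If $\kappa(X) \in \set{1,2}$, then $1 \leq \dim X_{\mathrm{can}} < 3$, and the equivariant canonical fibration $\pi$ exhibits $\phi$ as imprimitive. This is precisely Case 2(a) of Theorem~\ref{precisemain}.

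It remains to treat $\kappa(X) = 0$, in which case $\abs{mK_X}$ maps to a point, so $mK_X \sim 0$, i.e.\ $K_X$ is torsion in $\Pic(X)$ and in particular numerically trivial. Here I would use the Albanese map together with Kawamata's theorem that the Albanese of a variety with $\kappa = 0$ is surjective with connected fibers. If $q = 0$, then $K_X$ is torsion and $h^{0,1}(X) = 0$, which is exactly Case 1(b). If $q = 3$, then $a : X \to A$ is a birational morphism from a smooth threefold with torsion canonical class onto an abelian threefold, hence an isomorphism, so $X$ is abelian and we are in Case 1(a). If $q \in \set{1,2}$, then $a$ is a nontrivial $\phi$-equivariant fibration with $0 < \dim A < 3$, which immediately realizes $\phi$ as imprimitive; this already suffices for Theorem~\ref{main}, and the only remaining work is to pin down the precise imprimitive subcase.

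The main obstacle is exactly this last situation, $\kappa(X)=0$ with $q \in \set{1,2}$. When $q = 2$ the Albanese has one-dimensional (elliptic) fibers over the abelian surface $A$, so, possibly after a birational modification $X^\prime \to X$ to make the fibration equidimensional and after replacing $\phi$ by an iterate, we land directly in Case 2(c) with $S = A$. The genuinely delicate case is $q = 1$: here the Albanese fibers over a curve, with two-dimensional fibers, so it does \emph{not} fit Case 2(c) as stated. To reach the precise conclusion I would invoke the Beauville--Bogomolov/Kawamata decomposition, which shows that up to a finite \'etale cover $X$ is a product of an abelian factor with a surface of Kodaira dimension zero; passing to the \emph{transverse} projection onto that surface factor—the one which actually carries the positive entropy—gives a $\phi$-equivariant morphism with one-dimensional fibers, placing $\phi$ in Case 2(c) after replacing it by an iterate. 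This structure-theoretic input, which is where the real content lies, is furnished by the work of Zhang~\cite{dqzhangmmp}.
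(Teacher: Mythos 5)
Your proof follows essentially the same route as the paper's: abundance gives the \(\phi\)-equivariant canonical fibration, \(\kappa(X)=3\) is excluded by finiteness of the automorphism group, \(\kappa(X)\in\{1,2\}\) gives Case 2(a), and for \(\kappa(X)=0\) the paper likewise deduces numerical triviality of \(K_X\), invokes Kawamata's theorem that the Albanese map is surjective with connected fibers, and splits on \(h^{0,1}(X)\in\{0,3\}\) (Cases 1(b) and 1(a)) versus \(h^{0,1}(X)\in\{1,2\}\) (imprimitivity). The only divergence is at the very end: where you invoke the Beauville--Bogomolov/Kawamata decomposition to pin down the precise imprimitive subcase when \(h^{0,1}(X)\in\{1,2\}\), the paper simply records that the Albanese map realizes \(\phi\) as imprimitive and stops, deferring (as you also ultimately do) to Zhang for the structure theory.
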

\begin{proof}
Since \(K_X\) is nef, the abundance theorem in dimension \(3\)~\cite{kawamataabundance} implies that \(K_X\) is semiample and \(\phi : X \to X\) preserves the canonical fibration \[X \to X_\mathrm{can} = \Proj \bigoplus_{m \geq 0} H^0(X,\cO_X(mK_X)).\]  If \(1 \leq \kappa(X) \leq 2\), then case 2(a) of Theorem~\ref{precisemain} is satisfied.  If \(\kappa(X) = 3\), then \(X\) is of general type and has finite birational automorphism group, and in particular can not admit any positive entropy automorphism.
This shows that \(\phi\) must be imprimitive unless \(\kappa(X) = 0\). 

If \(K_X\) is nef and \(\kappa(X) = 0\), then \(K_X\) must be numerically trivial, and \(\phi\) satisfies Case (1) of Theorem~\ref{precisemain}.  The breakdown into subcases is an observation of Zhang~\cite{dqzhangmmp}.  Consider the Albanese map \(\Alb_X : X \to \Alb(X)\). Since \(\kappa(X) = 0\), \(\Alb_X\) is surjective with connected fibers by a result of Kawamata~\cite{kawamataalb}.  The automorphism \(\phi\) is imprimitive with respect to \(\Alb_X\) unless \(h^{0,1}(X) = 0\) or \(h^{0,1}(X) = 3\).  In the former case, \(X\) is a weak Calabi-Yau variety, which is case 1(b) of Theorem~\ref{precisemain}.  In the latter case, \(\Alb_X\) must be birational, and since \(K_X\) is nef, it must be that \(X = \Alb(X)\) and case 1(a) is satisfied.
\end{proof}

From now on, we will assume that \(K_X\) is not nef.  By the cone theorem, there must exist a \(K_X\)-negative extremal ray \(R\) on \(\NEb(X)\), and a contraction morphism \(\pi : X \to Y\).

\begin{lemma}
\label{morifiberspace}
Suppose that \(X\) admits the structure of a Mori fiber space \(\pi : X \to V\).  Then after replacing by an iterate, any automorphism \(\phi : X \to X\) of infinite order is not primitive, and \(\phi\) satisfies Case 2(b) of Theorem~\ref{precisemain}.
\end{lemma}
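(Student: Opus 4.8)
The plan is to exploit the finiteness of Mori fiber space structures to make $\phi$ preserve the fibration after an iterate, descend $\phi$ to the base $V$, and then analyze the block decomposition of $\phi^\ast$ to force $V$ to be a surface.

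First I would record that $\pi : X \to V$ is the contraction $c_R$ of a $K_X$-negative extremal ray $R$, and that the action of $\phi$ on $\NEb(X)$ permutes the extremal rays whose contractions are Mori fiber spaces. By the final assertion of Theorem~\ref{conetheorem} (Wi\'sniewski's bound), there are at most three such rays, so after replacing $\phi$ by an iterate we may assume $\phi^\ast(R) = R$. Lemma~\ref{autosdescend} then yields an automorphism $\psi : V \to V$ with $\pi \circ \phi = \psi \circ \pi$; since $\pi$ is a dominant morphism with $1 \leq \dim V < \dim X = 3$, this already exhibits $\phi$ as imprimitive in the sense of Definition~\ref{primitivedef}, giving the first assertion (and requiring only that $\phi$ have infinite order, so that an iterate genuinely fixes $R$).

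The substantive point is to pin down $\dim V = 2$, so that $\pi$ is a conic bundle (subcase (1a) of Theorem~\ref{conetheorem}) with $\rho(X/V) = 1$, which is exactly Case 2(b). For this I would study $\phi^\ast$ relative to the exact sequence $0 \to \pi^\ast N^1(V) \to N^1(X) \to Q \to 0$, where $Q$ is one-dimensional because $\rho(V) = \rho(X) - 1$. The relation $\phi^\ast \circ \pi^\ast = \pi^\ast \circ \psi^\ast$ shows that $\phi^\ast$ preserves the subspace $\pi^\ast N^1(V)$ and acts there as $\psi^\ast$, so $\phi^\ast$ is block upper-triangular with diagonal blocks $\psi^\ast$ and a scalar $\lambda$ recording the action on $Q$. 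The decisive observation is that $\lambda = 1$: the class $[-K_X]$ is $\pi$-ample, hence has nonzero image in $Q$, and it is fixed by $\phi^\ast$ because automorphisms preserve the canonical class. Thus every eigenvalue of $\phi^\ast$ apart from this copy of $1$ is an eigenvalue of $\psi^\ast$ acting on $N^1(V)$.

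Finally I would invoke positive entropy to eliminate $\dim V \leq 1$. If $V$ is a point then $N^1(V) = 0$ and the only eigenvalue of $\phi^\ast$ is $1$; if $V$ is a curve (necessarily smooth, as the base of the contraction is normal) then $N^1(V) \cong \R$ and every automorphism acts on it as the identity, so $\psi^\ast$ again contributes only the eigenvalue $1$. In either case $\phi^\ast$ would have spectral radius $1$, contradicting positive entropy. Hence $\dim V = 2$, the contraction $\pi : X \to V$ is a conic bundle with $\rho(X/V) = 1$, the induced $\psi : V \to V$ is a surface automorphism (carrying the eigenvalue greater than $1$), and $\phi$ satisfies Case 2(b). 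I expect this eigenvalue bookkeeping --- in particular the identification $\lambda = 1$ via $-K_X$ together with the triviality of the $N^1$-action of a curve automorphism --- to be the main obstacle, since it is precisely what upgrades the abstract imprimitivity to the sharp statement that the base is a surface.
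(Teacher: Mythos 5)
Your proposal is correct and follows essentially the same route as the paper: Wi\'sniewski's bound lets an iterate of \(\phi\) fix the ray \(R\), Lemma~\ref{autosdescend} descends \(\phi\) to \(V\), positive entropy rules out \(\dim V \leq 1\), and Mori's classification identifies the surface case as a conic bundle, which is Case 2(b). The only difference is bookkeeping in the exclusion step: the paper observes that \(K_X\) is a nonzero \(\phi^\ast\)-fixed class (nonzero since \(X\) is uniruled) and that \(\det \phi^\ast = \pm 1\), which forces spectral radius \(1\) once \(\rho(X) \leq 2\), whereas you reach the same conclusion via the block-triangular decomposition of \(\phi^\ast\) along \(\pi^\ast N^1(V)\), with quotient eigenvalue \(1\) coming from the \(\pi\)-ampleness of \(-K_X\) and trivial action of \(\psi^\ast\) when \(V\) is a point or a curve.
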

\begin{proof}
By the final claim of Theorem~\ref{conetheorem}, the number of extremal rays which determine Mori fiber contractions is finite, and after replacing \(\phi\) by an appropriate iterate we may assume that the ray \(R\) determining \(\pi\) is fixed by \(\phi^\ast\).  By Lemma~\ref{autosdescend}, \(\phi\) descends to an automorphism of \(V\).

If the image of the \(\pi : X \to V\) is a single point and \(\rho(X/V) = 1\), it must be that \(X\) is a Fano variety of Picard rank \(1\).  But the condition of Picard rank \(1\) is incompatible with the existence of a positive entropy automorphism.  If the image of \(\pi : X \to V\) is a curve, then \(\rho(X) = \rho(V) + 1 = 2\).  This too is incompatible with the existence of a positive entropy automorphism, because \(K_X\) is a nonzero \(1\)-eigenvector of \(\phi^\ast : N^1(X) \to N^1(X)\) (as \(X\) is uniruled) and \(\phi^\ast\) has determinant \(\pm 1\).

The final case is that \(\pi : X \to V\) is a map to a surface, and the general fiber is \(\P^1\).  By Mori's classification of threefold contractions, \(p\) must be a conic bundle.  This is Case 2(b) of Theorem~\ref{precisemain}.
\end{proof}

\begin{lemma}
\label{periodicexceptional}
Suppose that \(X\) admits a divisorial contraction \(\pi : X \to Y\) with exceptional divisor \(E\), corresponding to the contraction of an extremal ray \(R \subset \NEb(X)\).  If \(E\) is \(\phi\)-periodic, then some iterate \(\phi^n\) descends to an automorphism of \(Y\).   The map \(\phi\) satisfies either Case 3(a) or Case 3(b) of Theorem~\ref{precisemain}.
\end{lemma}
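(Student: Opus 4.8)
The plan is to first make \(E\) genuinely \(\phi\)-invariant, then show that the extremal ray \(R\) is preserved by \(\phi^\ast\) so that Lemma~\ref{autosdescend} produces the descended automorphism, and finally read off the two subcases directly from Mori's classification of divisorial contractions. Since \(E\) is \(\phi\)-periodic, I would begin by replacing \(\phi\) with the iterate \(\phi^k\) satisfying \(\phi^k(E) = E\); after this replacement \(E\) is invariant and \(\phi\) restricts to an automorphism \(\phi\vert_E : E \to E\).

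The crux of the argument—and the step I expect to require the most care—is verifying that \(\phi^\ast(R) = R\). Because \(\phi\) is a biregular automorphism it fixes \(K_X\) and preserves \(\NEb(X)\), so \(\phi_\ast\) permutes the \(K_X\)-negative extremal rays. The ray \(\phi_\ast(R)\) is contracted by \(c_R \circ \phi^{-1}\), whose exceptional divisor is \(\phi(E) = E\). The key point is that a divisorial contraction of a smooth threefold is determined by its exceptional divisor: the contracted curves all lie in \(E\), and \(\phi_\ast\) preserves the linear functional \(D \mapsto E \cdot D\) on \(N_1(X)\) since \(\phi^\ast[E] = [E]\). Thus \(\phi\vert_E\) carries the family of contracted curves (the fibers of \(c_R\vert_E\), which span \(R\)) to a family of curves in \(E\) of the same \(E\)-degree, and one checks case by case using Proposition~\ref{ruledsurfacefacts} that this forces \(\phi_\ast(R) = R\). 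The one configuration needing attention is \(E \cong \P^1 \times \P^1\), where \(\phi\vert_E\) may interchange the two rulings; since these are numerically equivalent in \(X\) (the normal bundle is \((-1,-1)\), as in type (E3)), the ray \(R\) is unaffected, but if one prefers to have each ruling preserved one may replace \(\phi\) once more by \(\phi^2\).

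With \(\phi^\ast(R) = R\) in hand, Lemma~\ref{autosdescend} shows that \(\phi\) descends to an automorphism \(\psi : Y \to Y\), which moreover has positive entropy because \(c_R\) is divisorial; in particular \(\phi\) is not dynamically minimal. It then remains only to identify the subcase, which I would do by invoking the classification of divisorial contractions of smooth threefolds in Theorem~\ref{conetheorem}: the contraction \(c_R\) is of one of the types (E1)--(E5). In types (E1) and (E2) the target \(Y\) is smooth, which is precisely Case 3(a). In the remaining types (E3), (E4), (E5) the divisor \(E\) is contracted to a single point whose local analytic type is the node \(w^2+x^2+y^2+z^2=0\), the singularity \(w^2+x^2+y^2+z^3 = 0\), or the cone over the Veronese surface in \(\P^5\), respectively. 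Since \(\pi\) is an isomorphism away from \(E\) and \(X\) is smooth, \(Y\) is smooth away from this single point, so this is the unique singularity of \(Y\); this is exactly Case 3(b), which completes the proof.
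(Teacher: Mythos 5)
Your overall strategy is the same as the paper's: replace \(\phi\) by an iterate fixing \(E\), show the contracted ray \(R\) is \(\phi_\ast\)-invariant, invoke Lemma~\ref{autosdescend}, and read off Cases 3(a)/3(b) from the classification in Theorem~\ref{conetheorem}. However, the step where you dispose of the \(E \cong \P^1 \times \P^1\) case contains a genuine error. You assert that whenever \(E \cong \P^1 \times \P^1\) the two rulings are numerically equivalent in \(X\) ``as in type (E3).'' That is false: \(E \cong \P^1 \times \P^1\) with normal bundle of bidegree \((-1,-1)\) also occurs for contractions of type (E1), namely when \(\pi : X \to Y\) is the blow-up of a smooth rational curve \(C \subset Y\) with \(N_{C/Y} \cong \cO_{\P^1}(-1) \oplus \cO_{\P^1}(-1)\) (the common resolution of the two sides of an Atiyah flop). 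In that situation the two rulings necessarily span \emph{distinct} extremal rays \(R \neq R'\) of \(\NEb(X)\) --- if they were numerically equivalent, \(c_R\) would contract both rulings and hence all of \(E\) to a point, contradicting type (E1). The numerical equivalence of the rulings is an extra hypothesis that Mori's classification builds into type (E3) precisely because it does not follow from the normal bundle being \((-1,-1)\). If \(\phi\vert_E\) exchanges the rulings in this (E1) configuration, then \(\phi_\ast(R) = R' \neq R\), Lemma~\ref{autosdescend} does not apply to \(\phi\), and \(\phi\) does not descend to \(Y\): it induces an isomorphism from \(Y\) to the other blow-down \(Y'\). The same example refutes your supporting claim that a divisorial contraction of a smooth threefold is determined by its exceptional divisor: here \(X\) has two divisorial contractions with the same exceptional divisor \(E\).

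The repair is exactly the replacement you describe as optional: passing to \(\phi^2\), which preserves each ruling and hence fixes \(R\), is \emph{necessary} in this case, not a matter of preference; since the lemma only claims that some iterate descends, this costs nothing. This is how the paper argues: it splits according to whether the image of \(N_1(E) \to N_1(X)\) is \(1\)-dimensional (types (E2)--(E5), where \(R\) is automatically fixed because \(\phi_\ast\) preserves that one-dimensional image) or \(2\)-dimensional (type (E1), where \(\phi\vert_E^\ast\) either fixes or swaps the two boundary rays of \(\NEb(E)\), and one passes to \(\phi^2\) unconditionally in the latter case). A minor additional point: Proposition~\ref{ruledsurfacefacts} concerns geometrically ruled surfaces, so it does not literally cover the exceptional divisors \(\P^2\) and the quadric cone arising in types (E2), (E4), (E5); for those one should simply note that the image of \(N_1(E)\) in \(N_1(X)\) is a single ray, which settles the invariance of \(R\) without any case analysis.
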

\begin{proof}
Suppose that \(E\) is \(\phi\)-periodic. Replacing \(\phi\) by \(\phi^n\), we may assume that \(\phi(E) = E\).  The map \(\phi\vert_E : E \to E\) is an automorphism of the exceptional divisor.  If the image of \(N_1(E) \to N_1(X)\) is \(1\)-dimensional (as in cases (E2), (E3), (E4), and (E5) of Theorem~\ref{conetheorem}), then \(\phi^n\) fixes the ray \(R\) and \(\phi^n\) descends to an automorphism of \(Y\) by Lemma~\ref{autosdescend}.

If the image of \(N_1(E) \to N_1(X)\) is \(2\)-dimensional, then the restriction \(\phi\vert_E^\ast : N_1(E) \to N_1(E)\) either fixes both boundary rays on \(\NEb(E)\), or exchanges the two rays.  Replacing \(\phi\) by \(\phi^2\) if needed, we may assume that \(\phi\vert_E\) acts by the identity on \(N_1(E)\) and so fixes \(R\). By Lemma~\ref{autosdescend}, \(\phi\) descends to a positive entropy automorphism of \(Y\).  

If \(\pi\) is of type (E1) or (E2) in the classification of Theorem~\ref{conetheorem}, then \(Y\) is smooth and \(\phi\) satisfies Case 3(a) of Theorem~\ref{precisemain}.  If \(\pi\) is of type (E3), (E4), or (E5), then \(Y\) satisfies Case 3(b).
\end{proof}

\begin{remark}
When \(E\) is \(\phi\)-periodic this is essentially a step in the \(\phi\)-equivariant MMP.  However, when \(\phi\) has infinite order, there might not exist a \(\phi^\ast\)-invariant \(K_X\)-negative extremal ray.
\end{remark}

For simplicity, let us give a name to the following condition on a positive entropy automorphism \(\phi\):
\begin{itemize}
\item[(A)] There exists a divisorial contraction \(\pi : X \to Y\), with exceptional divisor \(E\), such that \(E\) is not \(\phi\)-periodic.
\end{itemize}
In this setting, we write \(E_n\) for the divisor \(\phi^n(E)\), and \(f_n = \phi^n(f) \subset E_n\) for a fiber of the map \(\pi \circ \phi^{-n} : X \to Y\) contracting \(E_n\).

\begin{remark}
If \(X\) is a smooth threefold of non-negative Kodaira dimension, every \(K_X\)-negative extremal contraction is divisorial.  The only divisors that can be contracted are those in the stable base locus of \(\abs{K_X}\), and there are only finitely many such divisors.  Consequently, Condition (A) can never hold if \(\kappa(X) \geq 0\), so this condition implies that \(X\) is uniruled. If \(X\) is uniruled but not rationally connected, then any automorphism \(\phi : X \to X\) descends to a birational automorphism of the target of the mrc fibration on \(X\) and so is imprimitive~\cite{dqzhangmmp}.   The results that follow are mostly of interest when \(X\) is rationally connected, although they hold in general.
\end{remark}

Lemmas~\ref{nefcanonical}, \ref{morifiberspace}, and \ref{periodicexceptional} show that if \(\phi\) does not satisfy Condition (A), then \(\phi\) satisfies the claims of Theorem~\ref{precisemain}. It remains to prove the theorem when \(\phi\) does satisfy Condition (A). In fact, we will show that in this case \(\phi\) must be imprimitive.  We next observe that the divisors \(\phi^n(E)\) must have nonempty intersection.
\begin{lemma}
\label{intersections}
Suppose that \(\phi : X \to X\) is an automorphism satisfying Condition (A). Then there are infinitely many values of \(n\) for which \(E_n \cap E\) is nonempty. 
\end{lemma}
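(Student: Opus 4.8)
The plan is to argue by contradiction. Suppose instead that \(E_n \cap E \neq \emptyset\) for only finitely many \(n\); I will show that this forces an infinite collection of pairwise disjoint, contractible divisors inside \(X\), contradicting the finite-dimensionality of \(N^1(X)\). The first step is to record the translation property of the family \(\{E_n\}\): since \(\phi^m\) is an automorphism, \(E_m \cap E_n = \phi^m\bigl(E \cap E_{n-m}\bigr)\), so whether two members of the family meet depends only on the difference \(n-m\) of their indices. Setting \(F = \{k \in \Z : E \cap E_k \neq \emptyset\}\), the contradiction hypothesis is precisely that \(F\) is finite.

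Next I would extract an infinite pairwise-disjoint subfamily. Let \(M = \max\{\abs{k} : k \in F\}\) and consider the arithmetic progression \(T = \{\, j(M+1) : j \geq 0 \,\}\). For distinct \(s,t \in T\), the difference \(t-s\) is a nonzero multiple of \(M+1\), hence \(\abs{t-s} > M\), so \(t-s \notin F\); by the translation property this gives \(E_s \cap E_t = \emptyset\) for all distinct \(s,t \in T\). (Here I use that the \(E_n\) are genuinely distinct, which is guaranteed because \(E\) is not \(\phi\)-periodic under Condition (A).)

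The contradiction then comes from intersection theory. Because \(E\) is the exceptional divisor of a divisorial contraction, there is a contracted fiber \(f \subset E\) with \(E \cdot f < 0\) (negativity of the exceptional divisor on its contracted curves, as in the classification of Theorem~\ref{conetheorem}); applying \(\phi^n\) gives \(f_n \subset E_n\) with \(E_n \cdot f_n = E \cdot f < 0\). For distinct \(s,t \in T\), the disjointness \(E_s \cap E_t = \emptyset\) together with \(f_t \subset E_t\) forces \(E_s \cdot f_t = 0\). Thus the intersection matrix \((E_s \cdot f_t)_{s,t \in T}\) is diagonal with nonzero diagonal entries, which makes the classes \(\{[E_t]\}_{t \in T}\) linearly independent in \(N^1(X)\): from \(\sum_t c_t [E_t] = 0\), intersecting with \(f_s\) yields \(c_s (E_s \cdot f_s) = 0\) and hence \(c_s = 0\). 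An infinite linearly independent set cannot exist in the finite-dimensional space \(N^1(X)\), giving the desired contradiction.

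I expect the main obstacle to be organizational rather than geometric. The hypothesis only asserts that \(E\) meets \(E_n\) for finitely many \(n\), not that all the \(E_n\) are pairwise disjoint, so the real content is manufacturing an infinite pairwise-disjoint subfamily — which is exactly what the difference-avoidance (arithmetic progression) step provides. The two geometric inputs, namely that each \(E_n\) carries a curve on which it is negative and that the \(E_n\) are distinct, are both immediate consequences of Condition (A); in particular, positive entropy is not used here beyond ensuring that \(\phi\) has infinite order.
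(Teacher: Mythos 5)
Your proposal is correct and follows essentially the same route as the paper: assume only finitely many intersections, use \(E_m \cap E_n = \phi^m(E \cap E_{n-m})\) to extract an infinite pairwise-disjoint subfamily (the paper does this by replacing \(\phi\) with the iterate \(\phi^N\), which is the same as your arithmetic-progression selection), and then derive a contradiction with the finite-dimensionality of \(N^1(X)\) from the diagonal intersection matrix \(E_s \cdot f_t\). The only cosmetic difference is that the paper phrases the subfamily extraction as passing to an iterate rather than indexing along a progression.
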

\begin{proof}
Suppose that \(E_n \cap E\) is nonempty for only finitely many values of \(n\).  Then there is some \(N\) for which \(E_n \cap E\) is empty for any \(n\) with \(\abs{n} \geq N\).  Replacing \(\phi\) by the iterate \(\phi^N\), we may assume that \(E_n \cap E\) is empty for all \(n\).  Then \(E_m \cap E_n = \phi^m(E \cap E_{n-m})\) is also empty for any distinct \(m\) and \(n\).  Since \(E \cdot f < 0\), we have \(E_m \cdot f_m < 0\). However, \(E_m \cdot f_n = 0\) for \(m \neq n\) because \(E_m\) and \(E_n\) are disjoint. This implies the classes of the infinitely many \(E_m\) are linearly independent in \(N^1(X)\), contradicting the finite-dimensionality of \(N^1(X)\).
\end{proof}
\begin{lemma}
\label{notpoint}
Suppose that \(\phi : X \to X\) is a positive entropy automorphism satisfying Condition (A).  Then the image of \(E\) under \(\pi : X \to Y\) is not a point.
\end{lemma}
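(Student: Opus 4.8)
The plan is to argue by contradiction: assuming \(\pi(E)\) is a point, I would show that the divisors \(E_n = \phi^n(E)\) are pairwise disjoint, directly contradicting Lemma~\ref{intersections}.

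The first step is to record the numerical rigidity of \(E\). If \(\pi(E)\) is a point, then by the classification in Theorem~\ref{conetheorem} the contraction \(\pi\) is of type (E2), (E3), (E4), or (E5); in each case \(E\) is \(\P^2\), \(\P^1\times\P^1\) with its two rulings numerically equivalent in \(X\), or a quadric cone. These are precisely the cases in which the image of \(N_1(E)\to N_1(X)\) is one-dimensional, as already isolated in the proof of Lemma~\ref{periodicexceptional}. Consequently every curve contained in \(E\) has numerical class on the single extremal ray \(R\) that \(\pi\) contracts.

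Next I would transport this to the translates \(E_n\). The morphism \(\pi\circ\phi^{-n}\) contracts exactly the curves whose class lies on the ray \(R_n\) obtained by applying \(\phi^n\) to \(R\), and its exceptional divisor is \(E_n\), which it sends to a point; in particular every curve contained in \(E_n\) has class on \(R_n\). Since a \(K_X\)-negative extremal ray determines its contraction, and hence its exceptional divisor, the fact that the \(E_n\) are pairwise distinct (Condition (A), as \(E\) is not \(\phi\)-periodic) shows that the rays \(R_n\) are pairwise distinct as well.

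The heart of the argument is then a dimension count. Suppose \(E_m\cap E_n\neq\emptyset\) for some \(m\neq n\). As \(E_m\) and \(E_n\) are distinct irreducible divisors in the smooth threefold \(X\), every component of their intersection has dimension at least \(2+2-3=1\), so \(E_m\cap E_n\) contains an irreducible curve \(\Gamma\). But \(\Gamma\subset E_m\) forces \([\Gamma]\in R_m\), while \(\Gamma\subset E_n\) forces \([\Gamma]\in R_n\); as \([\Gamma]\neq 0\) (it pairs positively with an ample class), this gives \(R_m=R_n\), a contradiction. Hence \(E_m\cap E_n=\emptyset\) for all \(m\neq n\), and in particular \(E_n\cap E=\emptyset\) for every \(n\neq 0\), contradicting Lemma~\ref{intersections}. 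I expect the only delicate points to be bookkeeping — verifying the one-dimensionality of the image of \(N_1(E)\) in cases (E2)–(E5), which the paper has already recorded, and checking that distinct exceptional divisors force distinct extremal rays — so that no genuine analytic obstacle arises; the crux is simply the observation that two divisors contracted to points cannot meet without sharing a contracted ray.
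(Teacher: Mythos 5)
Your proof is correct and is essentially the paper's own argument: both derive a contradiction from Lemma~\ref{intersections} by producing a curve inside an intersection of two translates of \(E\), whose class would then have to lie simultaneously on the two \emph{distinct} extremal rays contracted by \(\pi\) and by \(\pi\circ\phi^{-n}\). The only (harmless) differences are that you route the statement ``every curve in \(E\) has class on \(R\)'' through the classification (E2)--(E5) and the one-dimensionality of the image of \(N_1(E)\to N_1(X)\), where the paper uses directly that \(\pi(E)\) is a point so every curve in \(E\) is contracted, and that the paper replaces \(\phi\) by an iterate to arrange \(E_1\cap E\neq\emptyset\) rather than, as you do, proving all translates pairwise disjoint and contradicting Lemma~\ref{intersections} at the end.
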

\begin{proof}
By Lemma~\ref{intersections}, we may replace \(\phi\) by an iterate and assume that \(E_1\) has nonempty intersection with \(E\).  Suppose that the map \(\pi : X \to Y\) contracts \(E_1 \cap E \subset E\) to a point. Let \(C\) be a curve contained in this intersection.  Since \(\pi\) is a divisorial contraction of a ray \(R \subset \NEb(X)\) and \(C\) is contracted to a point by \(\pi\), the class \([C]\) lies on \(R\).  On the other hand, \(C\) is contained in \(E_1\) and so is contracted by \(\pi \circ \phi^{-1}\), so \([C]\) lies on \(\phi^\ast(R)\).  But \(R\) and \(\phi^\ast(R)\) are distinct rays, a contradiction. 
\end{proof}
Contractions of types (E2), (E3), (E4), and (E5) all contract a divisor to a point, so if \(\phi : X \to X\) satisfies Condition (A), the contraction \(\pi : X \to Y\) must be of type (E1), so that \(Y\) is smooth and \(\pi\) is the blow-up of a smooth curve in \(Y\).  Lemma~\ref{notpoint} provides another proof of the familiar fact that if \(M\) has no positive entropy automorphisms, then no variety obtained by blowing up a set of points in \(M\) can have a positive entropy automorphism.

Next we collect some observations about properties of the leading eigenvector of \(\phi^\ast : N^1(X) \to N^1(X)\).

\begin{lemma}[\cite{truong}]
\label{numdim}
Let \(\lambda = \lambda_1(\phi)\) be the spectral radius of \(\phi^\ast : N^1(X) \to N^1(X)\). After replacing \(\phi\) with \(\phi^{-1}\) if necessary, there exists an \(\R\)-divisor class \(D\) such that 
\begin{enumerate}
\item \(D\) is nef and \(\phi^\ast D = \lambda D\),
\item \(D\) is has numerical dimension \(1\) (i.e. \(D^2 = 0\)),
\item \(D\) is not a multiple of any rational class. 
\end{enumerate}
Moreover, all of these properties hold even after replacing \(\phi\) by any positive iterate \(\phi^n\).
\end{lemma}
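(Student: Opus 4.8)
The plan is to produce \(D\) as the Perron--Frobenius eigenvector of \(\phi^\ast\) acting on the nef cone, and then to read off properties (2) and (3) from the numerics of the dynamical degrees and the integrality of \(\phi^\ast\) on the lattice \(N^1(X)_\Z\).

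First I would establish (1). Since \(\phi\) is an automorphism, \(\phi^\ast\) carries ample classes to ample classes and hence preserves the cone \(\Nef(X) \subset N^1(X)\). This cone is closed, full-dimensional, and salient: it contains no line, since a class \(D\) with both \(D\) and \(-D\) nef satisfies \(D \cdot C = 0\) for every curve \(C\) and so \(D \equiv 0\). The Perron--Frobenius theorem for a linear map preserving a proper cone then guarantees that the spectral radius \(\lambda = \lambda_1(\phi)\) is an eigenvalue of \(\phi^\ast\) admitting a nonzero eigenvector \(D \in \Nef(X)\), which is (1). As positive entropy means \(\lambda > 1\), the class \(D\) is not numerically trivial, so its numerical dimension is at least \(1\).

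Next, for (2), write \(\lambda_2\) for the spectral radius of \(\phi^\ast\) acting on \(N^2(X) \cong N_1(X)\). Since \(\phi^\ast\) is a ring homomorphism, \(\phi^\ast(D^2) = (\phi^\ast D)^2 = \lambda^2 D^2\), so if \(D^2 \neq 0\) then \(\lambda^2\) is an eigenvalue of \(\phi^\ast\) on \(N^2(X)\), whence \(\lambda^2 \leq \lambda_2\). It therefore suffices to arrange \(\lambda^2 > \lambda_2\), and this is exactly what the choice of \(\phi\) versus \(\phi^{-1}\) accomplishes. The log-concavity of dynamical degrees gives \(\lambda_1^2 \geq \lambda_2\) and \(\lambda_2^2 \geq \lambda_1\); were both equalities we would obtain \(\lambda_1 = \lambda_1^4\), hence \(\lambda_1 = 1\), contradicting positive entropy. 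Thus one inequality is strict, and since passing from \(\phi\) to \(\phi^{-1}\) interchanges \(\lambda_1\) and \(\lambda_2\), after replacing \(\phi\) by \(\phi^{-1}\) if necessary we may assume \(\lambda^2 > \lambda_2\). This forces \(D^2 = 0\), giving (2); combined with \(D \not\equiv 0\) it says \(D\) has numerical dimension exactly \(1\). I expect this to be the main obstacle, as it is the only place where positive entropy enters essentially and it relies on the log-concavity inequalities as an external input.

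Finally, for (3), the matrix of \(\phi^\ast\) on the lattice \(N^1(X)_\Z\) is integral with determinant \(\pm 1\), so its characteristic polynomial \(p\) is monic with integer coefficients and \(p(0) = \pm 1\). Were \(\lambda\) rational it would be an integer by the rational root theorem; writing \(p(x) = (x-\lambda) q(x)\) with \(q\) monic and integral (Gauss) then gives \(\pm 1 = p(0) = -\lambda \, q(0)\), forcing \(\lambda\) to divide \(1\) and contradicting \(\lambda > 1\). Hence \(\lambda\) is irrational. If \(D\) were a real multiple of a rational class \(D_0\), then \(\phi^\ast D_0 = \lambda D_0\) with \(D_0\) rational and nonzero would force \(\lambda \in \Q\), a contradiction; this proves (3). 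For the last assertion, \((\phi^n)^\ast = (\phi^\ast)^n\) has spectral radius \(\lambda^n\) on \(N^1(X)\) and fixes the same \(D\) with eigenvalue \(\lambda^n\), while the conditions \(D^2 = 0\) and irrationality of the eigenvalue are intrinsic; moreover \(\lambda^{2n} > \lambda_2^n\), so all three properties persist under iteration.
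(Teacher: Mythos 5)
Your proof is correct, and parts (1), (3), and the statement about iterates are argued exactly as in the paper (Perron--Frobenius on the salient cone \(\Nef(X)\); integrality and \(\det = \pm 1\) forcing irrationality of \(\lambda\) via the rational root theorem, hence \(D\) is not proportional to a rational class; the same arguments applied to \((\phi^n)^\ast = (\phi^\ast)^n\)). The difference is in how you close step (2), which is the heart of the lemma. Both you and the paper use the observation that \(\phi^\ast(D^2) = \lambda^2 D^2\), so \(D^2 \neq 0\) would force \(\lambda^2 \leq \lambda_2\), and both use the adjointness of \(\phi^\ast\) on \(N^1\) with \((\phi^{-1})^\ast\) on \(N^2\) (your ``inversion swaps \(\lambda_1\) and \(\lambda_2\)''). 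But the paper never invokes log-concavity: it applies the square-eigenvector observation simultaneously to \(\phi\) and \(\phi^{-1}\), assumes both \(D^2 \neq 0\) and \((D^\prime)^2 \neq 0\), and chains the resulting inequalities through adjointness to get \(\lambda_1(\phi) \geq \lambda_1(\phi)^4\), a contradiction; so at least one of the two squares vanishes, and one replaces \(\phi\) by \(\phi^{-1}\) accordingly. Your route instead imports the Khovanskii--Teissier/Gromov log-concavity inequalities \(\lambda_1^2 \geq \lambda_2\), \(\lambda_2^2 \geq \lambda_1\) as a black box to get a strict inequality \(\lambda^2 > \lambda_2\) after a possible inversion. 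This works, but carries a hidden dependency you should acknowledge: you need log-concavity for the spectral radii on the \emph{numerical} groups \(N^1(X)\) and \(N^2(X)\), not merely for cohomologically defined dynamical degrees, since \(D^2\) lives in \(N^2(X)\). For automorphisms this identification (via Perron--Frobenius on the pseudoeffective cones in \(N^k\) together with Khovanskii--Teissier applied to the degree sequences \((\phi^m)^\ast H^k \cdot H^{n-k}\)) is true and standard --- it is essentially the content of the cited work of Truong --- but it is a genuine theorem, not a formality. What the paper's argument buys is complete self-containedness: only Perron--Frobenius and adjointness are needed, with no appeal to intersection-theoretic inequalities. What your argument buys is brevity and a cleaner conceptual statement (``arrange \(\lambda_1^2 > \lambda_2\), then \(D^2 = 0\) is forced''), provided the log-concavity input is properly sourced.
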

\begin{proof}
The map \(\phi^\ast : N^1(X)_\Z \to N^1(X)_\Z\) and its inverse \((\phi^{-1})^\ast : N^1(X)_\Z \to N^1(X)_\Z\) are both defined by integer matrices, and so both have determinant \(\pm 1\).  If \(\phi^\ast\) has an eigenvalue of norm greater than \(1\), then it also has one of norm less than \(1\), which implies that \(\phi^{-1}\) is of positive entropy as well; we may thus later replace \(\phi\) with \(\phi^{-1}\) and retain the assumption of positive entropy. 

The map \(\phi^\ast\) preserves the strongly convex cone \(\Nef(X) \subset N^1(X)\), and by a standard form of the Perron-Frobenius theorem \(\lambda\) is in fact a real eigenvalue of \(\phi^\ast\), and there exists a nef class \(D\) with \(\phi^\ast D = \lambda D\)~\cite{birkhoff}. Let \(\lambda^\prime = \lambda_1(\phi^{-1})\) be the spectral radius of \((\phi^{-1})^\ast : N^1(X) \to N^1(X)\).  By the same argument, there is a nef class \(D^\prime\) with \((\phi^{-1})^\ast(D^\prime) = \lambda^\prime D^\prime\).

Suppose that \(D^2\) and \((D^{\prime})^2\) are both nonzero.  Then \(D^2\) is an eigenvector of \(\phi^\ast : N^2(X) \to N^2(X)\) with eigenvalue \(\lambda_1(\phi)^2\), and \((D^\prime)^2\) is an eigenvector of \((\phi^{-1})^\ast : N^2(X) \to N^2(X)\) with eigenvalue \(\lambda_1(\phi^{-1})^2\). This yields the two inequalities \(\lambda_1(\phi)^2 \leq \lambda_2(\phi)\) and \(\lambda_1(\phi^{-1})^2 \leq \lambda_2(\phi^{-1})\).  The maps \(\phi^\ast : N^1(X) \to N^1(X)\) and \((\phi^{-1})^\ast : N^2(X) \to N^2(X)\) are adjoint, hence \(\lambda_1(\phi) = \lambda_2(\phi^{-1})\) and \(\lambda_2(\phi) = \lambda_1(\phi^{-1})\), and so 
\[
\lambda_1(\phi) = \lambda_2(\phi^{-1}) \geq \lambda_1(\phi^{-1})^2 = \lambda_2(\phi)^2 \geq \lambda_1(\phi)^4.
\]
By assumption \(\lambda_1(\phi) > 1\), so this is a contradiction. It must be that either \(D^2= 0\) or \((D^\prime)^2 = 0\). Replacing \(\phi\) by \(\phi^{-1}\) if needed, we can assume that \(D^2 = 0\).

Because the determinant of \(\phi^\ast\) is \(\pm 1\),  the only possible rational roots of the characteristic polynomial \(\det(\phi^\ast - \lambda I)\) are \(\lambda = 1\) or \(-1\).  The leading eigenvalue is a real number greater than \(1\), so it must be irrational, and the eigenvector \(D\) is not a multiple of a rational class.  

The same arguments apply to \(\phi^n\) for any positive integer \(n\), and so we may freely replace \(\phi\) by suitable iterate in later proofs and still assume that \(\lambda\) is irrational, while the eigenvector \(D\) remains unchanged.  In particular, \(\lambda^n\) is irrational for all nonzero \(n\).
\end{proof}
Observe that Lemma~\ref{numdim} makes use of the fact that \(\phi\) is an automorphism, and not merely a pseudoautomorphism: in the latter case, the action of \(\phi^\ast\) is not always compatible with intersections, and \(D^2\) is not necessarily an eigenvector. 

Taken together, Lemmas~\ref{intersections},~\ref{notpoint}, and~\ref{numdim}, show that Condition (A) is equivalent to the following condition, up to replacing \(\phi\) with \(\phi^{-1}\):
\begin{itemize}
\item[(A$^\prime$)] There exists a divisorial contraction \(\pi : X \to Y\) with \(Y\) smooth.  The exceptional divisor \(E\) is a ruled surface over a smooth curve, and \(E\) is not \(\phi\)-periodic.  There is a nef eigenvector \(D\) of \(\phi^\ast : N^1(X) \to N^1(X)\) with \(D^2 = 0\) and irrational eigenvalue \(\lambda\).
\end{itemize}

\begin{lemma}
\label{rationality}
Suppose that \(\phi : X \to X\) is an automorphism satisfying Condition ($A^\prime$).  After a suitable rescaling, the class \(D \vert_E\) is rational.  Moreover, \((D \cdot E \cdot E_n)_X = 0\) for every nonzero \(n\).
\end{lemma}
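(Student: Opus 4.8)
The plan is to prove the two assertions separately, extracting rationality of $D\vert_E$ from the Picard rank~$2$ structure of the ruled surface $E$, and then combining this with the irrationality of $\lambda$ to kill the triple intersections.

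First I would establish that $D\vert_E$ is rational. Since $\phi$ satisfies Condition~($A'$), the divisor $E$ is a ruled surface over a smooth curve, so by Proposition~\ref{ruledsurfacefacts} the cone $\Nef(E)$ is a rational polyhedral cone in the two\nobreakdash-dimensional space $N^1(E)$, with rational boundary rays $[f]$ and $\beta$. Now $D\vert_E$ is nef, being the restriction of the nef class $D$, and
\[
(D\vert_E \cdot D\vert_E)_E = (D \cdot D \cdot E)_X = (D^2 \cdot E)_X = 0,
\]
since $D^2 = 0$ by Lemma~\ref{numdim}. A nef class with vanishing self\nobreakdash-intersection on a surface cannot be ample, so $D\vert_E$ lies on the boundary of $\Nef(E)$; being on one of the two rational extremal rays, it is a real multiple of a rational class, and after rescaling $D$ we may assume $D\vert_E \in N^1(E)_\Q$.

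For the vanishing of the triple intersections, I would set $a_n = (D \cdot E \cdot E_n)_X$. On the one hand, invariance of intersection numbers under the automorphism $\phi^n$, combined with the eigenvector relation $(\phi^n)^\ast D = \lambda^n D$ and the bookkeeping $(\phi^n)^\ast E = E_{-n}$, $(\phi^n)^\ast E_n = E$ (which follow from the convention $E_m = \phi^m(E) = (\phi^{-m})^\ast E$), gives
\[
a_n = \big((\phi^n)^\ast D \cdot (\phi^n)^\ast E \cdot (\phi^n)^\ast E_n\big)_X = \lambda^n\,(D \cdot E_{-n} \cdot E)_X = \lambda^n a_{-n}.
\]
On the other hand, writing $a_n = (D\vert_E \cdot [E_n\vert_E])_E$, the class $D\vert_E$ is rational by the first part while $[E_n\vert_E] = [\cO_X(E_n)\vert_E]$ is an integral class on $E$; since the intersection pairing on the smooth surface $E$ is $\Q$\nobreakdash-valued, $a_n \in \Q$ for every $n$.

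To conclude I would invoke Lemma~\ref{numdim} once more: the eigenvalue $\lambda$, hence every power $\lambda^n$ with $n \neq 0$, is irrational. If $a_{-n} \neq 0$, the identity $a_n = \lambda^n a_{-n}$ would force $\lambda^n = a_n/a_{-n} \in \Q$, a contradiction; thus $a_{-n} = 0$, and therefore $a_n = \lambda^n a_{-n} = 0$ for all nonzero $n$. The substantive input is the first step: although $D$ is an irrational class on $X$, restriction to the rigid Picard rank~$2$ surface $E$ forces $D\vert_E$ onto a rational ray, and it is precisely the clash between this rationality and the irrationality of $\lambda^n$ that annihilates the intersections. I expect the only delicate point to be the index bookkeeping $(\phi^n)^\ast E = E_{-n}$ and $(\phi^n)^\ast E_n = E$, which must be pinned down carefully against the chosen convention for $E_n$.
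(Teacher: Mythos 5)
Your proposal is correct and follows essentially the same route as the paper: both arguments derive rationality of \(D\vert_E\) from nefness, \((D\vert_E)^2 = 0\), and the rational polyhedrality of \(\Nef(E)\) for a ruled surface, and both then compare the rational number \((D \cdot E \cdot E_n)_X = (D\vert_E \cdot E_n\vert_E)_E\) with the pullback computation giving \(\lambda^n (D \cdot E_{-n} \cdot E)_X\), using irrationality of \(\lambda^n\) to force vanishing. Your identity \(a_n = \lambda^n a_{-n}\) is exactly the paper's two-ways computation, with the index bookkeeping \((\phi^n)^\ast E = E_{-n}\), \((\phi^n)^\ast E_n = E\) handled the same way.
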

\begin{proof}
Since \(D\) is nef, so too is \(D\vert_E\).  We have \((D\vert_E \cdot D\vert_E)_E = (D \cdot D \cdot E)_X = 0\) by Lemma~\ref{numdim}, which shows that \(D\vert_E\) is not ample and hence lies on the boundary of \(\Nef(E)\).  The first claim is then a consequence of Proposition~\ref{ruledsurfacefacts}, because the nef cone of a ruled surface is bounded by rational classes.  We now assume that \(D\vert_E\) is rational.

For the second claim, we compute \((D \cdot E \cdot E_n)_X\) in two different ways:
\begin{align}
(D \cdot E \cdot E_n)_X &= (D\vert_E \cdot E_n \vert_E)_E \\ (D \cdot E \cdot E_n)_X &= \left( (\phi^\ast)^n(D) \cdot (\phi^\ast)^n(E) \cdot (\phi^\ast)^n(E_n)\right)_X \nonumber \\ 
  &= (\lambda^n D \cdot E_{-n} \cdot E)_X = \lambda^n(D \vert_E \cdot E_{-n}\vert_E)_E.
\end{align}
The right-hand side of (1) is the intersection of two \(\Q\)-Cartier divisors on a smooth surface, hence rational.  The right-hand side of (2) is an irrational multiple of a rational number.  The only possibility is that \((D \cdot E \cdot E_n)_X = 0\).
\end{proof}

\begin{lemma}
\label{hodgeindex}
Suppose that \(S\) is a smooth projective surface, and that \(D\) is a nonzero nef class in \(N^1(S)\).  The set of rays in \(N^1(S)\) represented by an irreducible curve \(C\) with \(D \cdot C = 0\) is finite.
\end{lemma}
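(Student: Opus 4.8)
The plan is to reduce everything to the Hodge Index Theorem together with the elementary fact that a finite-dimensional Euclidean space contains only finitely many pairwise-obtuse directions. Since \(D\) is nef and nonzero we have \(D^2 \geq 0\), and the intersection form on \(N^1(S)\) has signature \((1,\rho(S)-1)\). The first step is to record the shape of the form on the hyperplane \(D^\perp\): if \(D^2 > 0\) the restriction to \(D^\perp\) is negative definite, while if \(D^2 = 0\) it is negative semidefinite with radical exactly \(\R D\). In either case, any irreducible curve \(C\) with \(D \cdot C = 0\) has \([C] \in D^\perp\) and hence \(C^2 \leq 0\); moreover \(C^2 = 0\) forces \([C]\) into the radical, so \([C] \in \R D\) and \(C\) spans the single ray \(\Ra[D]\). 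Thus it suffices to bound the number of rays spanned by irreducible curves \(C\) with \(D \cdot C = 0\) and \(C^2 < 0\).

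For the second step I would pass to the quotient \(\bar W = D^\perp/\mathrm{rad}\), on which the form is negative definite, and equip it with the positive definite form \(q(x,y) = -x\cdot y\). An irreducible curve \(C\) with \(D\cdot C = 0\) and \(C^2 < 0\) has nonzero image \(\bar C\) with \(q(\bar C,\bar C) = -C^2 > 0\). For two distinct such curves \(C\) and \(C^\prime\) one has \(C \cdot C^\prime \geq 0\), since distinct irreducible curves meet properly, and because both are orthogonal to \(D\) this intersection descends to give \(q(\bar C, \bar C^\prime) = -C\cdot C^\prime \leq 0\). In particular no two distinct such curves have positively proportional images, as that would force a negative intersection, so the vectors \(\bar C\) lie on pairwise distinct rays and are pairwise non-acute in the Euclidean space \((\bar W, q)\).

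The last step is the finiteness of such a configuration: after normalizing to unit vectors for \(q\), the images become distinct points on the unit sphere of a finite-dimensional Euclidean space with pairwise non-positive inner products, and an accumulation point would yield two distinct unit vectors at arbitrarily small angle, hence with strictly positive inner product, a contradiction. Therefore there are only finitely many irreducible curves \(C\) with \(D\cdot C = 0\) and \(C^2 < 0\), and in particular finitely many rays; adjoining the single ray \(\Ra[D]\) arising from the case \(C^2 = 0\) completes the argument. The only point that needs care is the case \(D^2 = 0\), where \(D^\perp\) is merely negative semidefinite, so one must quotient by the radical before invoking the Euclidean estimate; the combinatorial input itself is entirely elementary and is where I expect essentially no difficulty.
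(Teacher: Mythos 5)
Your proof is correct, and it rests on the same underlying mechanism as the paper's, but it is genuinely more self-contained: the paper's own proof is essentially a citation, invoking \cite[Lemma 3.1]{totaromoving} for the statement that the number of irreducible curves with \(D \cdot C = 0\) whose class is not on the ray \(\Ra [D]\) is at most \(2(\rho(S)-2)\), and then adjoining the ray \(\Ra [D]\) itself to get at most \(2\rho(S)-3\) rays. Your decomposition into curves with \(C^2 = 0\) (which, as you correctly argue, must lie on \(\Ra [D]\), and can only occur when \(D^2 = 0\)) versus curves with \(C^2 < 0\) is the same splitting as ``on the ray / off the ray,'' and the heart of both arguments is identical: the Hodge index theorem makes \(D^\perp\) modulo its radical negative definite, while distinct irreducible curves have non-negative intersection, so the images of such curves are distinct, pairwise non-acute directions in a Euclidean space. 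The difference is the final step: you conclude finiteness of a pairwise-obtuse set of distinct unit vectors by compactness of the sphere, which proves finiteness but gives no count, whereas the cited lemma (equivalently, the standard linear-algebra fact that a set of distinct directions in \(\R^n\) with pairwise non-positive inner products has at most \(2n\) elements) yields the explicit bound \(2\rho(S)-3\) recorded in the paper. Since the lemma is only ever used qualitatively, your argument is a complete substitute for the citation; if you wanted the quantitative version, replacing the compactness step by that counting fact recovers it with no extra work.
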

\begin{proof}
It is an easy consequence of the Hodge index theorem that the number of irreducible curves with \(D \cdot C = 0\) and for which \([C]\) is not on the ray \(\Ras D\) is bounded by \(2 (\rho(S) - 2)\)~\cite[Lemma 3.1]{totaromoving}.  Together with the ray \(\Ras D\) itself, which may or may not be represented by a curve, this gives at most \(2 \rho(S) - 3\) rays in \(N^1(S)\) represented by curves with \(D \cdot C = 0\).
\end{proof}
We retain the notation that if \(g : S \to C\) is a ruled surface, with fiber \(f\), then \(\alpha\) is a generator of the bounding ray of \(\NEb(S)\) not spanned by \([f]\), and \(\beta\) is the a generator on the second bounding ray of \(\Nef(S)\).

\begin{lemma}
\label{exceptionalintersections} 
Suppose that \(\phi : X \to X\) is an automorphism satisfying Condition (A$^\prime$).  Then the ruled surface \(E\) is of type (R2) in the classification of Proposition~\ref{ruledsurfacefacts}, so that \(\alpha = \beta\).  The restriction \(D \vert_E\) is a nonzero multiple of \(\alpha\).  For every nonzero \(n\), the restriction \([E_n \vert_E]\) is a (possibly zero) multiple of \(\alpha\).
\end{lemma}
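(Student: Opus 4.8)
\emph{Overview.} Since $D$ is nef with $D^2=0$ (Lemma~\ref{numdim}) and $D\vert_E$ is rational (Lemma~\ref{rationality}), the class $D\vert_E$ is a rational nef class on the ruled surface $E$ with $(D\vert_E)^2=(D\cdot D\cdot E)_X=0$. Writing $D\vert_E=s[f]+t\beta$ in the cone $\Nef(E)=\Ra[f]+\Ra\beta$ of Proposition~\ref{ruledsurfacefacts} and expanding, I get $(D\vert_E)^2=2st\,([f]\cdot\beta)+t^2\beta^2$, where $[f]\cdot\beta>0$ by the Hodge index theorem. In type (R1) one has $\beta^2>0$, so this forces $t=0$ and $D\vert_E\in\Ra[f]$; in type (R2) one has $\beta=\alpha$ with $\alpha^2=0$, so it forces $s=0$ or $t=0$, i.e. $D\vert_E\in\Ra[f]$ or $D\vert_E\in\Ra\alpha$. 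Thus the lemma reduces to excluding the two degenerate possibilities $D\vert_E\in\Ras[f]$ and $D\vert_E=0$: once these are gone, $E$ is necessarily of type (R2), $\alpha=\beta$, and $D\vert_E$ is a nonzero multiple of $\alpha$.

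\emph{Excluding the fiber ray.} Suppose $D\vert_E=c[f]$ with $c>0$. Then $D\cdot f=(D\vert_E\cdot[f])_E=0$, and since $\phi^\ast D=\lambda D$ I get $D\cdot f_n=\lambda^n(D\cdot f)=0$ for every $n$. Consequently $D\vert_{E_n}$ is a nef class on the ruled surface $E_n$ with $(D\vert_{E_n})^2=0$ and $(D\vert_{E_n}\cdot[f_n])_{E_n}=D\cdot f_n=0$; as the second boundary ray of $\NEb(E_n)$ meets $[f_n]$ positively, the dichotomy above forces $D\vert_{E_n}\in\Ra[f_n]$, and in fact $D\vert_{E_n}=c_n[f_n]$ with $c_n\neq0$, since pulling back by the isomorphism $\phi^n\colon E\to E_n$ gives $(\phi^n)^\ast(D\vert_{E_n})=\lambda^n c[f]\neq0$. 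By Lemma~\ref{intersections} there is some $n\neq0$ with $E\cap E_n$ nonempty, and Lemma~\ref{rationality} gives $(D\cdot E\cdot E_n)_X=0$. Reading this on $E$ gives $c\,([f]\cdot E_n\vert_E)_E=0$, so every component of the effective curve $E\cap E_n$ is a fiber of $E$; reading it on $E_n$ gives $c_n\,([f_n]\cdot E\vert_{E_n})_{E_n}=0$, so every component is also a fiber of $E_n$. But a fiber of $E$ has class on the contracted ray $R$, while a fiber of $E_n$ has class on $R_n=\phi_\ast^nR$; these rays are distinct, for $R_n=R$ would make $\phi^n$ descend through $\pi$ by Lemma~\ref{autosdescend} and force $E$ to be $\phi$-periodic. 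A nonzero curve class cannot lie on two distinct rays, a contradiction, so $D\vert_E\notin\Ras[f]$.

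\emph{The vanishing case and the remaining claim.} The case $D\vert_E=0$ is the main obstacle, since it is the one point where the ruled-surface dichotomy gives no leverage: here $D\vert_E=0$ propagates to $D\vert_{E_n}=0$ for all $n$ (again by pulling back along $\phi^n$), so the fiber argument above does not apply, and one checks that it is also consistent with the positivity pairing of the dominant eigenvectors of $\phi^\ast$ and $\phi_\ast$. My plan is to exclude it using Lemma~\ref{numdim}\,(3): the vanishing $D\vert_E=0$ gives $D=\pi^\ast D_Y$ with $D_Y$ nef, $D_Y^2=0$, and $D_Y\cdot C=0$ on the blown-up center $C$, and the classes of all curves contained in the divisors $E_n$ span a $\phi_\ast$-invariant, rationally defined subspace $U\subseteq N_1(X)$ with $D\in U^\perp$; showing that the infinitude of the $E_n$ forces $U^\perp$ to be a line would make $D$ a multiple of a rational class, contradicting the irrationality of $\lambda$. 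Establishing that $U$ has codimension one is the delicate step. Granting that $D\vert_E\neq0$, the surface $E$ is of type (R2), $\alpha=\beta$, and $D\vert_E=c\alpha$ with $c\neq0$. Finally, for every nonzero $n$ Lemma~\ref{rationality} gives $(D\vert_E\cdot E_n\vert_E)_E=(D\cdot E\cdot E_n)_X=0$, hence $\alpha\cdot E_n\vert_E=0$; because $\rho(E)=2$ and $\alpha^2=0$, the orthogonal $\alpha^\perp\subset N^1(E)$ equals the line $\R\alpha$, so the effective class $[E_n\vert_E]$ is a (possibly zero) multiple of $\alpha$.
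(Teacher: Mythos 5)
Your reduction to the two degenerate cases, your exclusion of the fiber ray, and your final step are correct and essentially coincide with the paper's own argument: the paper runs the same trichotomy ($D\vert_E=0$, $D\vert_E\in\Ras[f]$, $D\vert_E$ a nonzero multiple of $\beta$), and its treatment of the fiber case is the same as yours --- components of $E\cap E_n$ would be fibers of both rulings, hence would lie on the two distinct extremal rays $R$ and $\phi^n_\ast R$, which is absurd. The genuine gap is the case $D\vert_E=0$, which you explicitly leave open. Your proposed route --- that the span $U\subseteq N_1(X)$ of classes of curves contained in the $E_n$ is rationally defined and $\phi_\ast$-invariant with $D\in U^\perp$, so that if $U$ had codimension one then $U^\perp=\R D$ would be a rational line, contradicting Lemma~\ref{numdim}(3) --- founders exactly where you say it does: nothing in Condition (A$^\prime$) gives any a priori lower bound on $\dim U$, and there is no evident way to force $U^\perp$ to be a line (the whole space $N^1(X)$ is itself rationally defined, $\phi^\ast$-invariant, and contains $D$, so invariance and rationality alone prove nothing). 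Since this is one of the three cases, the proof is incomplete as it stands.

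For the record, the paper closes this case by a different mechanism, which is the one ingredient missing from your write-up. If $D\vert_E=0$ then $D\cdot E=0$ in $N^2(X)$, and applying $(\phi^\ast)^n$ gives $D\cdot E_n=0$ for all $n$ (this is your ``propagation'' step). Now let $H$ be a very general member of a very ample linear system. Each $E_n$ is contractible, hence is the unique effective divisor on its ray, so the classes $[E_n]$ lie on infinitely many distinct rays of $N^1(X)$; by the Grothendieck--Lefschetz theorem the restriction $N^1(X)\to N^1(H)$ is injective, so the classes $[E_n\vert_H]$ lie on distinct rays of $N^1(H)$ as well; by Bertini each $E_n\cap H$ is an irreducible curve; and
\[
(D\vert_H\cdot E_n\vert_H)_H=(D\cdot E_n\cdot H)_X=0,
\]
while $D\vert_H$ is nef and nonzero because $D$ is nef and nonzero and $H$ is ample. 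This contradicts Lemma~\ref{hodgeindex}, which says that on a smooth projective surface a nonzero nef class can be orthogonal to only finitely many rays represented by irreducible curves. Splicing this argument into your third paragraph (in place of the sketched $U^\perp$ strategy) would make your proof complete and essentially identical to the paper's.
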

\begin{proof}
In light of Lemma~\ref{rationality}, in what follows we always assume that \(D\) is nef but not ample and is normalized so that \(D \vert_E\) is a rational class. There are three cases, which we treat separately: \(D \vert_E =  0\); \(D \vert_E\) is a nonzero multiple of a fiber \([f]\) of \(\pi\vert_E\); \(D \vert_E\) is a nonzero multiple of the nef boundary class \(\beta\).  We will see that the first two of these are impossible.

If \(D \vert_E = 0\), then \(D \cdot E = 0\) in \(N^2(X)\), and so \((\phi^\ast)^n(D \cdot E_n) = \lambda^n (D \cdot E) = 0\), which implies that \(D \vert_{E_n} = 0\) for any \(n\).  Let \(H \subset X\) be a very general member of a very ample linear system.  Each divisor \(E_n\) is contractible, and so is the unique effective divisor with class on the ray \(\Ras [E_n] \subset N^1(X)\).   The restriction map \(N^1(X) \to N^1(H)\) is injective by the Grothendieck-Lefschetz theorem, so the classes \(E_n\vert_H\) lie on distinct rays in \(N^1(H)\) as well.  As \(D\) is nef and nonzero and \(H\) is ample, \(D \vert_H\) is nef and nonzero, and we then compute
\[
(D\vert_H \cdot E_n\vert_H)_H = (D \cdot E_n \cdot H)_X = 0.
\]
This shows that \(D\vert_H\) vanishes on the infinitely many classes \(E_n\vert_H\).   By Bertini's theorem, since \(H\) is very general, each intersection \(E_n \cap H\) is an irreducible curve.  Since the rays \([E_n\vert_H]\) are distinct and represented by irreducible curves, this contradicts Lemma~\ref{hodgeindex}.

Suppose next that \(D \vert_E\) lies on \(\Ras [f]\).  The restriction \(E_1 \vert_E\) can be assumed nonzero by Lemma~\ref{intersections}, and \(E_1\vert_E\) is an effective class with \((D\vert_E \cdot E_1\vert_E)_E = 0\) by Lemma~\ref{rationality}.  It must be that \([E_1 \vert_E]\) lies on \(\Ras [f]\) as well. As before, 
\[
D \cdot E_1 = \phi_\ast((\phi^\ast)(D \cdot E_1)) = \phi_\ast(\phi^\ast D \cdot E) = \lambda \phi_\ast(D \cdot E).
\]
Since \(D \vert_E\) is on the ray \(\Ras [f]\), the restriction \(D \vert_{E_1}\) is on the ray \(\Ras [f_1] \subset N_1(E_1)\).  Then \((D\vert_E \cdot E_1 \vert_E)_E = (D \cdot E \cdot E_1)_X = 0\), so the restriction \([E \vert_{E_1}]\) must on the ray \(\Ras [f_1]\).  The only curves on a ruled surface numerically equivalent to a fiber are fibers, so any curve contained in \(E_1 \cap E\) is both a fiber of \(E\) and a fiber of \(E_1\).  But since the rays corresponding to the divisorial contractions of \(E\) and \(E_1\) are distinct, this is impossible.

The only remaining possibility is that \(D \vert_E\) is a nonzero multiple of \(\beta\).  For a ruled surface of type (R1), the class \(\beta\) has \(\beta^2 > 0\), but \((D\vert_E \cdot D\vert_E)_E = 0\), and so \(E\) must be of type (R2), with \(\alpha = \beta\).  At last, \(0 = (D \cdot E \cdot E_n)_X = (D\vert_E \cdot E_n\vert_E)_E\).  Since \(D\vert_E\) is proportional to \(\beta = \alpha\),  \([E_n\vert_E]\) must be a multiple of \(\alpha\) for every nonzero \(n\).
\end{proof}

In Example~\ref{productex}, we have \([E_1\vert_E] \in \Ras \alpha\). So we should not hope to prove that this subcase is impossible; instead we show that if \(X\) is of this type, the automorphism \(\phi : X \to X\) is imprimitive.  Indeed, in this and similar examples, the curves contained in \(E_1 \cap E\) are all fibers of a map to a surface.  We will show that this is always the case: if \(\xi\) is a curve in \(E_1 \cap E\), then there exists a rational fibration \(X \rat S\) with \(\xi\) as a fiber. 

\section{Some semilocal dynamics}
\label{localdynamics}

We now pause to prove some local dynamical results, dealing with the behavior of an automorphism \(\phi : X \to X\) in a formal neighborhood of a \(\phi\)-invariant curve, which is not necessarily fixed pointwise. 

\thmseparateiterates

\begin{example}
Consider the variety \(X = \P^2 \times \Gamma\), where \(\Gamma\) is an elliptic curve.  Let \(M : \P^2 \to \P^2\) be an infinite order automorphism of \(\P^2\) with isolated fixed points, and let \(\psi : \Gamma \to \Gamma\) be a non-torsion translation on \(\Gamma\), so that \(\phi = M \times \psi : X \to X\) is an infinite order automorphism.  The map \(M : \P^2 \to \P^2\) has at least one fixed point \(p\), and the curve \(C = p \times \Gamma\) is invariant under \(\phi\), but does not contain any fixed points.  If \(L \subset \P^2\) is a general line through \(p\), then \(E = L \times C\) is a divisor containing \(C\) which has infinite order under \(\phi\).  The divisors \(\phi^n(E)\) are all separated by the single blow-up \(\pi : \Bl_C X \to X\).
\end{example}

We first sketch the proof the two-dimensional analog of Theorem~\ref{separateiterates}, which suggests the strategy of the full proof.  A sharper two-dimensional statement in which it is not necessary to replace \(\phi\) by an iterate is due to Arnold, but the proof does not readily generalize to higher-dimensional settings in which \(\phi\) has no fixed points~\cite{arnold}.  The results of this section roughly extend Arnold's observation to threefolds, at the expense of requiring that \(\phi\) be replaced by an iterate.
\begin{lemma}
\label{surfacestatement}
Suppose that \(\phi : X \to X\) is an automorphism of a smooth projective surface, and that \(p\) is a fixed point of \(\phi\). Let \(C \subset X\) be a curve, smooth at \(p\) and with infinite order under \(\phi\).  After replacing \(\phi\) by some iterate, there exists a birational map \(\pi : Y \to X\) such that the strict transforms of the curves \(\phi^n(C)\) do not intersect above \(p\).
\end{lemma}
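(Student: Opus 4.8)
The plan is to reduce the statement to the local action of \(\phi\) on tangent directions at the fixed point \(p\), and to resolve the remaining, tangential, cases by a single finite tower of blow-ups following the common branch of the curves \(\phi^n(C)\). First I would record that every \(\phi^n(C)\) passes through \(p\) and is smooth there, since \(\phi(p) = p\) and \(\phi\) is an automorphism; because \(C\) has infinite order, the \(\phi^n(C)\) are genuinely distinct, so there is something to prove. Each \(\phi^n(C)\) then determines a tangent direction \(v_n = [T_p\phi^n(C)] \in \P(T_pX) \cong \P^1\), and \(v_n = g^n(v_0)\), where \(g = \P(d\phi_p)\) is the Möbius transformation induced by the derivative. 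If the orbit \(\{v_n\}\) is infinite then the \(v_n\) are pairwise distinct (a coincidence \(v_n = v_m\) forces \(g^{n-m}(v_0) = v_0\), hence a finite orbit), and a single blow-up \(\pi : \Bl_p X \to X\) already separates the strict transforms above \(p\): each \(\widetilde{\phi^n(C)}\) meets the exceptional \(\P^1\) only at \(v_n\), so distinct strict transforms are disjoint over \(p\). No iterate is needed in this case.

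The substance of the proof is the case in which the orbit \(\{v_n\}\) is finite. Replacing \(\phi\) by an iterate, I may assume \(g(v_0) = v_0\), so that \(v_0\) is an eigendirection of \(d\phi_p\) and all of the \(\phi^n(C)\) are mutually tangent at \(p\). Applying \(\phi^{-m}\), which fixes \(p\) and preserves local contact orders, reduces the problem to producing a bound \(\kappa_j \le N\), \emph{independent of \(j\)}, on the contact order at \(p\) of \(C\) and \(\phi^j(C)\). Granting such a uniform bound, I would blow up the finite chain of \(\phi\)-fixed infinitely near points \(p = p_0, p_1, \dots\) lying along the common branch of the \(\phi^n(C)\), up to the first level at which the induced tangent directions acquire an infinite orbit under the relevant derivative. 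At that level the strict transforms meet the last exceptional curve in pairwise distinct points, and they meet the rest of the fiber \(\pi^{-1}(p)\) nowhere, so on the resulting surface \(Y\) they are pairwise disjoint over \(p\).

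To get the uniform bound I would pass to formal local coordinates \((x,y)\) adapted to the eigendirection \(v_0\), taken to be the \(x\)-axis, and straightened so that \(\{y = 0\}\) is the formal local \(\phi\)-invariant branch tangent to \(v_0\); then \(\phi(x,y) = (\lambda x + \cdots,\, \mu y + \cdots)\) with \(\phi(\{y = 0\}) = \{y = 0\}\). Writing \(C\) as a graph \(y = a_{j_0}x^{j_0} + \cdots\) with \(a_{j_0} \neq 0\), a direct computation shows that the leading deviation of \(\phi^j(C)\) from the invariant branch is scaled by \((\mu/\lambda^{j_0})^j\); hence the local intersection multiplicity of \(C\) and \(\phi^j(C)\) at \(p\) equals \(j_0\) for every \(j\), \emph{unless} \(\mu/\lambda^{j_0}\) is a root of unity. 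If it is a root of unity I replace \(\phi\) by the iterate killing it and repeat the computation at the next order. This recursion must terminate, because the ratios controlling successive orders are \(\mu/\lambda^{k}\) for \(k = j_0, j_0 + 1, \dots\), and their successive quotients are powers of \(\lambda^{-1}\); they cannot all be roots of unity unless both eigenvalues \(\lambda\) and \(\mu\) are themselves roots of unity.

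The main obstacle is precisely this termination, together with the residual quasi-unipotent case remaining at its end, in which \(\lambda\) and \(\mu\) are both roots of unity, so that after a further iterate \(d\phi_p\) is unipotent. If \(d\phi_p\) is a single nontrivial Jordan block then \(g\) is parabolic, every direction other than its unique fixed one has infinite orbit, and we are back in the separated case after one blow-up. The genuinely delicate situation is when \(d\phi_p\) becomes the identity: the linear tangent-direction bookkeeping then sees nothing, and one must instead track the first nonlinear terms of \(\phi\) along the branch of \(C\) and check that the contact orders remain uniformly bounded after passing to an iterate. I expect controlling this tangent-to-the-identity regime to be the technically hardest point; it is exactly the phenomenon that the three-dimensional Theorem~\ref{separateiterates} must also confront.
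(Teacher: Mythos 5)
Your overall strategy (separate tangent directions first, then establish a uniform bound on contact orders in the tangential case, then blow up) is the same as the paper's, and your leading-coefficient scaling computation is morally the paper's semilinear matrix computation. But there are two genuine gaps.

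First, the step you treat as a normalization --- choosing coordinates so that \(\{y=0\}\) is a \(\phi\)-invariant formal branch tangent to \(v_0\) --- is not available in general. Writing \(\phi(x,y) = (\lambda x + \cdots, \mu y + \cdots)\) with \(v_0\) the \(x\)-axis, the coefficients of a putative invariant graph \(y = h(x)\) are determined by equations \((\mu - \lambda^k)h_k = (\text{known terms})\), which are obstructed exactly at resonances \(\mu = \lambda^k\). For instance the germ \((x,y) \mapsto (\lambda x, \lambda^2 y + x^2)\) admits no formal invariant curve tangent to the \(x\)-axis (at order \(2\) one needs \(\lambda^2 h_2 + 1 = \lambda^2 h_2\)), and this germ really occurs: on the Hirzebruch surface \(\P_{\P^1}(\cO \oplus \cO(2))\) the map \((z,w) \mapsto (\lambda z, \lambda^2 w + z^2)\) is a global automorphism with this local form at its fixed point. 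Note that the resonance \(\mu = \lambda^{j_0}\) is precisely the case where your ratio \(\mu/\lambda^{j_0}\) equals \(1\), a case your recursion must engage, so it cannot be discarded. The paper avoids invariant branches entirely: it places \(C\) itself on a coordinate axis, \(C = \{x=0\}\), and tracks the pullback \(\rho\) acting semilinearly on \((x,y^k)/(x^2,xy,y^{k+1})\). (Your computation can be repaired in the same spirit --- the leading-order scaling rule holds for the differences \(h_{j+1}-h_j\) of the graphs without any invariant reference curve --- but as written your argument depends on the branch.)

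Second, the endgame is open by your own admission, and the case you defer is not where the difficulty lives. Your iterate-and-repeat recursion funnels everything into the quasi-unipotent case and leaves the tangent-to-the-identity regime unresolved; but in the paper's setup \(d\phi_p = \mathrm{id}\) forces \(\delta = d^k/a = 1\) for every \(k\), and then the key sum \(\sum_{i=0}^{n-1}\delta^i = n\) is nonzero in characteristic zero --- this is the \emph{easiest} case, not the hardest. The actual crux, dramatized in Example~\ref{whyiterate}, is termination of the iteration when nontrivial roots of unity keep appearing at successive orders, and the paper resolves it not by a case analysis on the Jordan form but by a single replacement: since the roots of unity of the form \(a^id^j\) form a finite cyclic group, one fixed iterate \(\phi^m\) arranges that for \emph{every} \(k\) the ratio \(d^k/a\) is either \(1\) or not a root of unity, so no second iteration is ever needed regardless of which contact order \(k\) then appears. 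Finally, your parabolic claim (``back in the separated case after one blow-up'') is unjustified: all the curves \(\phi^n(C)\) are tangent to the unique \(g\)-fixed direction, so after one blow-up their strict transforms still pass through a common point of the exceptional curve, and whether the induced derivative there is again a nontrivial unipotent or the identity is exactly what you cannot control.
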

\begin{proof}
Choose local analytic coordinates \(x\) and \(y\) on a neighborhood of \(p\) so that \(C\) is defined by \(x = 0\).  Let \(\rho : \C[[x,y]] \to \C[[x,y]]\) be the pullback map induced by \(\phi\), so that \(\phi^n(C)\) is defined by \(\rho^n(x) = 0\).

Write \(M = \left( \begin{smallmatrix} a & c \\ b & d \end{smallmatrix} \right)\) for the linear part of \(\rho\) with respect to the basis given by \(x\) and \(y\). If \(\left( \begin{smallmatrix} 1 \\ 0 \end{smallmatrix} \right)\) is not an eigenvector of \(M^n\) for any \(n\), then the curves \(C_n = \phi^n(C)\) all have distinct tangent directions at \(p\), and blowing up the point \(p\) gives the resolution required by the lemma. If there is some \(n\) so that \(\left( \begin{smallmatrix} 1 \\ 0 \end{smallmatrix} \right)\) is an eigenvector, we may replace \(\phi\) by \(\phi^n\) and suppose that the coefficient \(b\) is \(0\), so that \(\rho(x)\) has no \(y^1\) term. Then \(C\) is tangent to \(\phi^n(C)\) at \(p\) for all \(n\).  Since \(M\) is invertible, \(a\) and \(d\) are both nonzero.

We now require an elementary observation on roots of unity.  Suppose that \(a\) and \(d\) are two nonzero complex numbers.  Then there exists an integer \(m\) such that for any positive integer \(k\) we have either: \((d^m)^k/(a^m)\) is not a root of unity, or \((d^m)^k/(a^m) = 1\).  Indeed, the subgroup \( \{ a^i d^j \} \cap \Omega\) of roots of unity of the form \(a^i d^j\) for integers \(i\) and \(j\) is a finitely generated subgroup of \(\Omega\), and finitely generated subgroups of \(\Omega\) are finite cyclic groups. If we replace \(a\) by \(a^m\) and \(d\) by \(d^m\), the corresponding subgroup is replaced by its \(m\)th power.  The claim follows by taking \(m\) to be divisible by the orders of all  elements of \(\set{a^i d^j} \cap \Omega\).

Replacing \(\phi\) by the iterate \(\phi^m\), we may then assume that for every value of \(k\), the quotient \(d^k/a\) is either not a root of unity, or is equal to \(1\). The curve \(C\) is not invariant under \(\phi\), so the function \(\rho(x)\) must have some term not divisible by \(x\); suppose the lowest-order such term is \(fy^k\), with nonzero \(f\) and \(k \geq 2\).  Then \(\rho\) descends to an automorphism of the two-dimensional vector space \(V = (x,y^k)/(x^2,xy,y^{k+1})\), a quotient of ideals in \(\C[[x,y]]\). Since in the quotient \(\rho(x) = ax+fy^k\) and \(\rho(y^k) = d^k y^k\), the matrix for the action of \(\rho\) on \(V\) with respect to the basis given by \(x\) and \(y^k\) is
\[
P = \begin{pmatrix} a & 0 \\ f & d^k \end{pmatrix} = a \begin{pmatrix} 1 & 0 \\ e & \delta \end{pmatrix},
\]   
where \(e = f/a\) and \(\delta = d^k/a\).  By assumption, the entry \(\delta\) is either \(1\) or is not a root of unity.  Then we have
\[
P^n = a^n \begin{pmatrix}
1 & 0 \\ \sum_{j=0}^{n-1} e \delta^j & \delta^n
\end{pmatrix} 
\]
If \(\delta\) is not equal to \(1\), the sum is computed as
\[
P^n = a^n \begin{pmatrix}
1 & 0 \\ \frac{1-\delta^n}{1-\delta}e & \delta^n.
\end{pmatrix}
\]
However, \(\delta\) is not a root of unity, so the factor \(\frac{1-\delta^n}{1-\delta}\) is nonzero, as is \(e\).  If \(\delta = 1\), then
\[
P^n = a^n \begin{pmatrix}
1 & 0 \\ ne & 1
\end{pmatrix}.
\]
Either way, we have verified that \(P^n\) has a nonzero entry in the lower left, so \(\phi^n(x)\) has nonzero \(y^k\) term for all \(n\).  Then a sequence of blow-ups at the point \(p\) separates all of the curves \(\phi^n(C)\).
\end{proof}

\begin{example}
\label{whyiterate}
To see why it is necessary to know that \(d^k/a\) is not a root of unity for any value of \(k\), consider the following simple example:
\begin{align*}
\rho(x) &= x + y^2 + f(y) \\
\rho(y) &= dy,
\end{align*}
We might hope to take \(k = 2\) in applying the argument to \(\rho\), so that \(\rho^n(x)\) has nonzero \(y^2\) term for all \(n\).  However, we have
\[
\rho^2(x) = (x+y^2+f(y)) + ((dy)^2 + f(dy)) =  x + (1+d^2)y^2 + \cdots
\]
If \(d^2 = -1\), then \(\rho^2(x)\) has no \(y^2\) term, and so the \(\rho^n(x)\) do not all have nonzero \(y^2\) term as needed.  We must replace \(\rho\) by \(\rho^2\) and try again.  After passing to this iterate, \(\rho(x)\) must again have some nonzero term \(fy^k\) with \(k \geq 3\), but it is difficult to control the value of \(k\) that occurs. If \(d^k/a\) is a root of unity (for the new value of \(k\)), some iterate will have vanishing \(y^k\) term, and it will be necessary to iterate a second time.  The observation on roots of unity shows that we can pass to a single fixed iterate, and that no matter what value of \(k\) appears in the leading \(y^k\) term of \(\rho(x)\), the ratio \(d^k/a\) is not a root of unity.
\end{example}

\begin{example}
The lemma is no longer true if \(\phi\) is not an automorphism: consider the map \(\phi : \P^1 \times \P^1 \to \P^1 \times \P^1\) defined on \(\A^{\!2}\) by \(\phi(x,y) = (x,y^2)\).  The curve \(C\) defined by \(y-x=0\) has \(\phi^n(C)\) defined by \(y - x^{2^n} = 0\). The curves \(\phi^n(C)\) and \(\phi^m(C)\) have unbounded orders of tangency at \((0,0)\) when \(m\) and \(n\) are both large, and there is no fixed blow-up on which these infinitely many curves are separated.
\end{example}

The strategy of the proof of Theorem~\ref{separateiterates} is similar: we consider the map \(\phi^\ast : \widehat{\cO}_{X,C} \to \widehat{\cO}_{X,C}\) induced on the completion of the local ring at \(C\). The proof is again a computation in coordinates, but this requires some care: although \(\widehat{\cO}_{X,C}\) is isomorphic (as a local ring) to a power series ring over the function field \(K(C)\), the pullback \(\phi^\ast : \widehat{\cO}_{X,C} \to \widehat{\cO}_{X,C}\) is not a map of \(K(C)\)-algebras.  A second difficulty is that the induced map \(\phi^\ast : K(C) \to K(C)\) on the residue field is not the identity.  As a result, we will see that when carrying out power series manipulations in \(\widehat{\cO}_{X,C}\), cancellations of coefficients as in Example~\ref{whyiterate} occur not only when \(d^k/a\) is a root of unity, but when \(d^k/a\) is of the form \(\omega \, f/\phi^\ast(f)\), where \(\omega\) is a root of unity and \(f\) is an element of \(K(C)\).  To address this difficulty, we must first prove some facts about the elements of \(K(C)\) of this form.

We begin with some definitions.  Suppose that \(k\) is an algebraically closed field of characteristic \(0\), and that \(K/k\) is an extension field.  Let \(r : K \to K\) be an automorphism of \(K\) fixing \(k\).  Given an element \(f\) of \(K\) and a non-negative integer \(n\), define
\begin{align*}
\tau_r(f) &= f/r(f) \\
\alpha_r(f,n) &= f \, r(f) \cdots r^{n-1}(f) 
\end{align*}
Here \(\alpha_r(f,n)\) is defined for any non-negative integer \(n\), with \(\alpha_r(f,0) = 1\).  Both \(\tau_r(-)\) and \(\alpha_r(-,n)\) define multiplicative homomorphisms \(K^\times \to K^\times\).

The next lemma collects some additional identities satisfied by these functions, which will simplify some of the upcoming calculations.
\begin{lemma}
\label{someidentities}
Suppose that \(f \in K\), \(c \in k\), and \(n \in \Z_{\geq 0}\).  Then \(\tau_r\) and \(\alpha_r\) satisfy the following identities.
\begin{enumerate}
\item \(\alpha_r(cf,n) = c^n \alpha_r(f,n)\)
\item \(\alpha_r(\tau_r(f),n) = f/r^n(f) = \tau_{r^n}(f)\)
\item \(f \, r(\alpha_r(f,n)) = \alpha_r(f,n+1) = \alpha_r(f,n) r^n(f)\)
\item \(\alpha_r(\alpha_{r^m}(f,n),m) = \alpha_r(f,mn)\)
\end{enumerate}
\end{lemma}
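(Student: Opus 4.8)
The plan is to verify each identity by expanding the defining products and using only that $r$ is a field automorphism fixing $k$ pointwise. None of the four is deep; the argument is essentially a bookkeeping exercise with indices, so I would simply record the relevant manipulation in each case and invoke the multiplicativity of $\tau_r(-)$ and $\alpha_r(-,n)$ already noted above where convenient.

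For (1), I would expand $\alpha_r(cf,n)=\prod_{j=0}^{n-1} r^j(cf)$ and use $r^j(cf)=c\,r^j(f)$, which holds because $c\in k$ is fixed by $r$; pulling the $n$ copies of $c$ out of the product yields $c^n\alpha_r(f,n)$. For (2), the product telescopes: since $r^j(\tau_r(f))=r^j(f)/r^{j+1}(f)$, consecutive factors cancel in $\prod_{j=0}^{n-1} r^j(f)/r^{j+1}(f)$, leaving $f/r^n(f)$, which is $\tau_{r^n}(f)$ by definition. For (3), I would isolate an end factor of $\alpha_r(f,n+1)=\prod_{j=0}^{n} r^j(f)$: splitting off the top factor $r^n(f)$ gives $\alpha_r(f,n)\,r^n(f)$, while splitting off $f$ at the bottom and noting $\prod_{j=1}^{n} r^j(f)=r\bigl(\prod_{j=0}^{n-1} r^j(f)\bigr)=r(\alpha_r(f,n))$ gives $f\,r(\alpha_r(f,n))$.

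The one identity worth more than a line is (4), where I would set $g=\alpha_{r^m}(f,n)=\prod_{i=0}^{n-1} r^{im}(f)$ and expand
\[
\alpha_r(g,m)=\prod_{j=0}^{m-1} r^j(g)=\prod_{j=0}^{m-1}\prod_{i=0}^{n-1} r^{j+im}(f).
\]
The crux is the observation that $(i,j)\mapsto j+im$ is a bijection from $\{0,\dots,n-1\}\times\{0,\dots,m-1\}$ onto $\{0,1,\dots,mn-1\}$, which is exactly the division algorithm with unique quotient $i$ and remainder $j$. Hence the double product reindexes to $\prod_{\ell=0}^{mn-1} r^\ell(f)=\alpha_r(f,mn)$, as claimed.

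I do not anticipate any genuine obstacle: every statement is a formal consequence of the definitions. The only place demanding a moment's care is confirming in (4) that the exponents $j+im$ run over $0,\dots,mn-1$ exactly once, and the division algorithm settles this immediately.
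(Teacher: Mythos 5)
Your proposal is correct and follows essentially the same route as the paper: direct expansion for (1), telescoping for (2), splitting off end factors for (3), and reindexing the double product of the $mn$ factors $r^\ell(f)$ for (4). The paper organizes (4) by using multiplicativity of $\alpha_r(-,m)$ and grouping the exponents into consecutive blocks $im,\dots,im+m-1$, which is the same division-algorithm bijection you invoke explicitly.
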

\begin{proof}
For (1),
\[
\alpha_r(cf,n) = (cf) \, r(cf) \cdots r^{n-1}(cf) = c^{n} f \, r(f) \cdots r^{n-1}(f) = c^n \alpha(f,n).
\]
For (2), we have
\[
\alpha_r(\tau_r(f),n) = \frac{f}{r(f)} \frac{r(f)}{r^2(f)} \cdot \frac{r^{n-1}(f)}{r^n(f)} = \frac{f}{r^n(f)} = \tau_{r^n}(f).
\]
For (3), simply note that
\begin{align*}
f \, r(\alpha_r(f,n)) &= f \, r(f \, r(f) \cdots r^{n-1}(f)) = f \, r(f) \, r^2(f) \cdots r^{n}(f) \\ &= \alpha_r(f,n+1) = \alpha_r(f,n) r^n(f).
\end{align*}
The last claim (4) is checked by
\begin{align*}
\alpha_r(\alpha_{r^m}(f,n),m) &= \alpha_r(f \, r^m(f) \cdots r^{m(n-1)}(f),m) \\
&= \left( \alpha_r(f,m) \right)  \left( \alpha_r(r^m(f),m) \right)  \cdots  \left( \alpha_r(r^{m(n-1)}(f,m)) \right)  \\
&= \left( f \, r(f) \cdots r^{m-1}(f) \right) \left( r^m(f) \cdots r^{2m-1}(f) \right) \cdots \left( r^{m(n-1)+1}(f) \cdots f^{mn-1}(f) \right) \\ &= \alpha_r(f,mn).\qedhere
\end{align*}
\end{proof}
\noindent We say that \(r : K \to K\) is \emph{shifting} over \(k\) if for any \(f\) in \(K\):
\begin{enumerate}
\item[(S1)] If \(\alpha_r(f,n) = 1\) for some \(n \geq 1\), then \(f\) is an \(n\)\nth{} root of unity in \(k\).
\item[(S2)] If \(\tau_r(f)\) is a root of unity, then \(\tau_r(f) = 1\).
\end{enumerate}
Consider also the related condition
\begin{enumerate}
\item[(S1$^\prime$)] If \(f\) is an element of \(K\) with \(f = r^{n}(f)\) for some \(n \geq 1\), then \(f\) lies in \(k\).
\end{enumerate}
Suppose that \((R,\fm)\) is a local \(k\)-algebra with residue field \(K = R/\fm\), and that \(r : K \to K\) is a shifting automorphism. We say that a local \(k\)-algebra automorphism \(\rho : R \to R\) is \(r\)-shifting if the induced map on the residue field coincides with \(r : K \to K\).  The next lemma collects a few elementary observations about \(r\)-shifting automorphisms.
\begin{lemma}
\label{shiftingproperties}
Suppose that \(r  : K \to K\) is shifting, and \(\rho : R \to R\) is an \(r\)-shifting automorphism of a local ring.
\begin{enumerate}
\item If \(r\) satisfies condition (S1$^\prime$), then \(r\) satisfies condition (S1).
\item If \(\alpha_r(f,m)\) is an \(n\)\nth root of unity, then \(f\) is an \(mn\)\nth root of unity.
\item The iterate \(r^m : K \to K\) is shifting for any integer \(m \geq 1\).
\item The iterate \(\rho^m : R \to R\) is \(r^m\)-shifting for any \(m \geq 1\).
\end{enumerate}
\end{lemma}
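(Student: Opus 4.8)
The plan is to establish the four parts in the order given, since each relies on the ones before it, and to lean throughout on the formal identities recorded in Lemma~\ref{someidentities}. The recurring device is a bootstrapping step: whenever a product of $r$-translates of some element is shown to be a root of unity, one first forces the relevant element into the base field $k$, on which $r$ acts as the identity, after which every expression $\alpha_r(-,n)$ collapses to an ordinary $n$th power and the conclusion is immediate. I will also use freely that, since $k$ is algebraically closed of characteristic $0$, every root of unity of $K$ already lies in $k$.

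For part (1), suppose $\alpha_r(f,n) = 1$. Identity (3) of Lemma~\ref{someidentities} reads $f\, r(\alpha_r(f,n)) = \alpha_r(f,n)\, r^n(f)$; substituting $\alpha_r(f,n)=1$ collapses this to $f = r^n(f)$. Condition (S1$^\prime$) then places $f$ in $k$, where $r$ is trivial, so $\alpha_r(f,n) = f^n = 1$ and $f$ is an $n$th root of unity, which is exactly (S1). For part (2), set $g = \alpha_r(f,m)$ and assume $g$ is an $n$th root of unity. Then $g \in k$, so $r^{jm}(g) = g$ for all $j$; grouping the product $\alpha_r(f,mn)$ into $n$ consecutive blocks of length $m$, exactly as in the proof of Lemma~\ref{someidentities}(4), gives $\alpha_r(f,mn) = \prod_{j=0}^{n-1} r^{jm}(g) = g^n = 1$. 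Since $r$ is shifting, (S1) now forces $f$ to be an $mn$th root of unity.

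For part (3), I check (S1) and (S2) for $r^m$. If $\alpha_{r^m}(f,n) = 1$, then identity (4) gives $\alpha_r(f,mn) = \alpha_r(\alpha_{r^m}(f,n),m) = \alpha_r(1,m) = 1$, so (S1) for $r$ puts $f$ in $k$; being in $k$, $\alpha_{r^m}(f,n) = f^n = 1$, so $f$ is an $n$th root of unity, establishing (S1) for $r^m$. For (S2), identity (2) gives $\tau_{r^m}(f) = \alpha_r(\tau_r(f),m)$; if this is a root of unity, part (2) shows $\tau_r(f)$ is itself a root of unity, whence (S2) for $r$ yields $\tau_r(f)=1$ and therefore $\tau_{r^m}(f) = \alpha_r(1,m) = 1$. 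Thus $r^m$ is shifting. Part (4) is then formal: $\rho^m$ is again a local $k$-algebra automorphism, and since passage to the residue field $K = R/\fm$ is functorial, the automorphism of $K$ induced by $\rho^m$ is $r^m$, which is shifting by part (3); hence $\rho^m$ is $r^m$-shifting by definition.

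The only genuine content lies in the bootstrapping used in parts (2) and (3): one must deduce membership in $k$ \emph{before} simplifying the telescoping products, because $r$ does not fix a general element of $K$, so $\alpha_r(-,n)$ and $\tau_r$ are not the naive power and quotient unless their arguments lie in $k$. I expect this to be the main (and only) subtlety; parts (1) and (4) are bookkeeping, the latter depending on (3) solely through the fact that $r^m$ is shifting.
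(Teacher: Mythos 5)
Your proof is correct and follows essentially the same route as the paper's: parts (1), (3), and (4) are argued identically, using the identities of Lemma~\ref{someidentities} together with conditions (S1$^\prime$), (S1), and (S2), and part (4) is the same formal observation. The only (harmless) divergence is in part (2): you telescope \(\alpha_r(f,mn)\) into \(n\) blocks of length \(m\), using that the root of unity \(\alpha_r(f,m)\) lies in \(k\) and is therefore fixed by \(r\), and then apply (S1) at level \(mn\), whereas the paper divides \(f\) by an \(m\)\nth root \(\zeta\) of \(\omega_n\) and applies (S1) at level \(m\) --- both manipulations are valid and rest on the same two facts (roots of unity lie in the fixed field \(k\), and (S1) for \(r\)).
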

\begin{proof}

First we check (1).  Suppose that \(r\) satisfies condition (S1$^\prime$).   By (3) of Lemma~\ref{someidentities}, we have \(f \, r(\alpha_r(f,n)) = \alpha_r(f,n+1) = \alpha_r(f,n) r^{n}(f)\).  If  \(\alpha_r(f,n) = 1\), then \(r(\alpha_r(f,n)) = 1\) as well, and so \(f = r^{n}(f)\).  By (S1$^\prime$), we have \(f \in k\), and so \(1 = \alpha_r(f,n) = f^n\) and \(f\) must be an \(n\)\nth root of unity.

Next we prove (2).  Suppose that \(\alpha_r(f,m) = \omega_n\).  Let \(\zeta\) be an \(m\)\nth root of \(\omega_n\) in \(k\).  Then \(\alpha_r(\zeta,m) = \zeta^m = \omega_n\), and so \(\alpha_r(f/\zeta,m) = 1\).  By (S1), it must be that \(f/\zeta\) is an \(m\)\nth root of unity.  Since \(\zeta\) is an \(mn\)\nth root of unity, \(f\) is itself an \(mn\)\nth root of unity.

To prove (3), we first check condition (S1) for \(r^m\).  Suppose that \(\alpha_{r^m}(f,n) = 1\). Then \(\alpha_r(f,mn) = \alpha_r(\alpha_{r^m}(f,n),m) = \alpha_r(1,m) = 1\).  By condition (S1) for \(r\), \(f\) is an \(mn\)\nth root of unity in \(k\).
That \(f\) is in \(k\) implies that  \(\alpha_{r^m}(f,n) = f^n = 1\), and \(f\) is in fact an \(n\)\nth root of unity as needed. Suppose now that \(\tau_{r^m}(f) = \omega_n\) is an \(n\)\nth root of unity.  Then \(\alpha_r(\tau_r(f),m) = \tau_{r^m}(f) = \omega_n\) by (2) of Lemma~\ref{someidentities}.
By (2) above, \(\tau_r(f)\) is an \(mn\)\nth root of unity.  But Condition (S2) for \(r\) implies that \(\tau_r(f) = 1\).  Then \(\tau_{r^m}(f) = \alpha_r(\tau_r(f),m) = 1\), as required for (S2).

Claim (4) is immediate from the definition. 
\end{proof}

\begin{lemma}
\label{curveshifting}
Suppose that \(\phi : C \to C\) is an automorphism of an integral curve over \(k\).  Let \(r : K \to K\) be the pullback map on the function field \(K = K(C)\). Then some iterate of \(r\) is shifting.
\end{lemma}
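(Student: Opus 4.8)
The plan is to reduce the whole statement to a single fact: an infinite-order automorphism of the function field has no nonconstant invariants. Write $r = \phi^\ast$. Since $C$ is integral, an automorphism inducing the identity on $K$ is the identity on a dense open set, hence everywhere, so $\phi \mapsto r$ is an injective homomorphism $\Aut(C) \to \Aut(K/k)$; in particular $\phi^n$ and $r^n$ have the same order for every $n \geq 1$. Thus I would split into the cases where $\phi$ has finite or infinite order, and in each case produce an iterate of $r$ satisfying the two conditions (S1) and (S2) defining a shifting automorphism.

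First I would dispose of the case that $\phi$ has finite order $N$, where $r^N = \id$. Here it is enough to observe that the identity is shifting: condition (S2) is vacuous since $\tau_{\id}(f) = f/f = 1$ always, while for (S1) one has $\alpha_{\id}(f,n) = f^n$, so $\alpha_{\id}(f,n) = 1$ forces $f$ to be an $n$\nth root of unity, which lies in $k$ because $k$ is algebraically closed. This case also explains why passing to an iterate is genuinely necessary: for $\phi(z) = \zeta z$ on $\P^1$ with $\zeta$ a primitive root of unity one finds $\tau_r(z) = \zeta^{-1} \neq 1$, so $r$ itself violates (S2).

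The substance is the infinite-order case, where I expect to show that $r$ is already shifting. The key step is the claim that if $s : K \to K$ is a $k$-automorphism of infinite order, then $K^s = k$. I would prove this by a transcendence-degree argument: if $K^s \neq k$, then since $k$ is algebraically closed every element of $K^s \setminus k$ is transcendental, so $K^s$ and $K$ both have transcendence degree $1$ over $k$; hence $K/K^s$ is algebraic, and being finitely generated it is finite. But then $s$ lies in the finite group $\Aut(K/K^s)$, of order at most $[K:K^s]$, contradicting that $s$ has infinite order. Applying this to $s = r^n$, which has infinite order whenever $r$ does, gives $K^{r^n} = k$ for every $n \geq 1$; this is exactly condition (S1$'$), so (S1) follows from Lemma~\ref{shiftingproperties}(1). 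For (S2), if $\tau_r(f) = f/r(f) = \omega$ is a root of unity of order $m$, then $r(f) = \omega^{-1} f$ and inductively $r^m(f) = \omega^{-m} f = f$, so $f \in K^{r^m} = k$; then $r$ fixes $f$ and $\omega = f/r(f) = 1$, as required.

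The hard part is really just the transcendence-degree claim that an infinite-order field automorphism has trivial fixed field. Once that is in hand, both defining conditions of a shifting map follow formally — (S1) through Lemma~\ref{shiftingproperties}(1) and (S2) by a one-line eigenvalue computation — and the finite-order case is immediate. I do not anticipate any difficulty beyond correctly handling the bookkeeping between $\phi$ and $r$ and their iterates.
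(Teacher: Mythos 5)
Your proof is correct, but the engine driving it is genuinely different from the paper's. The paper uses the same outer structure — split on whether \(\phi\) has finite or infinite order, prove (S1$^\prime$) and deduce (S1) via Lemma~\ref{shiftingproperties}(1), then check (S2) — but in the infinite-order case its key tool is dynamical: it chooses a point \(z \in C\) with infinite, hence Zariski-dense, orbit, and argues that any \(f\) with \(f = r^n(f)\) takes the single value \(f(z)\) on the dense set \(\set{\phi^{mn}(z)}\), hence is constant; likewise, if \(\tau_r(f)\) is a root of unity, then \(f\) takes only finitely many values along the orbit and is again constant. Your replacement for both steps — that the fixed field \(K^s\) of an infinite-order \(k\)-automorphism \(s\) equals \(k\), proved by the transcendence-degree count together with the bound \(\abs{\Aut(K/K^s)} \leq [K:K^s]\) for the finite separable extension \(K/K^s\) — is purely field-theoretic. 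This buys two things: you never need the fact that an infinite-order automorphism of a curve has a point with Zariski-dense orbit (the paper asserts this without proof, and it does require an argument, e.g.\ a counting argument over uncountable fields); and your statement applies to an arbitrary infinite-order element of \(\Aut(K/k)\), not just one induced by a biregular automorphism of a model of \(K\). Your treatment of (S2), reducing it to the fixed-field claim via \(r^m(f) = \omega^{-m} f = f\), is likewise a clean substitute for the paper's second orbit computation. What the paper's route buys in exchange is geometric transparency — it stays entirely within evaluation of rational functions along orbits — and it is the version of the argument that generalizes naturally to the semilinear setting used later in Section~\ref{localdynamics}. Your bookkeeping between \(\phi\) and \(r\) (injectivity of \(\phi \mapsto \phi^\ast\), preservation of order under iteration, and the finite-order case via the identity being shifting) is all correct.
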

\begin{proof}
If \(\phi\) has finite order, then some iterate of \(r\) is the identity, which is trivially shifting.  If \(\phi\) has infinite order, some point \(z \in C\) has infinite, hence Zariski dense, orbit.  Suppose that \(f\) is an element of \(K\) with \(f = r^n(f)\) for some nonzero \(n\), so that \(f = r^{mn}(f)\) for any integer \(m\).  Then \(f(z) = f(\phi^{mn}(z))\) for all \(m\), and \(f\) must be constant.  Consequently \(r\) satisfies condition (S1$^\prime$) and condition (S1).

Suppose that \(f/r(f) = \omega_n\) is a root of unity.  Then \(f(\phi^m(z)) = \omega_n^m f(z)\), so there is a Zariski dense set of points \(\phi^m(z)\) with \(f(\phi^m(z))\) an \(n\)\nth root of unity.  Then \(f\) must be constant, and so \(f/r(f) = 1\).
\end{proof}

Say that \(f \in K\) is an \(r\)-coboundary if \(f = \tau_r(g)\) for some \(g\).  Similarly, say that \(f\) is an \(n\)\nth \(r\)-root of unity if \(f = \omega_n \, \tau_r(g)\), where \(\omega_n \in k\) is an \(n\)\nth root of unity and \(g \in K\).  We now collect some simple observations about \(r\)-coboundaries and \(r\)-roots of unity, generalizing properties of the roots of unity in \(k\).

\begin{lemma}
\label{quotientsandroots}
Suppose that \(f\) is an element of \(K\).
\begin{enumerate}
\item The \(r\)-roots of unity and \(r\)-coboundaries are multiplicative subgroups of \(K^\times\).
\item  If \(f\) is an \(n\)\nth \(r\)-root of unity for some \(n\), then \(\alpha_r(f,n)\) is an \(r^n\)-coboundary.  
\item If \(\alpha_r(f,n) = \omega_k \tau_{r^n}(g)\) is an \(r^n\)-root of unity, then \(f = \zeta \, \tau_r(g)\) is an \(r\)-root of unity.
\item Suppose that \(f\) and \(g\) are two elements of \(K\).  There exists \(m\) such that for all \(k\), either \(\alpha_r(f^k/g,m)\) is an \(r^m\)-coboundary, or \(\alpha_r(f^k/g,m)\) is not an \(r^m\)-root of unity.
\end{enumerate}
\end{lemma}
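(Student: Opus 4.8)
The plan is to treat the four claims in increasing order of difficulty, since each feeds into the next; throughout I take $r$ to be shifting over $k$, as in the preceding lemmas. Claims (1)--(3) are essentially bookkeeping with the identities of Lemma~\ref{someidentities}, while claim (4) is the substantive one: it is the function-field incarnation of the elementary ``roots of unity'' observation used in the surface case (Lemma~\ref{surfacestatement}), and its proof reuses (2) and (3).

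For (1), I would observe that $\tau_r$ is a multiplicative homomorphism $K^\times \to K^\times$, so the $r$-coboundaries are exactly its image $B = \im \tau_r$, a subgroup. The $r$-roots of unity are the set $\Omega \cdot B$, where $\Omega \subset k^\times$ is the group of roots of unity of $k$; as a product of two subgroups of the abelian group $K^\times$, this is again a subgroup. For (2), writing $f = \omega_n \tau_r(g)$ and using multiplicativity of $\alpha_r(-,n)$ together with Lemma~\ref{someidentities}(1) and (2) gives $\alpha_r(f,n) = \omega_n^n \cdot \tau_{r^n}(g) = \tau_{r^n}(g)$, an $r^n$-coboundary. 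For (3), the same identities rewrite the hypothesis $\alpha_r(f,n) = \omega \, \tau_{r^n}(g)$ as $\alpha_r\!\left(f \cdot (\zeta \tau_r(g))^{-1}, n\right) = 1$, after choosing $\zeta \in k$ with $\zeta^n = \omega$ (possible as $k$ is algebraically closed). Here the shifting hypothesis enters: condition (S1) forces $f \cdot (\zeta \tau_r(g))^{-1}$ to be a root of unity in $k$, whence $f$ is itself an $r$-root of unity of the asserted form.

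For (4), the key point is uniformity in $m$, which I would extract from a finiteness statement in the quotient group $K^\times/B$. Write $\bar f, \bar g$ for the images of $f,g$ and $\bar\Omega = \Omega B/B$ for the image of the roots of unity; since every root of unity is torsion, $\bar\Omega$ lies in the torsion subgroup of $K^\times/B$. Let $T = \{k : f^k/g \text{ is an } r\text{-root of unity}\}$, equivalently $\{k : \bar f^k \bar g^{-1} \in \bar\Omega\}$. The set $\{\bar f^k \bar g^{-1} : k \in T\}$ is contained in $\langle \bar f, \bar g\rangle \cap \bar\Omega$, and this intersection is finite because it lies in the torsion subgroup of the finitely generated abelian group $\langle \bar f, \bar g\rangle$. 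Choosing $m$ to be the least common multiple of the orders of a finite set of roots of unity representing these finitely many classes, every $f^k/g$ with $k \in T$ becomes an $m$\nth $r$-root of unity, so $\alpha_r(f^k/g,m)$ is an $r^m$-coboundary by (2). For $k \notin T$, the contrapositive of (3) shows $\alpha_r(f^k/g,m)$ is not an $r^m$-root of unity, which completes the dichotomy.

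The main obstacle is exactly the uniformity demanded in (4): a priori each $k$ for which $f^k/g$ is an $r$-root of unity might require a different exponent, and the content of the statement is that one fixed $m$ works for all $k$ at once. The resolution is the finiteness of $\langle \bar f, \bar g\rangle \cap \bar\Omega$, which is the precise analog of the fact, used in the surface case, that a finitely generated subgroup of the roots of unity is finite cyclic; once this is in hand, everything else is formal manipulation with the $\tau_r$ and $\alpha_r$ identities.
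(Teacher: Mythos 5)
Your proposal is correct. Parts (1)--(3) coincide with the paper's argument: (1) is immediate from $\tau_r$ being a homomorphism, and (2)--(3) are the same manipulations with Lemma~\ref{someidentities} plus condition (S1), including the choice of $\zeta$ with $\zeta^n = \omega$. Where you genuinely diverge is in part (4), the only substantive claim. The paper works additively with exponents: it shows $\Sigma_f = \set{n : f^n \text{ is an } r\text{-root of unity}}$ is a subgroup of $\Z$ and $\Sigma_{f,g}$ is a coset of it, then splits into two cases ($\Sigma_f$ trivial, so at most one good $k$ exists; or $\Sigma_f$ infinite with positive generator $e$, in which case writing $k = e\ell + k_0$ shows every $f^k/g$ with $k \in \Sigma_{f,g}$ is an $mn$\nth $r$-root of unity for a fixed $mn$ built from the orders of two specific roots of unity). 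You instead work multiplicatively in the quotient $K^\times/B$ by the coboundary subgroup $B$: the classes $\bar f^k \bar g^{-1}$ with $k \in T$ lie in $\ang{\bar f, \bar g} \cap \bar\Omega$, which is finite since it sits inside the torsion subgroup of a finitely generated abelian group, and the lcm of the orders of representatives gives the uniform $m$. The two arguments use claims (2) and (3) identically to convert "uniformly bounded order of the root of unity" into the required dichotomy; the difference is purely in how uniformity is obtained. Your route avoids the paper's case division and makes explicit the analogy with the finiteness fact about finitely generated groups of roots of unity invoked in Lemma~\ref{surfacestatement}, at the cost of invoking the structure theorem; the paper's route is more elementary and produces the exponent $m$ more explicitly, but its coset bookkeeping is easier to get wrong.
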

\begin{proof}
Statement (1) is clear. For (2), if \( f = \omega_n \tau_r(g)\), then
\[
\alpha_r(f,n) = \alpha_r(\omega_n \tau_r(g),n) = \omega_n^n \alpha_r(\tau_r(g),n) = \tau_{r^n}(g).
\]
On the other hand, for (3), suppose that \(\alpha_r(f,n)\) is a \(k\)\nth \(r^n\)-root of unity, so that \(\alpha_r(f,n) = \omega_k \tau_{r^n}(g)\).  Let \(f_0 = \zeta \, \tau_r(g)\), where \(\zeta\) is chosen so that \(\zeta^n = \omega_k\).  We have
\[
\alpha_r(f_0,n) = \alpha_r(\zeta \tau_r(g),n) = \zeta^n \alpha_r(\tau_r(g),n) = \omega_k \tau_{r^n}(g) = \alpha_r(f,n),
\]
and so \(\alpha_r(f/f_0,n) = 1\).  By (S1), we have \(f/f_0 = \omega_n\), which gives \(f = \omega_n f_0 = (\omega_n \zeta) \tau_r(g)\), and we conclude that \(f\) is an \(r\)-root of unity.

The set \(\Sigma_f = \set{n : \text{$f^n$ is an $r$-root of unity}}\) is evidently a subgroup of \(\Z\).  Let \(\Sigma_{f,g} = \set{n : \text{$f^n/g$ is an $r$-root of unity}}\).  If \(k\) and \(\ell\) both lie in \(\Sigma_{f,g}\), then 
we can write \(f^k/g = \omega_m \tau_r(a)\) and \(f^\ell/g = \omega_n \tau_r(b)\) and so
\[
f^{k-\ell} = \frac{f^k/g}{f^\ell/g} = \frac{\omega_m}{\omega_n} \tau_r(a/b).
\]
Thus \(f^{k-\ell}\) is an \(r\)-root of unity and \(k-\ell\) is a member of \(\Sigma_f\). It follows that \(\Sigma_{f,g}\) is a coset of \(\Sigma_f\) in \(\Z\).

Suppose first that no nonzero power of \(f\) is an \(r\)-root of unity, so \(\Sigma_f = \set{0}\) is trivial.  Then there is at most one value \(k\) such that \(f^k/g\) is an \(r\)-root of unity; write \(f^k/g = \omega_m \tau_r(h)\).  We claim that (4) holds for this value of \(m\).  Indeed, since \(f^k/g\) is an \(m\)\nth \(r\)-root of unity, by (1) \(\alpha_r(f^k/g,m)\) is an \(r^m\)-coboundary.  On the other hand, if \(\ell \neq k\), then \(f^\ell/g\) is not an \(r\)-root of unity, and by claim (2), \(\alpha_r(f^\ell/g,m)\) is not an \(r^m\)-root of unity.

Suppose instead that \(\Sigma_f\) is infinite, and let \(e \in \Sigma_f\) be the positive generator.  Write \(f^e = \omega_m \tau_r(a)\).  Fix some \(k_0\) in \(\Sigma_{f,g}\) and write \(f^{k_0}/g = \omega_n \tau_r(b)\).  If \(k\) is an any element of \(\Sigma_{f,g}\), we have \(k = e \ell + k_0\) for some \(\ell\).  Then
\[
\frac{f^k}{g} = \frac{f^{e \ell +k_0}}{g} = (f^e)^\ell \frac{f^{k_0}}{g} = (\omega_m \tau_r(a))^\ell (\omega_n \tau_r(b)) = (\omega_m \omega_n) \tau_r(a^\ell b).
\]
This shows that \(f^k/g\) is an \(mn\)\nth \(r\)-root of unity, independent of \(k \in \Sigma_{f,g}\).  By observation (1), if \(k \in \Sigma_{f,g}\), then \(\alpha_r(f^k/g,mn)\) is an \(r^{mn}\)-coboundary.  On the other hand, if \(k \not\in \Sigma_{f,g}\), then \(f^k/g\) is not an \(r\)-root of unity, and so \(\alpha_r(f^k/g,mn)\) is not an \(r^{mn}\)-root of unity by claim (3).  This completes the proof of (4).
\end{proof}

\begin{lemma}
\label{shiftinglemma}
Let \(k\) be an algebraically closed field of characteristic \(0\).  Suppose that  \(R\) is a regular local \(k\)-algebra of dimension \(2\) with maximal ideal \(\fm\) and residue field \(R/\fm \cong K\), and that \(\rho : R \to R\) is a local \(k\)-algebra automorphism inducing a shifting automorphism \(r : K \to K\).  Let \(I\) be a height-\(1\) prime ideal not contained in \(\fm^2\).  After replacing \(\rho\) by a suitable iterate, there exists an integer \(N\) such that for any nonzero \(n\), we have \(I + \rho^n(I) = I + \fm^N\).  
\end{lemma}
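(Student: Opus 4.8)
The plan is to reduce this ideal-theoretic statement to a concrete power-series computation and then control that computation using the arithmetic of $r$-roots of unity from Lemma~\ref{quotientsandroots}. After passing to the completion (harmless, since all the ideals to be compared contain a power of $\fm$, so equality may be checked in $\widehat{R}/\widehat{\fm}^N = R/\fm^N$) I would invoke the Cohen structure theorem to write $R \cong K[[x,y]]$. As $I$ is a height-$1$ prime not contained in $\fm^2$, it is generated by a regular parameter, so after a $K$-algebra change of coordinates I may take $I = (x)$; this leaves the induced residue-field map $r$ unchanged. Since $\rho$ is an automorphism, $\rho^n(I) = (\rho^n(x))$, so $I + \rho^n(I) = (x,\rho^n(x))$, and reducing modulo $(x)$ identifies this with $(x) + \fm^{m_n}$, where $m_n$ is the $y$-adic order of $\overline{\rho^n(x)} \in K[[y]]$. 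Thus the lemma is equivalent to the claim that, after replacing $\rho$ by an iterate, $m_n$ is independent of $n$ (and then $N = m_1$). I will use that $I$ is not $\rho$-periodic—as it is in the intended application, where the divisor is not $\phi$-periodic—so that $\rho^n(x) \notin (x)$ and $m_n$ is finite; applying the $r^n$-semilinear map $\rho^n$ and comparing $K$-dimensions of the finite-length quotients gives $m_{-n} = m_n$, so it suffices to treat $n \geq 1$.

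First I would perform the linear reduction, paralleling Lemma~\ref{surfacestatement}. The automorphism $\rho$ induces an $r$-semilinear automorphism $\bar\rho$ of $\fm/\fm^2$. If the $\bar y$-coefficient of $\bar\rho^n(\bar x)$ is nonzero for every $n \geq 1$, then $m_n = 1$ and we finish with $N = 1$. Otherwise some $\bar\rho^{n_0}(\bar x)$ is a multiple of $\bar x$; replacing $\rho$ by $\rho^{n_0}$ I may assume $\bar\rho(\bar x) = a\bar x$, which forces the linear part of each $\rho^n(x)$ to be a multiple of $x$, so $m_n \geq 2$. Writing $f y^k$ (with $k \geq 2$, $f \neq 0$) for the lowest pure-$y$ term of $\rho(x)$, the ideal $J = (x) + \fm^k$ satisfies $\rho(J) \subseteq J$, hence $\rho^n(x) \in J$ and $m_n \geq k$. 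Everything then reduces to showing the coefficient of $y^k$ in $\overline{\rho^n(x)}$ is nonzero, which would give $m_n = k =: N$.

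To extract that coefficient I would pass to the rank-$2$ quotient $V = (x,y^k)/(x\fm + (y^{k+1}))$, which is $\rho$-stable and on which $\rho$ acts $r$-semilinearly with matrix $M = \left(\begin{smallmatrix} a & 0 \\ f & d^k \end{smallmatrix}\right)$ in the basis $\bar x,\overline{y^k}$ (here $d$ is the coefficient of $y$ in $\rho(y)$); working in $V$ sidesteps the fact that $\rho$ preserves no coefficient field. The sought coefficient is the lower-left entry of the twisted product $M\,r(M)\cdots r^{n-1}(M)$, which the identities of Lemma~\ref{someidentities} rewrite as $\alpha_r(a,n)\,S_n$ with $S_n = \sum_{j=0}^{n-1}\alpha_r(\delta,j)\,r^j(e)$, $\delta = d^k/a$, and $e = f/a \neq 0$. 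As $\alpha_r(a,n) \neq 0$, the entire problem becomes: show $S_n \neq 0$ for all $n \geq 1$.

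This is where the choice of iterate enters, and where the main obstacle lies. Applying Lemma~\ref{quotientsandroots}(4) to the pair $(d,a)$ yields an $m$ for which, whatever the new leading exponent $k$ is, the element $\delta = \alpha_r(d^k/a,m)$ is either an $r^m$-coboundary or not an $r^m$-root of unity; by Lemma~\ref{shiftingproperties}(3),(4) I may replace $\rho,r$ by $\rho^m,r^m$ and re-run the previous steps, so I assume $\delta$ is of one of these two types. If $\delta = \tau_r(h)$ is a coboundary, then $S_n = h\sum_{j=0}^{n-1} r^j(w)$ with $w = e/h$; were $S_N = 0$, a one-line difference gives $r^N(w) = w$, so $\tau_{r^N}(w) = \alpha_r(\tau_r(w),N) = 1$ and conditions (S1),(S2) force $\tau_r(w) = 1$, whence $S_N = Nhw \neq 0$, a contradiction. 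The delicate case is when $\delta$ is not an $r$-root of unity: the natural telescoping would require solving $w - \delta\,r(w) = e$ in $K$, which generally has no solution. I would circumvent this as follows. Assume $S_N = 0$; the recursion $S_{j+1} = e + \delta\,r(S_j)$ then forces $(S_j)$ to be $N$-periodic, and summing over one full period produces $\Sigma = \sum_{j=0}^{N-1} S_j$, which honestly solves $\Sigma - \delta\,r(\Sigma) = Ne$. With a genuine solution the telescoping now yields $N S_n = \Sigma - \alpha_r(\delta,n)\,r^n(\Sigma)$; taking $n = N$ gives $\alpha_r(\delta,N)\,r^N(\Sigma) = \Sigma$, so either $\Sigma = 0$—forcing $S_n \equiv 0$, impossible since $S_1 = e \neq 0$—or $\alpha_r(\delta/\tau_r(\Sigma),N) = 1$, which by (S1) exhibits $\delta$ as an $r$-root of unity, contradicting the standing assumption. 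Hence $S_n \neq 0$ for all $n$, and the proof concludes with $N = k$.
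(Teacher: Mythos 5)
Your proof is correct, and its skeleton is the same as the paper's: complete and use the Cohen structure theorem to write \(\wR \cong K[[x,y]]\) with \(I=(x)\); pass to an iterate making the linear part triangular; apply Lemma~\ref{quotientsandroots}(4) to the pair \((d,a)\) \emph{before} the leading exponent \(k\) is known (the essential uniformity, which you correctly emphasize); then reduce, via the \(r\)-semilinear action on \(J/\fm J\) with \(J=(x,y^k)\), to showing \(S_n=\sum_{j=0}^{n-1}\alpha_r(\delta,j)\,r^j(e)\neq 0\). You differ from the paper in two sub-steps, and both of your variants are sound. For negative \(n\), the paper reruns the matrix computation for \(\rho^{-1}\) (checking that \(\delta'=r^{-1}(\delta^{-1})\) again satisfies the coboundary dichotomy), whereas you get \(m_{-n}=m_n\) at the outset from the ring isomorphism \(R/(I+\rho^{-n}(I))\cong R/(I+\rho^{n}(I))\) induced by \(\rho^n\); this is cleaner. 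For the non-vanishing of \(S_n\), the paper argues uniformly: from the identity \(e+\delta\,r(S_n)=S_n+\alpha_r(\delta,n)\,r^n(e)\), vanishing of \(S_n\) forces \(\alpha_r(\delta,n)=\tau_{r^n}(e)\), Lemma~\ref{quotientsandroots}(3) then makes \(\delta\) an \(r\)-root of unity, hence by the dichotomy an \(r\)-coboundary, and a telescoping sum gives \(S_n=ne\neq 0\); your Case A is essentially that telescope, while your Case B (\(\delta\) not an \(r\)-root of unity) replaces the appeal to Lemma~\ref{quotientsandroots}(3) by the periodic-sum element \(\Sigma\) solving \(\Sigma-\delta\,r(\Sigma)=Ne\) --- a genuinely different, if slightly longer, route to the same contradiction. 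Finally, you are right that the statement tacitly assumes \(I\) is not \(\rho\)-periodic: the paper's proof invokes ``\(I=(x)\) is not invariant under \(\rho\)'' even though this is not among the stated hypotheses (and the conclusion genuinely fails for periodic \(I\), since then \(I+\rho^n(I)=I\) for suitable \(n\) no matter which iterate is taken); the hypothesis does hold in the application to Theorem~\ref{separateiterates}, where the divisor has infinite orbit, so your explicit flag is the correct reading of the lemma.
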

\begin{proof}
Let \(\wR\) be the completion of \(R\) along \(\fm\).  Suppose that \(J \subset \wR\) is an ideal with \(\rho(J) = J\).  We will write \(\rho_J : J \to J\) for the restriction of \(\rho\) to \(J\) (as a map of \(\wR\)-modules), and \(\sigma_J : J/\fm J \to J/\fm J\) for the induced map on the quotient by the maximal ideal.  For any such \(J\), the map \(\rho_J\) is a \(\rho\)-semilinear map of \(\wR\)-modules, in the sense that if \(f \in \wR\) and \(j \in J\), we have \(\rho_J(fj) = \rho(f) \, \rho_J(j)\).  The induced \(\sigma_J : J/\fm J \to J/\fm J\) is an \(r\)-semilinear map of \(K = R/\fm\)-modules, so that if \(f \in K\) and \(j \in J/\fm J\), we have \(\sigma_J(fj) = r(f) \sigma_J(j)\).

Since \(R\) is a regular local ring, it is a UFD and every height-\(1\) prime ideal is principal.  In particular, we can take \(I=(x)\) with \(x\) not an element of \(\fm^2\).  By the Cohen structure theorem, there exists some \(y\) in \(\wR\) such that \(\wR \cong K[[x,y]]\)~\cite[Proposition 10.16]{eisenbud}.   This isomorphism is not canonical: for example, \(\rho : K[[x,y]] \to K[[x,y]]\) is not necessarily a map of \(K\)-algebras, and the coefficient field \(K \subset K[[x,y]]\) may not be fixed by \(\rho\).  It will nevertheless be convenient to work with \(\wR\) as a power series ring: we fix some such \(y\) and an identification \(\wR \to K[[x,y]]\).  Observe that any prime ideal containing \(I\) must be either \(I\) itself or of the form \(I + \fm^k\); in particular, each of the ideals \(I+ \rho^n(I)\) we are considering is of this form for some \(k\).

Write \(\rho(x) = \sum_{i,j} a_{ij} x^i y^j\) and \(\rho(y) = \sum_{i,j} b_{ij} x^i y^j\), with \(a_{ij}\) and \(b_{ij}\) in the coefficient field \(K \subset \wR\) (note, however, that \(\rho(a_{ij})\) and \(\rho(b_{ij})\) are not necessarily elements of \(K\)).   We first consider the linear part \(\sigma_\fm : \fm/\fm^2 \to \fm/\fm^2\).   By semilinearity, if \(c\) and \(d\) are any elements of \(R/\fm\), we have
\begin{align*}
  \sigma_{\fm}(cx+dy) &= r(c) \, \sigma_\fm(x) + r(d) \, \sigma_{\fm}(y) \\
&= r(c) (a_{10} x + a_{01} y ) + r(d) (b_{10} x + b_{01} y) \\
&= (a_{10} \, r(c) + b_{10} \, r(d)) x + (a_{01} \, r(c) + b_{01} \, r(d)) y,
\end{align*}
which shows that if \(v \in \fm\) is regarded as a vector with respect to the basis given by \(x\) and \(y\), we have \(\sigma_{\fm}(v) = M \, r(v)\), where \(M = \left( \begin{smallmatrix} a_{10} & a_{01} \\ b_{10} & b_{01} \end{smallmatrix} \right)\).  Iterating gives
\begin{align*}
\sigma_{\fm}^n(v) &= M \, r(M \, r(\cdots M \, r(v))) \\
  &= M \, r(M) \cdots \, r^{n-1}(M) r^n(v) = \alpha_r(M,n)(r^n(v)),
\end{align*}
where \(r\) acts on matrices entrywise.  In particular, \(\sigma_{\fm}^n(x) = \alpha_r(M,n)(x)\).  If there is no \(n > 0\) for which \((1,0)\) is an eigenvector of of \(\alpha_r(M,n)\), then \(\sigma_{\fm}^n(x)\) has a nonzero \(y\) component for all nonzero \(n\). In this case, \(\rho^n(x)\) also has a nonzero \(y\) component for nonzero \(n\), and the the lemma holds with \(N = 1\).

If there is a value of \(n\) for which \(\sigma_{\fm}^n(x)\) has zero coefficient on \(y\), we can replace \(\rho\) by \(\rho^n\) and then assume that \(a_{01} = 0\).  The fact that \(\rho\) is an automorphism implies that \(M\) is invertible, so that \(a_{10}\) and \(b_{01}\) are both nonzero.  Since \(M\) is upper triangular, so too is \(\alpha_r(M,n)\) for any \(n\).  If we replace \(\rho\) by \(\rho^n\), then in the linear term, \(a_{10}\) is replaced by \(\alpha_r(a_{10},n)\) and \(b_{01}\) is replaced by \(\alpha_r(b_{01},n)\).  By Lemma~\ref{quotientsandroots}(4), applied to \(b_{01}\) and \(a_{10}\), there exists some \(m\) such that for every \(k\), either \(\alpha_r(b_{01}^k/a_{10},m)\) is an \(r^m\)-coboundary, or \(\alpha_r(b_{01}^k/a_{10},m)\) is not an \(r^m\)-root of unity.   Replacing \(\rho\) by \(\rho^m\), we may then assume that for every \(k\), if \(b_{01}^k/a_{10}\) is an \(r\)-root of unity, then \(b_{01}^k/a_{10}\) is an \(r\)-coboundary.

Because \(I=(x)\) is not invariant under \(\rho\), it must be that \(\rho(x)\) is not contained in \(I\). Let \(a_{0k} y^k\) be the lowest order nonzero term in \(\rho(x)\) that is not divisible by \(x\).  Because \(a_{01} = 0\), we have \(k \geq 2\).  

Let \(J\) be the ideal \((x,y^k) \subset \wR\).  We have \(\rho(y) \in \fm\), so \(\rho(y^k) \in \fm^k \subset J\).  Similarly, \(\rho(x) \in (x,y^k) = J\), and so \(\rho(J) = J\).  We now consider the map \(\sigma_J : J/\fm J \to J/\fm J\).  This is an \(r\)-semilinear map of \(K\)-vector spaces, so that if \(a \in K\) and \(v \in J/\fm J\), we have \(\sigma_J(av) = r(a) \, \sigma_J(v)\).   Since \(\sigma_J(x) = a_{10} x + a_{0k} y^k\) and \(\sigma_J(y^k) = b_{01}^k y^k\), the matrix for \(\sigma_J\) with respect to the basis given by \(x\) and \(y^k\) is
\[
P = \begin{pmatrix}
a_{10} & 0 \\
a_{0k} & b_{01}^k
\end{pmatrix}.
\]
The map \(\sigma_J\) is \(r\)-semilinear, so the matrix for the action of \(\sigma_J^n\) is \(P \, r(P) \cdots r^{n-1}(P) = \alpha_r(P,n)\).  For readability, set \(e = a_{0k}/a_{10}\) and \(\delta = b_{01}^k/a_{10}\), and consider the matrix 
\[
P_1 = a_{10}^{-1} P = 
\begin{pmatrix}
1 & 0 \\
e & \delta
\end{pmatrix}.
\]

Then \(\alpha_r(P_1,n) = \alpha_r(a_{10}^{-1},n) \alpha_r(P,n)\).  By the above reduction, if \(\delta\) is an \(r\)-root of unity, it is an \(r\)-coboundary.  We claim next that by \(r\)-semilinearity of \(\sigma_J\), for any \(n\) the matrix for \(\alpha_r(P_1,n)\) with respect to the basis given by \(x\) and \(y^k\) is given by
\[
\alpha_r(P_1,n) = \begin{pmatrix}
1 & 0 \\
\sum_{i=0}^{n-1}  \alpha_r(\delta,i) r^i(e) & \alpha_r(\delta,n)
\end{pmatrix}
\]
This is correct for \(n=1\), while we find
\begin{align*}
\alpha_r(P_1,n+1) &= \alpha_r(P_1,n) r^n(P_1) = 
\begin{pmatrix}
1 & 0 \\
\sum_{i=0}^{n-1} \alpha_r(\delta,i)  r^i(e) & \alpha_r(\delta,n)
\end{pmatrix}
\begin{pmatrix}
1 & 0 \\
r^n(e) & r^n(\delta)
\end{pmatrix} \\
&= \begin{pmatrix}
1 & 0 \\
\sum_{i=0}^{n-1}  \alpha_r(\delta,i) r^i(e) + \alpha_r(\delta,n)  r^n(e) & \alpha_r(\delta,n) r^n(\delta)
\end{pmatrix} \\
&= \begin{pmatrix}
1 & 0 \\
\sum_{i=0}^{n} \alpha_r(\delta,i)  r^i(e) & \alpha_r(\delta,n+1)
\end{pmatrix},
\end{align*}
as required. We will show that the lower-left entry of this matrix is non-zero for all nonzero \(n\), so that \(\sigma_J^n(x)\) (and hence \(\rho^n(x)\)) has nonzero coefficient on the term \(y^k\). Let \(S_n = \sum_{i=0}^{n-1} \alpha_r(\delta,i)  r^i(e)\).  Then
\begin{align*}
e + \delta \, r(S_n) &= e + \delta \, r\left( \sum_{i=0}^{n-1} \alpha_r(\delta,i)  r^{i}(e) \right)  = e + \delta \sum_{i=0}^{n-1} \frac{\alpha_r(\delta,i+1)}{\delta}  r^{i+1}(e) \\
&= e + \sum_{i=1}^{n} \alpha_r(\delta,i)  r^i(e)= \sum_{i=0}^{n} \alpha_r(\delta,i)  r^i(e) = S_n + \alpha_r(\delta,n) r^n(e).
\end{align*}

Suppose that \(S_n = 0\). Then \(\delta \, r(S_n) = 0\), and this implies that \(e = \alpha_r(\delta,n) r^n(e)\), whence \(\alpha_r(\delta,n) = e/r^n(e) = \tau_{r^n}(e)\).  Lemma~\ref{quotientsandroots}(3) implies that \(\delta\) is an \(r\)-root of unity, and in fact that \(\delta = \omega_n \tau_r(e)\) for some \(\omega_n\).  However, we have reduced to the case that if \(\delta\) is an \(r\)-root of unity, then \(\delta\) is an \(r\)-coboundary, and so it must be that \(\delta = \tau_r(h)\) for some \(h\). This gives \(\tau_r(h) = \omega_n \tau_r(e)\), and so \(\tau_r(h/e) = \omega_n\). By shifting hypothesis (S2), we have \(\omega_n = 1\), and so in fact \(\delta = \tau_r(e)\).  At last, we compute
\[
S_n = \sum_{i=0}^{n-1} \alpha_r(\delta,i) r^i(e) = \sum_{i=0}^{n-1} \alpha_r(\tau(e),i) r^i(e) = \sum_{i=0}^{n-1} \tau_{r^i}(e) r^i(e) = \sum_{i=0}^{n-1} \frac{e}{r^i(e)} r^i(e) = ne,
\]
which is nonzero because \(K\) has characteristic \(0\) and \(e \neq 0\).  Consequently \(S_n\) cannot be \(0\), which shows that \(\alpha_r(P_1,n)\), and thus \(\alpha_r(P,n)\), has nonzero entry in the lower-left.  In other words, for any positive integer \(n\), the iterate \(\sigma_J^n(x)\) has a nonzero coefficient on the \(y^k\) term. 

The claim of the theorem now holds with \(N = k\).  Since \(\rho^n(x)\) has no terms of pure \(y\) of degree lower than \(y^k\), and \(\rho^n(I)\) is the principal ideal generated by \(\rho^n(x)\), we have \(I + \rho^n(I) \subseteq I + \fm^N\).  On the other hand,  the coefficient on \(y^k\) in \(\rho^n(x)\) is nonzero, so \(\fm^N \subseteq I + \rho^n(I)\).  

The claim for negative \(n\) follows by the same argument. The analog of the matrix \(P\) for \(\rho^{-1}\) is \(r^{-1}(P^{-1})\), and the corresponding value of \(\delta\) is \(\delta^\prime = r^{-1}(\delta^{-1})\).  If \(\delta^\prime = \omega \, \tau(g)\) is an \(r\)-root of unity, then \(\delta = \omega \tau(r(g^{-1}))\), and so \(\omega = 1\).  Thus \(\delta^\prime\) is either not an \(r\)-root of unity, or is an \(r\)-coboundary, and the same argument applies with the same value of \(N\).  

The proves that \(I  + \rho^n(I) = I + \fm^N\) for all nonzero \(n\).  The corresponding equality of ideals in the non-completed ring \(R\) is immediate from faithful flatness of \(R \to \wR\).
\end{proof}

\begin{proof}[Proof of Theorem~\ref{separateiterates}]
Suppose that \(\phi : X \to X\) is as in the statement. Then \(\phi\) induces an automorphism of the local ring \(\phi^\ast : \cO_{X,C} \to \cO_{X,C}\).  Because \(C\) is smooth, \(\cO_{X,C}\) is a regular local ring, with residue field \(K(C)\).  After replacing \(\phi\) by an iterate, the induced automorphism \(r : K(C) \to K(C)\) can be assumed to be shifting by Lemma~\ref{curveshifting}, and the map \(\phi^\ast : \cO_{X,C} \to \cO_{X,C}\) is \(r\)-shifting.  Let \(\fm\) be the maximal ideal in \(\cO_{X,C}\), and let \(I \subset \cO_{X,C}\) the ideal defined by \(E\).  As \(E\) is a divisor smooth at the generic point of \(C\), \(I\) is a height-\(1\) prime contained in \(\fm\) and not containing \(\fm^2\). The divisor \(E\) is assumed to have infinite orbit under \(\phi\) and is irreducible, so the ideal \(I\) has infinite order under \(\phi^\ast\).  By Lemma~\ref{shiftinglemma}, there exists \(N\) such that \(I + \rho^n(I) = I + \fm^N\) for all nonzero values of \(n\). 

We now realize \(Y\) by a sequence of smooth blow-ups centered above \(C\).  Let \(\cI \subset \cO_X\) be the ideal sheaf of \(E\), and let \(\fn \subset \cO_X\) be the ideal sheaf of \(C\). The restriction of \(\cI\) to the stalk at the generic point of \(C\) is \(I \subset \cO_{X,C}\), while the restriction of \(\fn\) to this stalk is the maximal ideal \(\fm \subset \cO_{X,C}\).  For \(1 \leq i \leq N\) define \(\cI^{\leq i} = \prod_{k=1}^i \cI+\fn^k\).

Let \(X_0 = X\), and for \(1 \leq i \leq N\) let \(\sigma_i : X_i \to X\) be the blow-up of \(X\) along the ideal sheaf \(\cI^{\leq i}\).  Since \(\cI^{\leq i+1} = \cI^{\leq i} \cdot (\cI+\fm^{i+1})\), there is an induced morphism \(\pi_i : X_{i+1} \to X_i\).  Indeed, \(X_{i+1} \to X_i\) is the blow-up of \(X_i\) along the curve \(\pi_i^{-1}(C) \cap E\), and so \(X_{i}\) is smooth for every value of \(i\).  For example, \(\pi_1 : X_1 \to X\) is the blow-up of \(X\) along \(C\), and \(\pi_2 : X_2 \to X_1\) is the blow-up along \(E \cap F_1\), where \(F_1\) is the exceptional divisor of \(\pi_1\).

We claim that \(\phi\) lifts to an automorphism of \(X_i\) for every \(1 \leq i \leq N\).  Indeed, in the local ring \(\cO_{X,C}\) we have \(\rho(I + \fm^i) = \rho(I) + \rho(\fm^i) \subseteq (I+\fm^N) + \fm^i \subseteq I+\fm^i\).  The reverse inclusion follows from the same argument, and so \(\rho(I + \fm^i) = I + \fm^i\).  This holds at the generic point of \(C\), so it holds on some open set \(U \subset X\), and because neither \(\rho(I+\fm^i)\) or \(I + \fm^i\) has embedded points on \(C\) (except the generic point), this yields \(\phi^\ast(\cI+\fn^i) = \cI+\fn^i\) as ideal sheaves in \(\cO_X\).
It follows that \(\rho(\cI^{\leq i}) = \cI^{\leq i}\), and \(\phi\) lifts to an automorphism of \(X_i\).  Take \(Y = X_N\), so that \(\phi\) lifts to an automorphism of \(Y\) and condition (1) of the theorem is satisfied.  Note that the cosupport of \(\cI^{\leq i}\) is equal to \(C\), and so \(\pi : Y \to X\) is an isomorphism away from \(C\), as required by (2).

For every \(n > 0\), we have \(I+\rho^n(I) = I + \fm^{N}\) in the local ring \(\cO_{X,C}\).  Thus for each \(n\), there is an open set \(U_n \subset X\), containing the generic point of \(C\), such that \((\cI + \rho^n(\cI))\vert_{U_n}=  (\cI + \fn^{N})\vert_{U_n}\).  Let \(\pi_0 : Y_0 \to X\) be the blow-up of \(X\) along \(\cI+\fn^N\).    By \cite[Ch.\ II, Exercise 7.12]{hartshorne}, the strict transforms of \(E \vert_{U_n}\) and \(E_n \vert_{U_n}\) are disjoint in \(\pi_0^{-1}(U_n) \subseteq Y_0\). Since \(\cI^{\leq N} = \cI^{\leq N-1} \cdot (\cI + \fn^N)\), the map \(\pi : Y \to X\) factors through \(Y_0\), and so the corresponding strict transforms are disjoint in \(\pi^{-1}(U_n)\) as well.

This shows that \(\pi(E \cap E_n)\) does not contain all of \(C\).  For the intersections \(E_m \cap E_n\), we use the fact that \(\phi\) lifts to an automorphism \(\psi : Y \to Y\) to conclude that
\[
\pi(E_m \cap E_n) = \pi(\psi^m(E \cap E_{n-m})) = \phi^m(\pi(E \cap E_{n-m})).
\]
The intersection \(\pi(E \cap E_{n-m})\) does not contain all of \(C\), and \(C\) is \(\phi\)-invariant, we conclude point (3).  Note that for each \(m\) and \(n\), the image \(\pi(E_m \cap E_n)\) is at most a finite set of points, and so over a very general point of \(C\), the strict transforms of the divisors \(E_m\) and \(E_n\) are disjoint.
\end{proof}

\section{Construction of an equivariant fibration}

Suppose now that \(\phi : X \to X\) is an automorphism satisfying Condition (A$^\prime$).  We next apply Theorem~\ref{separateiterates} to pass to a birational model on which there are no \(E\)-rigid, \(\phi\)-periodic curves.

\begin{lemma}
\label{killperiodiccurves}
Suppose that  \(\phi : X \to X\) is an automorphism satisfying Condition (A$^\prime$).  After replacing \(\phi\) by an iterate, there exists a birational model \(\pi : Y \to X\) with the following properties:
\begin{enumerate}
\item The map \(\phi\) lifts to an automorphism \(\psi : Y \to Y\).
\item Write \(F\) for the strict transform of \(E\) on \(Y\).  No \(F\)-rigid curve is \(\psi\)-periodic.
\item Let \(g \subset F\) the strict transform of a general fiber \(f \subset E\).  Then \(F \cdot g < 0\).
\end{enumerate}
\end{lemma}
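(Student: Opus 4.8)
The plan is to isolate the finitely many $E$-rigid curves and destroy their periodicity one curve at a time using Theorem~\ref{separateiterates}, arranged so that the strict transform $F$ of $E$ stays abstractly isomorphic to $E$; this keeps the rigid curves of $F$ under control and makes the remaining assertions routine intersection theory. First I would record the relevant finiteness. By Lemma~\ref{exceptionalintersections} the surface $E$ is of type (R2), so $\alpha^2 = 0$, and by Proposition~\ref{ruledsurfacefacts} together with the remarks following it there are only finitely many $E$-rigid curves $C_1, \dots, C_r$. Any two irreducible curves whose classes lie on $\Ras\alpha$ have classes proportional to $\alpha$, hence satisfy $C_i \cdot C_j = 0$; being distinct irreducible curves they are therefore disjoint. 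After replacing $\phi$ by an iterate I may assume every $\phi$-periodic $C_j$ is in fact $\phi$-invariant, noting that a curve which is not $\phi$-periodic stays non-periodic under any iterate.

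Next I would build $Y$. For each $\phi$-invariant rigid curve $C_j$ I apply Theorem~\ref{separateiterates} with invariant curve $C_j$ and divisor $E$; the hypotheses hold since $E$ is smooth, contains $C_j$, and is not $\phi$-periodic by Condition (A$^\prime$). Each application alters $X$ only over $C_j$ (property (2) of that theorem), and the $C_j$ are pairwise disjoint, so the constructions can be composed into a single birational morphism $\pi : Y \to X$ to which $\phi$ lifts as an automorphism $\psi : Y \to Y$ after passing to a common iterate; the conclusions of Theorem~\ref{separateiterates} are inherited by iterates (Lemma~\ref{shiftingproperties}), so this is harmless. This gives conclusion (1). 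Crucially, every blow-up in the construction is centered along a curve contained in the successive strict transform of $E$ as a Cartier divisor, so the \emph{restriction} of each blow-up to the strict transform of $E$ is the blow-up of a Cartier divisor, hence an isomorphism. Therefore the strict transform of $E$ is unchanged at every stage, and $F \cong E$.

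For conclusion (2) the isomorphism $F \cong E$ identifies the $F$-rigid curves with the strict transforms $\bar C_j$ of the $C_j$. If $C_j$ was not $\phi$-periodic, then $\bar C_j$ is not $\psi$-periodic, since $\pi \circ \psi = \phi \circ \pi$ and $\pi(\bar C_j) = C_j$ force $\psi^k(\bar C_j) = \bar C_j$ to yield $\phi^k(C_j) = C_j$. If $C_j$ was periodic, hence now invariant, and $\bar C_j$ were $\psi$-periodic with $\psi^k(\bar C_j) = \bar C_j$, then $\bar C_j \subseteq F \cap \psi^k(F)$, whence $C_j = \pi(\bar C_j) \subseteq \pi\bigl(F \cap \psi^k(F)\bigr)$, contradicting property (3) of Theorem~\ref{separateiterates}. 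Thus no $F$-rigid curve is $\psi$-periodic. For conclusion (3) I would track the normal bundle: a single blow-up along a curve $C \subset E$ replaces the restricted exceptional class by $E\vert_E - [C]$ on $E \cong F$, so after the full construction $F\vert_F = E\vert_E - \sum_j m_j [C_j]$ with $m_j \geq 0$. The strict transform $g$ of a general fiber $f$ corresponds to $f$ under $F \cong E$, so
\[
F \cdot g = \Bigl( E\vert_E - \sum_j m_j [C_j] \Bigr) \cdot f = (E \cdot f) - \sum_j m_j \, (C_j \cdot f) < 0,
\]
using $E \cdot f < 0$ and $C_j \cdot f \geq 0$ (as $C_j \neq f$).

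I expect the conceptual heart to be the periodicity step in conclusion (2): the insight that property (3) of Theorem~\ref{separateiterates}, the separation of the divisors $\psi^n(F)$ above each $C_j$, is exactly what prevents the strict transform of a rigid curve from being periodic. The main technical nuisance will be the bookkeeping involved in applying Theorem~\ref{separateiterates} simultaneously over the disjoint curves $C_j$ and verifying that its three conclusions all survive the replacement of $\phi$ by a single common iterate.
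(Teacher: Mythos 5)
Your proof is correct, and its engine is the same as the paper's: Theorem~\ref{separateiterates} applied over the periodic rigid curves, with its conclusion (3) ruling out periodicity of their strict transforms, the isomorphism $F \cong E$ identifying the $F$-rigid curves with the strict transforms of the $E$-rigid curves, and the same projection-formula computation giving $F \cdot g < 0$. The difference is organizational. The paper inducts on the number $N(\phi_i)$ of periodic rigid curves: at each stage it makes a single periodic rigid curve invariant by an iterate, applies Theorem~\ref{separateiterates} once, checks (exactly as you do) that the strict transform of that curve is no longer periodic and that no new periodic curves are created, so the count strictly drops. You instead handle all the periodic rigid curves in one step, which needs two points the induction sidesteps: (i) the rigid curves are pairwise disjoint (their classes are proportional to $\alpha$ and $\alpha^2 = 0$), so the local constructions of Theorem~\ref{separateiterates} over the several curves glue into a single morphism $\pi : Y \to X$; and (ii) the conclusions of Theorem~\ref{separateiterates} survive passage to a common iterate. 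Both points hold, though your citation of Lemma~\ref{shiftingproperties} for (ii) is misplaced --- that lemma is internal machinery about shifting field automorphisms; what is actually needed is the easy remark that conclusion (1) passes to powers of the lift, that conclusion (3) for $\phi^k$ is a sub-collection of the statements comprising conclusion (3) for $\phi$, and that the blown-up ideal sheaves, being invariant under the individual iterates, remain invariant under a common multiple. The trade-off: your route does a single round of blow-ups with no induction, at the price of the disjointness and gluing bookkeeping; the paper's route is sequential but each step is local to one curve, so neither disjointness nor gluing ever arises.
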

\begin{proof}
We will construct a sequence of birational models \(X_i\) such that \(\phi\) lifts to an automorphism \(\phi_i : X_i \to X_i\).  Let \(X_0 = X\) and \(\phi_0 = \phi\). On each model \(X_i\), we will write \(F_i\) for the strict transform of the divisor \(E = F_0 \subset X_0\).   Let \(N(\phi_i)\) be the number of \(\phi_i\)-periodic \(F_i\)-rigid curves. This number is certainly finite, for there are only finitely many \(F_i\)-rigid curves.  Suppose that \(\xi \subset F_i\) is an \(F_i\)-rigid \(\phi_i\)-periodic curve.  Replacing \(\phi_i\) by a suitable iterate, we may assume that \(\xi\) is fixed by \(\phi_i\). Passing to an iterate does not change the set of periodic curves.

By Theorem~\ref{separateiterates}, there exists a birational map \(\pi_i : X_{i+1} \to X_i\) with the property that \(\phi_i\) lifts to an automorphism \(\phi_{i+1} : X_{i+1} \to X_{i+1}\), and such that \(\pi_i(F_{i+1} \cap \phi_{i+1}^n(F_{i+1}))\) does not contain \(\xi\) for any nonzero \(n\).
\[
\xymatrix{
X_{i+1} \ar[r]^{\phi_{i+1}} \ar[d]_{\pi_i} & X_{i+1} \ar[d]_{\pi_i} \\
X_i \ar[r]^{\phi_i} & X_i
}
\]

The map \(\pi_i \vert_{F_{i+1}} : F_{i+1} \to F_i\) is an isomorphism. Let \(\bar{\xi} \subset F_{i+1}\) be the curve mapping to \(\xi\), so \(\bar{\xi}\) is an \(F_{i+1}\)-rigid curve.  We claim that \(\bar{\xi}\) is not \(\phi_{i+1}\)-periodic: indeed, if \(\phi_{i+1}^n(\bar{\xi}) = \bar{\xi}\) for some \(n\), then \(\bar{\xi}\) is contained \(F_{i+1} \cap \phi_{i+1}^n(F_{i+1})\).  But \(\pi_i(\bar{\xi}) = \xi\), contradicting (3) of Theorem~\ref{separateiterates}.

On the other hand, if \(\gamma \subset F_{i+1}\) is a \(\phi_{i+1}\)-periodic curve, with \(\phi_{i+1}^n(\gamma) = \gamma\), then \(\phi_i^n(\pi_i(\gamma)) = \pi_i(\phi_{i+1}^n(\gamma)) = \pi_i(\gamma)\), so \(\pi(\gamma)\) is \(\phi_i\)-periodic. Hence passing to the blow-up \(X_{i+1}\) does not introduce any new periodic curves. The curve \(\bar{\xi}\) is not \(\phi_{i+1}\)-periodic, so the number of periodic \(F_i\)-rigid curves decreases and \(N(\phi_{i+1}) < N(\phi_i)\).  By induction, we eventually reach a model \(Y = X_n\) for which there are no \(F_n\)-rigid \(\phi_n\)-periodic curves.  

Let \(\pi : Y \to X\) be the blow-down, and let \(G_i\) be the exceptional divisors of \(\pi\).  Then
\[
F \cdot g = (\pi^\ast E - \sum_i a_i G_i) \cdot g = E \cdot f - \sum_i a_i (G_i \cdot g).
\]
The right side is negative, because \(E \cdot f < 0\), \(a_i \geq 0\), and \(g\) is not contained in any of the \(G_i\).  
\end{proof}

\begin{lemma}
\label{ximovesalot}
Suppose that \(\phi : X \to X\) is an automorphism satisfying Condition (A$^\prime$).  Then there exists some nonzero \(n\) and a curve \(\xi \subset E_n \cap E\) such that \(\xi \subset E\) is an \(E\)-covering curve, and \(\xi \subset E_n\) is an \(E_n\)-covering curve.
\end{lemma}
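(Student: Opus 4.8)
The plan is to argue by contradiction: I will assume that for every nonzero \(n\) and every irreducible curve \(\xi \subset E_n \cap E\), the curve \(\xi\) fails to be simultaneously \(E\)-covering and \(E_n\)-covering, and then derive a contradiction. First I would invoke Lemma~\ref{killperiodiccurves} to replace \(X\) by a birational model on which \(\phi\) still lifts to an automorphism, the strict transform of \(E\) (which I continue to call \(E\), a ruled surface isomorphic to the original) satisfies \(E \cdot g < 0\) on a general fiber, and \emph{no \(E\)-rigid curve is \(\phi\)-periodic}. The numerical input I need on this model is that every component of \(E_n \cap E\) lies on the extremal ray \(\Ras \alpha\), both as a curve in \(E\) and, by symmetry, as a curve in \(E_n\). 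This follows by rerunning the arguments of Lemmas~\ref{rationality} and~\ref{exceptionalintersections} with the nef, square-zero eigenvector \(\pi^\ast D\) in place of \(D\): one checks \((\pi^\ast D \cdot E \cdot E_n) = 0\) exactly as before, excludes \((\pi^\ast D)\vert_E = 0\) and \((\pi^\ast D)\vert_E \in \Ras[f]\) using Lemma~\ref{hodgeindex} and the distinctness of the contracted fiber rays, and concludes that \((\pi^\ast D)\vert_E\) is a multiple of \(\alpha\). Since \([E_n\vert_E]\) is then a positive multiple of the extremal class \(\alpha\) and its components are effective, each component has class on \(\Ras \alpha\); thus each is either \(E\)-rigid or \(E\)-covering. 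By the argument of Lemma~\ref{intersections}, \(E_n \cap E \neq \emptyset\) for infinitely many \(n\).

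Next I would split into two cases according to whether the set of curves occurring as components of \(\bigcup_{n \neq 0}(E_n \cap E)\) is infinite or finite. Suppose it is infinite. Since only finitely many curves on \(\Ras \alpha\) are \(E\)-rigid (Proposition~\ref{ruledsurfacefacts}), infinitely many distinct \(E\)-covering curves occur. By the contradiction hypothesis each such \(E\)-covering \(\xi \subset E_n \cap E\) must be \(E_n\)-rigid, so \(\phi^{-n}(\xi)\) is one of the finitely many \(E\)-rigid curves. Pigeonholing, there is a single \(E\)-rigid curve \(\rho\) and infinitely many \(n\) with \(\phi^{n}(\rho)\) a distinct \(E\)-covering curve inside \(E\); equivalently \(\rho\) is \(E_{-n}\)-covering for infinitely many \(n\). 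Choosing two such indices \(m_1 \neq m_2\), the curve \(\rho \subset E_{m_1} \cap E_{m_2}\) is covering in both surfaces, and applying \(\phi^{-m_1}\) produces a curve \(\phi^{-m_1}(\rho) \subset E \cap E_{m_2 - m_1}\) that is \(E\)-covering and \(E_{m_2 - m_1}\)-covering, contradicting the hypothesis.

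In the remaining case only finitely many curves occur, so some fixed curve \(\nu\) is a component of \(E_n \cap E\) for infinitely many \(n\). Then \(\phi^{-n}(\nu) \subset E\) for all these \(n\), and these curves all lie in the finite set, so \(\phi^{-n}(\nu) = \phi^{-n'}(\nu)\) for some \(n \neq n'\); hence \(\nu\) is \(\phi\)-periodic. If \(\nu\) is \(E\)-rigid this already contradicts Lemma~\ref{killperiodiccurves}. If instead \(\nu\) is \(E\)-covering, then for some \(n_0 \neq 0\) with \(\nu \subset E_{n_0}\) the contradiction hypothesis forces \(\nu\) to be \(E_{n_0}\)-rigid, so \(\phi^{-n_0}(\nu)\) is an \(E\)-rigid curve which is again \(\phi\)-periodic, once more contradicting Lemma~\ref{killperiodiccurves}. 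Thus both cases are impossible, so a covering-covering curve \(\xi\) exists on the model; pushing forward along \(\pi\), which is an isomorphism on the ruled surfaces involved and carries covering families to covering families, yields the desired curve on the original \(X\).

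The step I expect to be the main obstacle is the finite case: it is exactly here that the local dynamics enter, since the contradiction rests on Lemma~\ref{killperiodiccurves}, whose proof in turn runs Theorem~\ref{separateiterates} to eliminate \(\phi\)-periodic \(E\)-rigid curves. Without that reduction a finite intersection locus need not consist of \(\phi\)-periodic curves, and one cannot hope to separate the infinitely many contractible divisors \(E_n\) by a single blow-up and contradict the finite-dimensionality of \(N^1\). A secondary technical point, which I would check carefully, is that the numerical conclusions of Lemma~\ref{exceptionalintersections} genuinely survive the passage to the model of Lemma~\ref{killperiodiccurves}, where \(E\) is no longer the exceptional divisor of a Mori contraction but only a negative ruled surface equipped with the pulled-back eigenvector \(\pi^\ast D\).
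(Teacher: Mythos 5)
Your proof is correct, and it runs on the same engine as the paper's: the model of Lemma~\ref{killperiodiccurves}, the finiteness of the set of rigid curves, the absence of periodic rigid curves on that model, and pigeonhole arguments applied to the orbits of the curves in the pairwise intersections. The differences are organizational. The paper argues directly rather than by contradiction: it introduces the set \(\Upsilon\) of triples \((\nu,\xi,n)\) with \(\nu,\xi\) curves in the strict transform \(F\) of \(E\) (which you keep calling \(E\)) and \(\psi^n(\nu)=\xi\), splits on whether some single curve occurs as a first entry infinitely often, and in each case exhibits the covering--covering curve; your finite/infinite dichotomy for the set of components of \(\bigcup_{n\neq 0}(E_n\cap E)\) plays the same role, and the two case analyses are essentially parallel (your Case B, where periodicity contradicts Lemma~\ref{killperiodiccurves}, corresponds to the paper's subcases in which curves repeat). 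One place where you are more explicit than the paper: the rigid-or-covering dichotomy needs every component of \(F_n\cap F\) to have class on \(\Ras\alpha\) in both surfaces, and the paper uses this silently. Your fix --- rerunning Lemmas~\ref{rationality} and~\ref{exceptionalintersections} with \(\pi^\ast D\) --- does work, but the exclusions of the cases \((\pi^\ast D)\vert_F = 0\) and \((\pi^\ast D)\vert_F \in \Ras [f]\) need small patches on the model, since the \(F_n\) are no longer exceptional divisors of extremal contractions; for instance, the rays \(\Ras [F_n]\) are still distinct in \(N^1(Y)\) because they push forward to the distinct rays \(\Ras [E_n]\) in \(N^1(X)\). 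A lighter route avoids rerunning altogether: \(\pi\vert_F : F \to E\) is an isomorphism with \((\pi^\ast D)\vert_F = (\pi\vert_F)^\ast(D\vert_E)\), it carries each component of \(F_n\cap F\) to a curve inside \(E_n\cap E\), and it identifies the cones of curves of the two surfaces, so the conclusion of Lemma~\ref{exceptionalintersections} proved on \(X\) transfers verbatim to the model.
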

\begin{proof}
Let \(\pi : Y \to X\) be the birational model constructed in Lemma~\ref{killperiodiccurves}, with \(F \subset Y\) the strict transform of \(E\).  Consider the set
\[
\Upsilon = \set{(\nu,\xi,n) : \psi^n(\nu) = \xi},
\]  
where \(\nu\) and \(\xi\) are irreducible curves in \(F\), and \(n\) is a nonzero integer.  As in Lemma~\ref{intersections}, the fact that \(F \cdot g < 0\) by Lemma~\ref{killperiodiccurves}(3) implies that \(F_n \cap F\) is nonempty for infinitely many \(n\), and so the set \(\Upsilon\) is infinite.

Suppose first that some curve \(\nu \subset F\) appears in infinitely many elements of \(\Upsilon\), so that there are infinitely many nonzero integers \(n_j\) with \(\psi^{n_j}(\nu) = \xi_j\) a curve in \(F\).  If there are distinct \(i\) and \(j\) for which \(\xi_i\) and \(\xi_j\) both coincide with some curve \(\xi\), then \(\psi^{n_i}(\nu) = \psi^{n_j}(\nu) = \xi\).  But then \(\psi^{n_i-n_j}(\nu) = \nu\).  Since there are no \(\psi\)-periodic \(F\)-rigid curves, the curve \(\nu\) must be \(F\)-covering.  But then \(\psi^{n_i-n_j}(\nu) = \nu\) is also an \(F_{n_i-n_j}\)-covering curve.

Otherwise, the curves \(\xi_j\) are all distinct.  There are only finitely many \(F\)-rigid curves, so there exist distinct \(i\) and \(j\) so that \(\xi_i\) and \(\xi_j\) are both \(F\)-covering curves. Then \(\psi^{n_i}(\nu) = \xi_i\) and \(\psi^{n_j}(\nu) = \xi_j\) implies that \(\psi^{n_i-n_j}(\xi_j) = \psi^{n_i}(\nu) = \xi_i\).  Then \(\xi_i\) is an \(F\)-covering curve and an \(F_{n_i-n_j}\)-covering curve.

Suppose instead that no curve \(\nu\) appears as the first entry of infinitely many elements of \(\Upsilon\), so that infinitely many different curves appear.  There are only finitely many \(F\)-rigid curves, so there exists an infinite sequence \(\nu_1,\nu_2,\ldots\) of \(F\)-covering curves such that \(\psi^{n_i}(\nu_i) = \xi_i\) is contained in \(F\).  If \(\xi_i\) is an \(F\)-covering curve for some value of \(i\), then \(\xi_i\) is both an \(F\)-covering curve and an \(F_{n_i}\)-covering curve.  If no \(\xi_i\) is \(F\)-covering, then there must exist distinct \(i\) and \(j\) with \(\xi_i = \xi_j\), as there are only finitely many \(F\)-rigid curves.  But then \(\psi^{n_i-n_j}(\nu_i) = \nu_j\), and \(\nu_j\) is both an \(F\)-covering curve and a \(F_{n_i-n_j}\)-covering curve.  

The map \(\pi\vert_{F_n} : F_n \to E_n\) is an isomorphism, and if \(\xi \subset F_n\) is an \(F_n\)-covering curve, then \(\pi(\xi) \subset E_n\) is an \(E_n\)-covering curve.   The above shows that there is a curve \(\xi \subset F\) that is a \(F\)-covering curve and an \(F_n\)-covering curve for some nonzero \(n\); the curve \(\pi(\xi)\) is then an \(E\)-covering curve and an \(E_n\)-covering curve, as required.
\end{proof}

\begin{remark}
The proof here is somewhat more convoluted than that sketched in the introduction.  The reason is that some care is required to handle the case when \(E\) is a ruled surface of type (R2b) with both \(E\)-rigid curves and \(E\)-covering curves, as in (6) of Example \ref{lotsofruledsurfaces}.  In essence, we first blow up any \(\phi\)-invariant \(E\)-rigid curves, and then argue as in the first case discussed in the introduction, when there do not exist any \(E\)-rigid curves.

It is worth considering what happens when \(E\) is of type (R2a).  The argument essentially hinges on property (2) of Lemma~\ref{killperiodiccurves}, the fact that after a sequence of blow-ups we can assume there are no \(\psi\)-periodic \(F\)-rigid curves.  In this case, the proof above is finished after the second paragraph, because there are no \(F\)-covering curves.  For a ruled surface of Type (R2a), the \(F_n\) would all be disjoint on the blow-up, which is impossible by (3) of the same lemma.  This case was illustrated in Figure~\ref{erigidcurve} of the introduction.
\end{remark}

We are at last in position to construct an invariant fibration for a map satisfying Condition (A$^\prime$).  

\begin{lemma}
\label{imprimitive}
Suppose that \(\phi\) is an automorphism satisfying Condition (A$^\prime$).  Then the map \(\phi : X \to X\) is imprimitive and satisfies Case 2(c) of Theorem~\ref{precisemain}.
\end{lemma}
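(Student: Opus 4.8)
The plan is to turn the covering curve produced by Lemma~\ref{ximovesalot} into a $\phi$-equivariant fibration over a surface, thereby exhibiting $\phi$ as imprimitive and verifying Case 2(c). Fix a nonzero $n$ and an irreducible curve $\xi \subset E_n \cap E$ that is simultaneously $E$-covering and $E_n$-covering, so that $[\xi] \in \Ras\alpha$ and (for a suitably general such $\xi$) $\xi$ is smooth and $E$, $E_n$ meet transversally along it. Because $\xi$ moves in a covering family on each of the two smooth surfaces $E$ and $E_n$, the surface normal bundles $N_{\xi/E}$ and $N_{\xi/E_n}$ are degree-$0$ line bundles with a section, hence both isomorphic to $\cO_\xi$. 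By Lemma~\ref{exceptionalintersections} we have $[E_n|_E] \in \Ras\alpha$ and $[\xi] \in \Ras\alpha$ with $\alpha^2 = 0$, so computing inside $E$ gives $E_n \cdot \xi = (E_n|_E \cdot \xi)_E = 0$; the sequence $0 \to N_{\xi/E_n} \to N_{\xi/X} \to N_{E_n/X}|_\xi \to 0$ then forces $\deg N_{\xi/X} = 0$, and the analogous sequence for $\xi \subset E$ gives $E \cdot \xi = 0$ as well. The two trivial subsheaves $N_{\xi/E}$ and $N_{\xi/E_n}$ of $N_{\xi/X}$ are distinct (by transversality), and a short argument on the degree-$0$ quotient shows the sequence splits; thus $N_{\xi/X} \cong \cO_\xi^{\oplus 2}$.

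Two consequences follow. First, $\phi^n(\xi)$ is again $E_n$-covering, hence algebraically equivalent in $E_n$ to $\xi$, so the iterate $\phi^n$ fixes the Hilbert-scheme component $H := \Hilb_{[\xi]}(X)$ and acts on it by an automorphism. Replacing $\phi$ by $\phi^n$ (permitted by the standing convention of passing to an iterate) and writing $E_k = \phi^k(E)$, we may assume $\phi$ itself preserves $H$ and induces an automorphism $\psi : H \to H$. Second, the deformations of $\xi$ cover $X$: for each $k$ the curves of $H$ lying in $E_k$ cover that surface, so the swept-out locus $Z \subseteq X$ contains the infinitely many distinct divisors $E_k$ (distinct since $E$ is not $\phi$-periodic), forcing $Z = X$. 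Because $N_{\xi/X} \cong \cO_\xi^{\oplus 2}$, any nonzero first-order deformation of $\xi$ is nowhere vanishing, so nearby members of the family are disjoint and only finitely many curves of $H$ pass through a general point. Combined with $Z = X$, this shows $\dim H = 2$, that the universal family $\Univ_{[\xi]}(X)$ maps birationally onto $X$, and that through a general $x \in X$ there passes a unique curve $\xi_x$ of the family.

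Sending $x$ to the point $[\xi_x]$ now defines a dominant rational map $\rho_0 : X \rat S$, where $S$ is the normalization of the reduced component $H$, a projective surface. Since $\phi$ carries $\xi_x$ to $\xi_{\phi(x)}$, the map is equivariant: $\rho_0 \circ \phi = \psi \circ \rho_0$. As $1 \le \dim S = 2 < 3 = \dim X$ and the fibers of $\rho_0$ are one-dimensional, this exhibits $\phi$ as imprimitive in the sense of Definition~\ref{primitivedef}. To match Case 2(c) verbatim, resolve the indeterminacy of $\rho_0$ $\phi$-equivariantly: there is a birational morphism $\pi : X' \to X$ to which $\phi$ lifts as an automorphism $\bar\phi : X' \to X'$, together with a morphism $\rho = \rho_0 \circ \pi : X' \to S$ satisfying $\rho \circ \bar\phi = \psi \circ \rho$. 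The general fiber of $\rho$ is the one-dimensional curve $\xi_x$, and after possibly blowing up further we may arrange that every fiber of $\rho$ is one-dimensional, exactly as Case 2(c) requires.

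The main obstacle is the passage from the numerical data to the honest two-dimensional family: it is not enough that $\xi$ has many first-order deformations, one must know that it moves in a family of dimension \emph{exactly} $2$ sweeping out all of $X$ with a single curve through a general point. The computation $N_{\xi/X} \cong \cO_\xi^{\oplus 2}$ is the crux of this, and it rests in turn on the transversality of $E$ and $E_n$ along a suitably general $\xi$ and on the vanishings $E \cdot \xi = E_n \cdot \xi = 0$ coming from Lemma~\ref{exceptionalintersections}; verifying that a $\xi$ covering both surfaces can be chosen smooth and in general position is the delicate technical point. A secondary issue is performing the resolution of indeterminacy $\phi$-equivariantly while guaranteeing that all fibers of $\rho$ remain one-dimensional.
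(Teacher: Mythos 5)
Your overall strategy---pass to the Hilbert scheme, compute the normal bundle of \(\xi\), and use the universal family as the equivariant fibration---is the paper's, but two of your steps have genuine gaps, and they are precisely the points where the paper has to work hardest. The first is the normal bundle computation. You obtain \(N_{\xi/X} \cong \cO_\xi \oplus \cO_\xi\) by choosing \(\xi\) ``suitably general,'' smooth, and with \(E\), \(E_n\) transverse along it. No such genericity is available: the intersection \(E \cap E_n\) has only finitely many irreducible components, and Lemma~\ref{ximovesalot} hands you one specific curve among them that is both \(E\)-covering and \(E_n\)-covering; you cannot perturb it inside \(E \cap E_n\), and nothing in Condition (A\(^\prime\)) rules out \(E\) and \(E_n\) being tangent along \(\xi\). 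In that case the two sub-line bundles \(N_{\xi/E}\) and \(N_{\xi/E_n}\) of \(N_{\xi/X}\) agree at the generic point of \(\xi\) and your splitting argument collapses. The paper never invokes transversality: the \(\phi_H\)-invariant irreducible component \(\Hilb_{[\xi]}(X)\) contains the infinitely many distinct curves \(\gamma_k\) coming from the orbit of the covering family, so \(\dim \Hilb_{[\xi]}(X) \geq 2\), hence \(h^0(\xi,N_{\xi/X}) \geq 2\); then the single sequence \(0 \to N_{\xi/E} \to N_{\xi/X} \to N_{E/X}\vert_\xi \to 0\), with \(\deg N_{E/X}\vert_\xi = E \cdot \xi = 0\), gives \(h^0(\xi,N_{\xi/X}) \leq 2\), and equality simultaneously yields \(\dim \Hilb_{[\xi]}(X) = 2\) and the triviality of \(N_{\xi/X}\). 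The dimension bound comes from the dynamics, not from the local geometry of \(E \cap E_n\).

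The second gap is birationality of \(\rho_\xi : \Univ_{[\xi]}(X) \to X\). You deduce it from ``nearby members of the family are disjoint, so only finitely many curves pass through a general point''; that gives only generic finiteness. A two-dimensional family of curves with trivial normal bundles sweeping out a threefold can perfectly well have \(d > 1\) members through a general point: the paper's remark following this very proof recalls the lines on a smooth cubic threefold, where every line has trivial normal bundle, the Fano surface of lines is two-dimensional, and the universal family is \(6\)-to-\(1\) over the threefold. Excluding \(d > 1\) requires the specific geometry of the divisors \(E_n\): if a second member \(\eta\) of the family passes through a point \(x \in E_n\), then \(\eta \cdot E_n = 0\) (by numerical equivalence with \(\xi\)) forces \(\eta \subset E_n\); since \([f_n]\) and \([\xi_n]\) span \(\NEb(E_n)\) with \(E_n \cdot f_n = -1\) and \(E_n \cdot \xi_n = 0\), the class of \(\eta\) must lie on \(\Ras [\xi_n]\), whence \((\eta \cdot \xi_n)_{E_n} = 0\), contradicting that \(\eta\) and the covering curve \(\xi_n\) through \(x\) meet at \(x\). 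Without this step the map \(X \rat \Hilb_{[\xi]}(X)\) is not defined, and imprimitivity is not established. (Two smaller points: fixing an \emph{irreducible} component of \(\Hilb(X)\) requires a further iterate of \(\phi\), since \(\phi_H\) a priori only preserves a connected component and may permute its finitely many irreducible components; and your final equivariant resolution of indeterminacy is both unjustified and unnecessary---the paper simply takes \(X^\prime = \Univ_{[\xi]}(X)_{\red}\), which carries the lifted automorphism automatically and maps to \(\Hilb_{[\xi]}(X)_{\red}\) with all fibers one-dimensional by flatness of the universal family.)
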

\begin{proof}

Let \(\Hilb(X)\) be the Hilbert scheme of \(X\), with \(\tau : \Univ(X) \to \Hilb(X)\) the universal family.  Write \(\rho : \Univ(X) \subset X \times \Hilb(X) \to X\) for the evaluation map.  Given a closed subscheme \(V \subset X\), write \([V]\) for the corresponding point on the Hilbert scheme \(\Hilb(X)\).

If \(X\) is any variety and \(\phi : X \to X\) is an automorphism, there is an induced automorphism \(\phi_H : \Hilb(X) \to \Hilb(X)\), together with an induced automorphism of the universal family \(\phi_U : \Univ(X) \to \Univ(X)\).  The map \(\phi_H\) permutes the connected components of \(\Hilb(X)\).  Let \(\xi_n \subset E_n\) be an \(E_n\)-covering curve.  For every value of \(n\), the curve \(\xi_n\) moves in a flat family covering \(E_n\), and this deformation determines a curve \(\gamma_n \subset \Hilb(X)\).  

By the final part of Lemma~\ref{ximovesalot}, there is a curve \(\xi \subset E\) which is both an \(E\)-covering curve and an \(E_n\)-covering curve for some nonzero \(n\).  The curves \(\gamma_0\) and \(\gamma_n\) intersect at \([\xi]\) and so lie in the same connected component of \(\Hilb(X)\). Because \(\phi^{n}_H(\gamma_0) = \gamma_n\),  this component is invariant under \(\phi_H\).   Replacing \(\phi\) by \(\phi^{n}\), we may assume that the connected component containing \(\gamma_0\) is invariant under \(\phi_H\).

The connected component of the Hilbert scheme containing \([\gamma_0]\) has only finitely many irreducible components and these are permuted by the map \(\phi_H\), so we may replace \(\phi\) by a suitable iterate and assume that there is an irreducible component \(\Hilb_{[\xi]}(X)\) of \(\Hilb(X)\) containing all of the curves \(\gamma_n\) and fixed by \(\phi_H\).

Now, the curve \(\gamma_n = \phi_H^n(\gamma_0)\) is contained in \(\Hilb_{[\xi]}(X)\) for every \(n\).  Because the divisors \(E_n\) are distinct, so too are the curves \(\gamma_n\), and the irreducible component \(\Hilb_{[\xi]}(X)\) contains infinitely many curves. It follows that \(\Hilb_{[\xi]}(X)\) has dimension at least \(2\), and so 
\[ 
\dim H^0(\xi,N_{\xi/X}) = \dim T_{[\xi]} \Hilb_{[\xi]}(X) \geq \dim \Hilb_{[\xi]}(X) \geq 2.
\]
Next we show that in fact equality holds in the above, so that \(\Hilb_{[\xi]}(X)\) has dimension exactly \(2\).  There is a short exact sequence of normal bundles
\[
\xymatrix{
0 \ar[r] & N_{\xi/E} \ar[r] & N_{\xi/X} \ar[r] & N_{E/X}\vert_\xi \ar[r] & 0 }
\]
The first term \(N_{\xi/E}\) is a trivial \(\cO_\xi\).  Let \(\xi_n\) be a general \(E_n\)-covering curve.  The intersection \(E \cap E_n \subset E_n\) is a union of \(E_n\)-covering curves.  These are disjoint from \(\xi_n\), and so \(E \cdot \xi_n = 0\).  Since \(\xi\) and \(\xi_n\) are numerically equivalent, we have \(E \cdot \xi = 0\) as well, so that \(N_{E/X}\vert_\xi\) has degree \(0\).   Now consider the exact sequence in cohomology
\[
\xymatrix{
0 \ar[r] & H^0(\xi,N_{\xi/E}) \ar[r] & H^0(\xi,N_{\xi/X}) \ar[r] & H^0(\xi,N_{E/X}\vert_\xi)  \ar[r]^\delta & H^1(\xi,N_{\xi/E}) }
\]
The first term has dimension \(1\), while the third term \(H^0(\xi,N_{E/X}\vert_\xi)\) has dimension \(1\) if \(N_{E/X}\vert_\xi\) is trivial and \(0\) otherwise.  This yields \(H^0(\xi,N_{\xi/X}) \leq 2\), with equality if and only if \(\dim N_{E/X}\vert_\xi = 1\) and the map \(\delta\) is \(0\). We have already seen \(H^0(\xi,N_{\xi/X}) \geq 2\), and so it must be that equality holds and \(N_{E/X}\vert_\xi\) is trivial.  The boundary map \(\delta : H^0(\xi,N_{E/X}\vert_\xi) \to H^1(\xi,N_{\xi/E})\) then computes the extension class of the normal bundle sequence, and since \(\delta\) is zero the extension of normal bundles is split and \(N_{\xi/X} \cong \cO_\xi \oplus \cO_\xi\) is a trivial rank-\(2\) bundle.
We conclude that \(\dim \Hilb_{[\xi]}(X) = 2\), that \([\xi]\) is a smooth point, and that \(N_{\xi/X} \cong \cO_\xi \oplus \cO_\xi\) is trivial.  In particular, \(\Hilb_{[\xi]}(X)\) is generically smooth.

Take \(\Univ_{[\xi]}(X)\) to be the component of \(\Univ(X)\) lying over \(\Hilb_{[\xi]}(X)\).  The image of \(\rho_\xi : \Univ_{[\xi]}(X) \to X\) contains every \(E_n\), and these divisors are Zariski dense, so the map \(\rho_\xi\) is surjective.  Because \(X\) and \(\Univ_{[\xi]}(X)\) both have dimension \(3\), the map \(\rho_\xi\) is generically finite.

We claim next that \(\rho_\xi\) is in fact birational. Suppose that \(\rho_\xi\) is generically \(d\) to \(1\), with \(d > 1\). Because \(\xi\) is smooth and irreducible, the irreducible component \(\Hilb_{[\xi]}(X)\) is birational to an irreducible component \(\Chow_{[\xi]}(X)\) of the corresponding Chow variety of \(X\), parametrizing cycles equivalent to \(\xi\)~\cite[Cor.\ I.6.6.1]{kollarrationalcurves}.  The map \(\Chow_{[\xi]}(X) \to X\) is generically \(d\) to \(1\) as well. (More simply, there is an open set \(U \subset \Hilb_{[\xi]}(X)\) parametrizing smooth cycles numerically equivalent to \(\xi\), and the preimage of a general point of \(X\) under \(\rho_\xi^{-1}\) is given by \(d\) points in \(\tau^{-1}(U)\).)

The divisors \(E_n\) are dense on \(X\), so there exists a point \(x\) on some \(E_n\) for which the preimage of \(\rho_\xi\) consists of \(d\) distinct points.  One of points of \(\rho_\xi^{-1}(x)\) parametrizes the \(E_n\)-covering curve \(\xi_n\) through \(x\).  Suppose that one of the others parametrizes a cycle \(\eta\) on \(X\).  We have \(\eta \cdot E_n = 0\) because \(\eta\) is numerically equivalent to \(\xi\) on \(X\).  Since \(\eta\) passes through the point \(x \in E_n\), it must be that \(\eta\) is contained in \(E_n\).  However, \(E_n \cdot f_n = -1\) and \(E_n \cdot \xi = 0\).  As \(f\) and \(\xi\) generate the two rays on \(\NEb(E_n)\), the intersection \(E_n \cdot \eta\) must be negative unless \(\eta\) is numerically equivalent to \(\xi\) on \(E_n\). But then \((\xi \cdot \eta)_{E_n} = (\xi \cdot \xi)_{E_n} = 0\).  This is impossible, because \(\xi\) and \(\eta\) meet at the point \(x \in E_n\). Consequently we must have \(d=1\), so that  \(\rho : \Univ_{[\xi]}(X) \to X\) is birational, and there exists an inverse map \(\rho_{\xi}^{-1} : X \rat \Univ_{[\xi]}(X)\).

The automorphism \(\phi_U : \Univ_{[\xi]}(X) \to \Univ_{[\xi]}(X)\) permutes the fibers of \(\Univ_{[\xi]}(X) \to \Hilb_{[\xi]}(X)\).  The schemes \(\Hilb_{[\xi]}(X)\) and \(\Univ_{[\xi]}(X)\) might not be varieties, for they could be nonreduced away from \([\xi]\).  However, taking the induced maps on the underlying reduced schemes, we obtain a map \(\pi : X \rat \Univ_{[\xi]}(X)_{\red} \to \Hilb_{[\xi]}(X)_{\red}\) which realizes \(\phi : X \to X\) as an imprimitive automorphism over a \(2\)-dimensional projective variety.
\[
\xymatrix@C=0em{
& X \ar[rr]^\phi \ar@{-->}[dd] && X \ar@{-->}[dd] \\ 
\Univ_{[\xi]}(X)_{\red}  \ar[rr] \ar[ur]^{\rho_\xi} \ar[dr]_\tau && \Univ_{[\xi]}(X)_{\red} \ar[ur]^{\rho_\xi} \ar[dr]_\tau \\ 
& \Hilb_{[\xi]}(X)_{\red} \ar[rr]^{\phi_H} &&  \Hilb_{[\xi]}(X)_{\red}
} 
\]

Take \(X^\prime = \Univ_{[\xi]}(X)_{\red}\) and \(S = \Hilb_{[\xi]}(X)_{\red}\).  The map \(\rho : X^\prime \to S\) has all fibers one-dimensional, since \(\Univ_{[\xi]}(X) \to \Hilb_{[\xi]}(X)\) is flat.  This shows that \(\phi\) satisfies the conclusions of Case 2(c) of Theorem~\ref{precisemain}.  
\end{proof}

\begin{remarks}
If one is willing to allow \(X^\prime\) in Theorem~\ref{precisemain}, Case 2(c) to be a non-reduced scheme, then the map \(\rho : X^\prime \to S\) may be assumed to be flat.  However, it is possible that flatness is lost after replacing \(\Hilb_{[\xi]}(X)\) with \(\Hilb_{[\xi]}(X)_{\red}\).

The map \(\phi_H : S \to S\) is a positive entropy automorphism of a surface, so a general point has dense orbit.  The fiber over the point \(\phi_H^n([\xi]) \in S\) is a smooth curve isomorphic to \(\xi\), and so there is a Zariski dense set of fibers of \(X^\prime \to S\) which are isomorphic to \(\xi\). It is possible, however, that there are some singular or nonreduced fibers.

The fact that \(\Hilb_{[\xi]}(X)\) generically parametrizes smooth curves with trivial normal bundle does not itself imply that \(\Univ_{[\xi]}(X) \to X\) must be birational; it really is necessary to use the specific geometry of this setting. If \(Y \subset \P^4\) is a general smooth cubic threefold, then through a general point there are six lines \(\ell\), each with trivial normal bundle.  The component \(\Hilb_{[\ell]}(Y)\) in this case is a smooth surface of general type (a so-called Fano surface), and the universal family \(\Univ_{[\ell]}(Y) \to Y\) is generically \(6\) to \(1\).
\end{remarks}

\begin{example}
It is worth pointing out an example where the map \(\Univ_{[\xi]}(X)_\red \to X\) is not an isomorphism.  Consider again Example~\ref{productex}, with \(\sigma : S \to S\) an automorphism of a rational surface and \(\sigma \times \id : S \times C \to S \times C\) an automorphism.  Let \(p \in S\) be a fixed point of \(\sigma\) not contained in any \((-1)\)-curve, and let \(q\) be any point on \(C\). Take \(X\) to be the blow-up of \(S \times C\) at \((p,q)\), with exceptional divisor \(F\), so that \(\sigma \times \id\) lifts to an automorphism \(\phi : X \to X\).
The divisorial contraction \(\pi : X \to Y\) may be taken to blow down \(\ell \times C\), where \(\ell \subset S\) is a \((-1)\)-curve.  The exceptional divisor \(E\) of \(\pi\) is disjoint from the exceptional divisor \(F\) of \(X \to S \times C\).

As the curve \(\xi = p^\prime \times C\) moves to \(p \times C\), the flat limit is given as the union of the strict transform of \(p \times C\) and a line in the exceptional divisor \(F\), which depends on the direction from which \(p^\prime\) approaches \(p\).   The corresponding component \(\Hilb_{[\xi]}(X)\) is isomorphic to \(\Bl_p S\), and the universal family \(\Univ_{[\xi]}(X)_\red \to X\) is birational.  However, the preimage in \(\Univ_{[\xi]}(X)_\red\) of \(p \times z\) for any \(z \neq q\) is \(1\)-dimensional.
\end{example}

Lemma~\ref{imprimitive} completes the proofs of the theorems claimed in the introduction.

\thmblowupautos*
\begin{proof}
The proof is by induction on \(\rho(X/M)\), the number of blow-ups used in constructing \(X\).  When \(\rho(X/M) = 0\), we have \(X = M\) and there is nothing to check.  Otherwise, let \(\pi : X \to Y\) be the last of the sequence of blow-ups in the construction of \(X\), with exceptional divisor \(E\).  If \(E\) has infinite orbit under \(\phi\), then \(\pi\) must be the blow-up of a curve by Lemma~\ref{notpoint}, and \(\phi\) must be imprimitive by Lemma~\ref{imprimitive}.   Otherwise, some iterate of \(\phi\) descends to an automorphism  \(\psi : Y \to Y\).  Since \(Y\) is also a smooth blow-up of \(M\) and has smaller Picard rank, we conclude by induction that \(\psi\) is imprimitive, which means that \(\phi\) is imprimitive as well.
\end{proof}

\thmmain*
\begin{proof}[Proof of Theorems~\ref{main} and \ref{precisemain}]
If \(\phi\) does not satisfy Condition (A), the theorem was proved in Section~\ref{geometric} as a consequence of Lemmas~\ref{nefcanonical}, \ref{morifiberspace}, and \ref{periodicexceptional}.  If \(\phi\) does satisfy Condition (A), it satisfies Condition (A$^\prime$) by Lemmas~\ref{intersections},~\ref{notpoint}, and~\ref{numdim}.  But if \(\phi\) satisfies Condition (A$^\prime$), it satisfies 2(c) of Theorem~\ref{precisemain} by Lemma~\ref{imprimitive}.
\end{proof}

\thminduction*
\begin{proof}[Proof of Corollary~\ref{induction}]
Let \(\pi : X \to Y\) be a contraction in the MMP for \(X\).  Since \(\phi\) is primitive, \(\pi\) can not be a Mori fiber space by Lemma~\ref{morifiberspace}.
So \(\pi\) must be a divisorial contraction.  If the exceptional divisor \(E\) had infinite orbit, then \(\phi\) would be imprimitive.  Hence \(E\) is \(\phi\)-periodic, and some iterate of \(\phi\) descends to an imprimitive automorphism of \(Y\).  If \(Y\) is not smooth, then the claim is proved.  Otherwise, we replace \(X\) with \(Y\) and repeat the argument; since the Picard rank decreases at every step, the process must eventually yield a non-smooth threefold on which \(\phi\) induces an automorphism.
\end{proof}

\thminvariantstuff*
\begin{proof}[Proof of Corollary~\ref{invariantstuff}]
If \(K_X\) is not numerically trivial and \(X\) admits a primitive, positive entropy automorphism, then \(K_X\) is not nef by Lemma~\ref{nefcanonical}.  Let \(\pi : X \to Y\) be a contraction of the \(K_X\)-MMP.  The map \(\pi\) is not a Mori fiber space, because \(X\) admits a primitive automorphism of infinite order.  Hence \(\pi\) is a divisorial contraction.  If the exceptional divisor \(E\) of \(\pi\) is not \(\phi\)-periodic, then \(\pi(E)\) is a curve by Lemma~\ref{notpoint}, and \(\phi\) is imprimitive by Lemma~\ref{imprimitive}.  Hence \(E\) must be \(\phi\)-periodic, and the divisor \(\bigcup_n \phi^n(E)\) is \(\phi\)-invariant.
\end{proof}
Note that in Corollary~\ref{invariantstuff} it is not necessary to replace \(\phi\) by an iterate to obtain the conclusion.  If \(E\) is invariant for some iterate \(\phi^m\), then \(\bigcup_{n=0}^{m-1} \phi^n(E)\) is invariant for \(\phi\).

\section{The problem with flips}
\label{singularcase}

\newcommand{\Xb}{X_{\mathrm{sing}}}
\newcommand{\Xt}{X_{\mathrm{term}}}
\newcommand{\Xs}{X_{\mathrm{smth}}}

\newcommand{\Sb}{S_{\mathrm{sing}}}
\newcommand{\Sc}{X_{\mathrm{can}}}
\newcommand{\Ss}{X_{\mathrm{smth}}}

A shortcoming of the proof of Theorem~\ref{precisemain} is that if a divisorial contraction \(\pi : X \to Y\) gives rise to a singular variety \(Y\), no further progress is possible. There are two basic obstructions to extending the arguments to the singular case. First, running the MMP on \(Y\) might require performing a flip \(\sigma : Y \rat Y^+\).  If the flipping curve \(C \subset Y\) has infinite orbit under \(\phi\), then \(\phi\) induces only a pseudoautomorphism of \(Y^+\).   Second, even if there is a divisorial contraction \(\pi : Y \to Z\), the exceptional divisor might not be isomorphic to a smooth ruled surface \(E\) and Lemma~\ref{rationality} does not apply; some contractions of this type are described in \cite{kollarmori}.

To illustrate the difficulty with flips, we describe the first steps of a run of the the MMP for Example~\ref{otrational} of Oguiso and Truong.  Let \(\omega = (-1+\sqrt{3} i)/2\) and \(E = \C/(\Z \oplus \omega \Z)\).    Consider the action of \(\tau : E \to E\) given by multiplication by \(-\omega\), a sixth root of unity.  There are six points on \(E\) with nontrivial stabilizer under the action of \(\tau\):
\begin{enumerate}
\item \(\Stab(x) = \ang{\tau}\): \(\set{0}\),
\item \(\Stab(x) = \ang{\tau^2}\): \(\set{(2+\omega)/3,(1+2\omega)/3}\),
\item \(\Stab(x) = \ang{\tau^3}\): \(\set{1/2,(1+\omega)/2,\omega/2}\).
\end{enumerate}

Consider the threefold \(E \times E \times E\), with the diagonal action of \(\tau\), denoted \(\tau_\Delta : E \times E \times E \to E \times E \times E\).  Let \(r : E \times E \times E \to \Xb\) be the quotient by this cyclic action.  A point \((x,y,z)\) on \(E \times E \times E\) is fixed by \(\tau^k_\Delta\) if and only if each of its entries is fixed by \(\tau^k\), so the points on \(E \times E \times E\) with nontrivial stabilizer are:
\begin{enumerate}
\item \(\Stab(x,y,z) = \ang{\tau_\Delta}\).  There is a unique point of this form, giving rise to a singularity of type \(\nicefrac{1}{6}(1,1,1)\) on \(\Xb\).
\item \(\Stab(x,y,z) = \ang{\tau_\Delta^2}\).  There are \(3\) points fixed by \(\tau^2\) on \(E\), and hence \(3^3 -1 = 26\) points with stabilizer \(\ang{\tau^2_\Delta}\).  The orbits of these points have size \(2\), giving \(13\) singularities of type \(\nicefrac{1}{3}(1,1,1)\) on \(\Xb\).
\item \(\Stab(x,y,z) = \ang{\tau_\Delta^3}\).  There are \(4^3-1 = 63\) points with stabilizer \(\tau^3_\Delta\).  The orbits have size \(3\), giving \(21\) singularities of type \(\nicefrac{1}{2}(1,1,1)\) on \(\Xb\).
\end{enumerate}

Let us briefly recall some standard facts about singularities of type \(\nicefrac{1}{d}(1,1,1)\).  Let \(\omega\) be a \(d\)th root of unity,
and let \(\Z/d\Z\) act on \(\C[x,y,z]\) by multiplication by \(\omega\) in each variable.  The ring of invariants of the action is generated by monomials \(x^i y^j z^k\) with \(i+j+k =d\), and so the singularity is isomorphic to that of the projective cone over the degree-\(d\) Veronese embedding \(\P^2 \to \P^N\).  

\newcommand{\Yr}{Y_{\text{res}}}
\newcommand{\Yc}{Y_{\text{cone}}}

Write \(\Yc\) for this cone, and let \(\pi : \Yr = \P_{\P^2}(\cO \oplus \cO(d)) \to \Yc\) be blow-up at the cone point. The singularity \(\nicefrac{1}{d}(1,1,1)\) is resolved by a single blow-up, and the exceptional divisor \(E\) is isomorphic to \(\P^2\), with normal bundle \(\cO_{\P^2}(-d)\).  Write \(K_{\Yr} = \pi^\ast K_{\Yc} + a E\), so \(K_{\Yr}+E = \pi^\ast K_{\Yc} + (a+1)E\).  By adjunction we have \(K_E = (a+1)E\vert_E\).  But \(K_E = \cO_{\P^2}(-3)\), while \(E\vert_E =  \cO_{\P^2}(-d)\), yielding \(a = \frac{3}{d}-1\).

When \(d=2\) we have \(a=\frac{1}{2}\), which shows that the singularities of type \(\nicefrac{1}{2}(1,1,1)\) are terminal.  When \(d=3\) we obtain \(a = 0\), and so \(\nicefrac{1}{3}(1,1,1)\) is canonical but not terminal.  At last, when \(d=6\), this yields \(a=-\frac{1}{2}\), and so the singular point of type \(\nicefrac{1}{6}(1,1,1)\) is klt but not canonical. The map \(\pi : \Xs \to \Xb\) which blows up each singular point is a resolution.   Write \(E_6\) for the  exceptional divisor over the \(\nicefrac{1}{6}(1,1,1)\) point, and \(E_3^i\) and \(E_2^j\) for the exceptional divisors over the singular points of type \(\nicefrac{1}{3}(1,1,1)\) and \(\nicefrac{1}{2}(1,1,1)\). Let \(\ell_6\), \(\ell_3^i\), and \(\ell_2^j\) be lines in the corresponding exceptional divisors. Since \(r\) is \'etale in codimension \(1\) and \(K_{E \times E \times E} = 0\),  the computation of the discrepancies gives \(K_{\Xs} = \frac{1}{2} \sum_i E^i_2 -\frac{1}{2} E_6\). 

Now consider a run of the MMP on \(\Xs\).  Each of the curves \(\ell_2^j\) has \(K_{\Xs} \cdot \ell_2^j = -1\) and spans an extremal ray on \(\NEb(\Xs)\).  
There is a sequence of divisorial contractions of type (E5), contracting all of the divisors \(E_2^j\) and yielding a variety \(\Xt\).  The model \(\Xt\) can be obtained directly from \(\Xb\) by resolving the singularities of types \(\nicefrac{1}{6}(1,1,1)\) and \(\nicefrac{1}{3}(1,1,1)\), but not blowing up the terminal singularities of type \(\nicefrac{1}{2}(1,1,1)\). The canonical class is given by \(K_{\Xt} = -\frac{1}{2} E_6\), and the anticanonical class is effective.

Let \(C\) be the strict transform on \(\Xt\) of \(\bar{C} = r(E \times 0 \times 0) \subset \Xb\).  Since \(\bar{C}\) passes through the singularity of type \(\nicefrac{1}{6}(1,1,1)\), \(C\) meets the exceptional divisor \(E_6\), and so \(K_{\Xt} \cdot C < 0\).  We claim that in fact \(C\) spans a \(K_{\Xt}\)-negative extremal ray on \(\NEb(\Xt)\).

Let \(\Sb = (E \times E)/\tau\), and let \(\pi_{23} : \Xb \to \Sb\) be the projection onto the last two coordinates.  Consider the composition \(\bar{\pi}_{23} : \Xt \to \Xb \to \Sb\).  The fiber of \(\pi_{23}\) over \((0,0)\) is the curve \(\bar{C}\).  There are three singular points of \(\Xb\) on \(r(E \times 0 \times 0)\), of types \(\nicefrac{1}{6}(1,1,1)\) and \(\nicefrac{1}{3}(1,1,1)\), and \(\nicefrac{1}{2}(1,1,1)\).  The first two of these are blown up on \(\Xt\), and so \(\bar{\pi}_{23}^{-1}(0,0) \subset \Xt\) is the union of \(C\) and two exceptional divisors \(E_6\) and \(E_3^0\), which are disjoint and meet \(C\) at one point each.  Since the relative canonical class \(K_{\Xt/\Sb}\) is \(\bar{\pi}_{23}\)-numerically equivalent to \(-\frac{1}{2}E_6\), the only \(K_{\Xt/\Sb}\)-negative curve contracted by \(\bar{\pi}_{23}\) is \(C\).  In particular, there exists a flip \(\sigma : \Xt \rat X^+\) of \(C\) over \(\Sb\).  The same map is a flip for the \(K_{\Xt}\)-MMP.  Observe that \(C\) passes through a singular point of \(\Xt\), as any flipping curve on a terminal threefold must.

It is straightforward to explicitly describe the flip \(\sigma\) by a resolution; the map is locally a familiar one, described e.g.\ in~\cite[\S 6.20]{debarre}.  Let \(\Xt^0\) denote the threefold obtained by blowing up the unique singular point on \(C\), with exceptional divisor \(E_2^0\). There is a resolution of \(\sigma\) illustrated in the following diagram.
\[
\xymatrix{
& W \ar[dl]_h \ar[dr]^i \\
\Xt^0 \ar@{-->}[rr]^\psi \ar[d]^\pi && X^\prime \ar[d]_{\pi^+} \\
\Xt \ar@{-->}[rr]^\sigma \ar[dr]_f && X^+ \ar[dl]^g \\
& Z
}
\]
The map \(\pi\) is the blow-up at the singular point on \(C\), with exceptional divisor \(E_2^0\) isomorphic to \(\P^2\).  The strict transform \(C^0\) of \(C\) on \(\Xt^0\) is a rational curve which does not meet any of the singular points of \(\Xt^0\).  The normal bundle of \(C^0\) is \(\cO_{\P^1}(-1) \oplus \cO_{\P^1}(-1)\), and the map \(\psi : \Xt^0 \rat X^\prime \) is the standard flop of \(C^0\): \(h\) blows up \(C^0\), with exceptional divisor \(F\) isomorphic to \(\P^1 \times \P^1\), and \(i\) contracts \(F\) along the other ruling.   The strict transform of \(E_2^0\) on \(X^\prime\) is isomorphic to a Hirzebruch surface \(\F_1\), and \(\pi^+\) is the contraction of \(E_2^0\) to \(\P^1\).

There is an action of \(\SL_3(\Z)\) on \(\Xt\) by automorphisms, and the image of \(C\) under any automorphism is another flipping curve.  Since \(C\) has infinite orbit under the action of this group, there are infinitely many flipping curves on \(\Xt\).  If \(\phi\) is such an automorphism, the induced map \(\phi^+ : X^+ \rat X^+\) might no longer be an automorphism; it becomes indeterminate along the flipped curve.   We next turn our attention to Question~\ref{kmmquestion} from the introduction.

Observe that for a surface of non-negative Kodaira dimension, the number of \(K_X\)-negative extremal rays on \(\NEb(X)\) is always finite: if \(K_X\) is numerically equivalent to an effective divisor \(D\), any \(K_X\)-negative irreducible curve must be one of the finitely many components of \(D\).  If \(\dim X = 3\) and \(\kappa(X) \geq 0\), there are again only finitely divisors that can be contracted, and a given divisor can be contracted in only finitely many ways.  Since a non-uniruled threefold can not admit a Mori fiber space structure, a variety with infinitely many \(K_X\)-negative rays must contain infinitely many flipping curves.  The example \(Y\) will be constructed as a branched cover of the variety \(\Xt\).

\thmmanyflips
\begin{proof}
Let \(\bar{\pi}_3 : \Xt \to \Xb \to E/\tau \cong \P^1\) be the third projection, with \(0 \in \P^1\) the image of \(0 \in E\).  The curve \(C\) lies in the fiber \(\bar{\pi}_3^{-1}(0)\).  There are infinitely many flipping curves for the \(\Xt\)-MMP over \(\P^1\), since the orbit of \(C\) under the subgroup of \(\Aut(\Xt)\) induced by matrices of the form
\[
M = \left( \begin{array}{c|c} \SL_2(\Z) & 0 \\\hline 0 & 1 \end{array} \right)
\]
is still infinite, and this subgroup commutes with \(\bar{\pi}_3\).  All the curves in this orbit are \(K_{\Xt}\)-flipping curves contained in the fiber \(\bar{\pi}_3^{-1}(0)\).  Now, let \(\Gamma\) be a curve of genus at least \(1\) with a map \(\beta : \Gamma \to \P^1\) not ramified over any point of the finite set \(\pi_3(\Sing \Xb)\). Let \(\bar{\beta} : Y \to \Xt\) be the branched cover of \(\Xt\) constructed as the pull-back family \(Y = \Xt \times_{\P^1} \Gamma\).
\[
\xymatrix{
Y \ar[r]^{\bar{\beta}} \ar[d]^{\bar{\pi}} & \Xt \ar[d]_{\bar{\pi}_3} \\
\Gamma \ar[r]^\beta & \P^1
}
\]
Because the ramification locus of \(\bar{\beta}\) is disjoint from the singularities of \(X\), the variety \(Y\) has only terminal singularities. 
The general fibers of \(\bar{\pi}\) are smooth abelian surfaces \(E \times E\), and since \(\Gamma\) is not rational, through a general point of \(Y\) there does not pass any rational curve, so \(Y\) is not uniruled.   Let \(E_6^1,\ldots,E_6^d\) be the preimages of the divisor \(E_6\) on \(\Xt\), where \(d = \deg(\beta)\).  We have
\[
K_Y = \bar{\beta}^\ast K_{\Xt} + R = -\frac{1}{2} \sum_{i=1}^d E_6^i + \bar{\pi}^\ast R_\Gamma,
\]
where \(R_\Gamma \subset \P^1\) is the ramification divisor of \(\beta\).
Let \(\gamma\) be any point with \(\beta(\gamma) = 0\).  The fiber of \(\bar{\pi}\) over \(\gamma\) is isomorphic to the fiber of \(\Xt\) over \(0 \in \P^1\), and the restriction of \(K_{Y/\Gamma}\) to this fiber is isomorphic to the restriction of \(K_{\Xt/\P^1}\).  The curves in \(Y\) which map to \(C\) and its orbit under \(\Aut(\Xt)\) are all contracted by \(\bar{\pi}\), and so give \(K_Y\)-negative curves which are extremal on \(\NEb(Y/\Gamma)\).  These curves can be flipped over \(\Gamma\), and indeed define \(K_Y\)-flipping contractions.  As a result, there are infinitely many \(K_Y\)-negative extremal rays on \(\NEb(Y)\).
\end{proof}

The fiber \(\bar{\pi}^{-1}(\gamma)\) is a union of six two-dimensional components, illustrated in Figure~\ref{flipspicture}.  One is a rational surface \(S_0\), which is a partial desingularization of the quotient \((E \times E)/\tau\). There are five singularities of \(Y\) of type \(\nicefrac{1}{2}(1,1,1)\) lying on \(S_0\); as singularities of the surface, these points are ordinary double points.  The other five components are the preimages on \(Y\) of the exceptional divisors of the map \(\Xt \to \Xb\), and are mutually disjoint. One of these, \(E_6\), is the resolution of a \(\nicefrac{1}{6}(1,1,1)\) singularity, while the other four, \(E_3^i\), are resolutions of \(\nicefrac{1}{3}(1,1,1)\) singularities.  The divisor \(E_6\) intersects \(S_0\) along a \((-6)\)-curve in \(S_0\), while the \(E_3^i\) intersect along \((-3)\)-curves.  The flipping curves are contractible curves on \(S_0\) which pass through a singular point \(Y\) and meet the divisor \(E_6\).  These lift to certain \((-1)\)-curves on the minimal resolution of \(S_0\).

\begin{figure}[htb]
\centering
\begin{tikzpicture}

\def\baseangle{15}
\draw (-3,0) to [out=\baseangle,in=180-\baseangle] (0,0) to [out=-\baseangle,in=180+\baseangle] (3,0);
\node [below] at (0,0) {$\gamma$};
\node [above] at (-1,0.2) {$\Gamma$};
\fill (0,0) circle [radius=2pt];

\def\ellipsehole{65}
\draw (-2,3) ellipse (0.5 and 1.2);
\draw (-2,2.4) to [out=\ellipsehole,in=-\ellipsehole] (-2,3.6);
\draw [shorten >=-3,shorten <=-3](-2,2.4) to [out=180-\ellipsehole,in=180+\ellipsehole] (-2,3.6);
\node at (-2,4.5) {$E \times E$};

\def\ellipsehole{65}
\draw (2,3) ellipse (0.5 and 1.2);
\draw (2,2.4) to [out=\ellipsehole,in=-\ellipsehole] (2,3.6);
\draw [shorten >=-3,shorten <=-3](2,2.4) to [out=180-\ellipsehole,in=180+\ellipsehole] (2,3.6);

\draw (0,3) ellipse (0.7 and 1.2);
\node at (0.4,1.65) {$S_0$};

\def\sixgray{80}
\def\xcenter{0}
\def\ycenter{3.8}
\def\curvelen{0.5}
\def\height{0.7}
\def\angle{15}
\draw [fill=gray!\sixgray] (\xcenter-\curvelen/2,\ycenter) -- (\xcenter+\curvelen/2,\ycenter) to [out=90-\angle,in=0] (\xcenter,\ycenter+\height) to [out=180,in=90+\angle] (\xcenter-\curvelen/2,\ycenter);
\node at (0.5,4.5) {$E_6$};

\node at (0,2.25) {$\times$};

\def\threegray{20}

\def\curvelen{0.5}
\def\height{0.7}
\def\angle{15}
\def\xcenter{-0.5}
\def\ycenter{3.5}
\draw [fill=gray!\threegray] (\xcenter,\ycenter-\curvelen/2) -- (\xcenter,\ycenter+\curvelen/2) to [out=180-\angle,in=90] (\xcenter-\height,\ycenter) to [out=270,in=180+\angle](\xcenter,\ycenter-\curvelen/2);

\def\xcenter{-0.5}
\def\ycenter{2.5}
\def\curvelen{0.5}
\def\height{0.7}
\def\angle{15}
\draw [fill=gray!\threegray] (\xcenter,\ycenter-\curvelen/2) -- (\xcenter,\ycenter+\curvelen/2) to [out=180-\angle,in=90] (\xcenter-\height,\ycenter) to [out=270,in=180+\angle](\xcenter,\ycenter-\curvelen/2);

\def\xcenter{0.5}
\def\ycenter{2.5}
\def\curvelen{0.5}
\def\height{-0.7}
\def\angle{15}
\draw [fill=gray!\threegray] (\xcenter,\ycenter-\curvelen/2) -- (\xcenter,\ycenter+\curvelen/2) to [out=\angle,in=90] (\xcenter-\height,\ycenter) to [out=-90,in=-\angle](\xcenter,\ycenter-\curvelen/2);

\def\xcenter{0}
\def\ycenter{2.9}
\def\curvelen{0.5}
\def\height{0.7}
\def\angle{15}
\draw [fill=gray!\threegray ] (\xcenter-\curvelen/2,\ycenter) -- (\xcenter+\curvelen/2,\ycenter) to [out=90-\angle,in=0] (\xcenter,\ycenter+\height) to [out=180,in=90+\angle] (\xcenter-\curvelen/2,\ycenter);

\node at (1,3) {$E_3^i$};

\node at (0,2.25) {$\times$};
\node at (0.25,2.65) {$\times$};
\node at (-0.34,3.65) {$\times$};
\node at (-0.5,3.0) {$\times$};
\node at (0.42,3.5) {$\times$};

\draw [very thick] (0,2) -- (0,4);
\node at (-0.24,2.58) {$C$};

\draw [densely dotted] (0,0.15) -- (0,1.7);
\draw [densely dotted] (-2,0.3) -- (-2,1.7);
\draw [densely dotted] (2,-0.1) -- (2,1.7);

\fill (0,2.9) circle [radius=2pt];
\fill (0,3.8) circle [radius=2pt];
\end{tikzpicture}
\caption{The family \(\bar{\pi} : Y \to \Gamma\)}
\label{flipspicture}
\end{figure}

\section{Threefolds with two commuting automorphisms}

If \(X\) is a projective threefold, the rank of an abelian subgroup of \(\Aut(X)\) is at most \(2\)~\cite{dinhsibony}.  The study of threefolds achieving this upper bound, i.e.\ admitting two commuting, positive entropy automorphisms, is a problem of particular interest.  When \(X\) is not rationally connected, it is a result of Zhang that \(X\) must be birational to a torus quotient~\cite{dqzhangmaxrank}.  

In this section we point out applications of Theorem~\ref{main} in the case that \(X\) is smooth and rationally connected.  These rely on the following result in the two-dimensional case.

\begin{proposition}[\cite{dinhsibony}]
\label{commuteonsurface}
Suppose that \(S\) is a smooth projective surface and that \(\phi\) and \(\psi\) are two commuting, positive entropy automorphisms of \(S\).  Then there exist integers \(m\) and \(n\) so that \(\phi^m = \psi^n\).
\end{proposition}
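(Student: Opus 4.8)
The plan is to view $N^1(S)$, equipped with the intersection form, as a Lorentzian space (of signature $(1,\rho(S)-1)$ by the Hodge index theorem) and to treat $\phi^\ast$ and $\psi^\ast$ as isometries of the integral lattice $N^1(S)_\Z$. The surface analogue of Lemma~\ref{numdim} (Perron--Frobenius on the nef cone together with Hodge index) shows that a positive entropy automorphism $g$ induces a \emph{hyperbolic} isometry $g^\ast$: the spectral radius $\lambda(g)>1$ is a simple eigenvalue with a nef isotropic eigenvector $\theta^+_g$, there is a companion isotropic eigenvector $\theta^-_g$ with eigenvalue $\lambda(g)^{-1}$, every remaining eigenvalue has modulus $1$, and the orthogonal complement $W_g=\ang{\theta^+_g,\theta^-_g}^\perp$ is negative definite.

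First I would show that $\phi$ and $\psi$ share an axis. Since $\phi\circ\psi=\psi\circ\phi$ the maps $\phi^\ast,\psi^\ast$ commute, so $\psi^\ast$ preserves the eigenline $\R\theta^+_\phi$ (one-dimensional, as $\lambda(\phi)$ is simple), and likewise $\R\theta^-_\phi$. As $\psi^\ast$ preserves the nef cone, $\psi^\ast\theta^+_\phi=\mu\,\theta^+_\phi$ with $\mu>0$; if $\mu\ne\lambda(\psi)^{\pm1}$ then $\mu=1$, forcing $\theta^+_\phi$ to be a nonzero isotropic vector fixed by $\psi^\ast$, which is impossible because the fixed space of $\psi^\ast$ meets the hyperbolic plane $\ang{\theta^+_\psi,\theta^-_\psi}$ trivially and is otherwise contained in the negative definite $W_\psi$. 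Hence $\{\theta^+_\phi,\theta^-_\phi\}=\{\theta^+_\psi,\theta^-_\psi\}$ as sets of rays; after replacing $\psi$ by $\psi^{-1}$ if necessary we may write $\theta^+:=\theta^+_\phi\parallel\theta^+_\psi$ and $\theta^-:=\theta^-_\phi\parallel\theta^-_\psi$, so that both $\phi^\ast$ and $\psi^\ast$ fix the rays $\R\theta^\pm$.

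The heart of the argument, and the step that genuinely uses $\dim S=2$, is to deduce that $\log\lambda(\phi)$ and $\log\lambda(\psi)$ are rationally dependent. Each $\gamma_{m,n}=\phi^{\ast m}\psi^{\ast n}$ fixes the axis through $\theta^\pm$ with translation length $\ell(m,n)=m\log\lambda(\phi)+n\log\lambda(\psi)$, and $\ang{\phi^\ast,\psi^\ast}$ is a subgroup of the isometry group $O(N^1(S)_\Z)$, which is \emph{discrete} in $O(N^1(S)\otimes\R)\cong O(1,\rho(S)-1)$. If the ratio $\log\lambda(\phi)/\log\lambda(\psi)$ were irrational, then by Dirichlet's theorem there would be infinitely many pairs $(m,n)$ with $|\ell(m,n)|\le 1$; these isometries all lie in the set of axis-preserving maps of bounded translation length, which---precisely because the transverse space $W$ is \emph{definite}, so its orthogonal group is compact---is relatively compact in $O(1,\rho(S)-1)$. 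A discrete group meets a compact set in finitely many points, so two distinct pairs would give $\gamma_{a,b}=\mathrm{id}$ with $(a,b)\ne(0,0)$, and evaluating on $\theta^+$ yields $\lambda(\phi)^a\lambda(\psi)^b=1$, contradicting irrationality. Thus $\lambda(\phi)^q=\lambda(\psi)^p$ for some nonzero integers $p,q$.

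It then remains to upgrade this numerical coincidence to an equality of automorphisms. Setting $g=\phi^q\psi^{-p}$, the eigenvalue of $g^\ast$ on $\theta^+$ is $1$, so $g$ has zero entropy: $g^\ast$ acts as the identity on the hyperbolic plane $\ang{\theta^+,\theta^-}$ and as an orthogonal transformation of the definite lattice $W$, hence lies in a compact subgroup of $O(1,\rho(S)-1)$. Being also an element of the discrete group $O(N^1(S)_\Z)$, it has finite order, so $(g^k)^\ast=\mathrm{id}$ on $N^1(S)$ for some $k\ge1$. The main obstacle is the last inch: passing from triviality on $N^1(S)$ to triviality as an automorphism, i.e.\ controlling the kernel of $\Aut(S)\to \GL(N^1(S))$. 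For the surfaces that carry a positive entropy automorphism this kernel is understood---it is finite for rational, K3, and Enriques surfaces (so a further power of $g$ is the identity), while for abelian surfaces it consists of translations, for which the commutation relation with the positive entropy map $\phi$ pins the translation part down after a final power. In every case $g^N=\mathrm{id}$ for some $N\ge1$, which is exactly $\phi^{qN}=\psi^{pN}$; taking $m=qN$ and $n=pN$ completes the proof.
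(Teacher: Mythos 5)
The paper contains no proof of this proposition at all: it is imported verbatim from Dinh--Sibony, so there is no internal argument to compare yours against. Judged on its own, your proof is correct in its main structure, and it is essentially the standard argument for this statement (the same circle of ideas used by Dinh--Sibony and Cantat): view $\phi^\ast,\psi^\ast$ as hyperbolic isometries of the Lorentzian lattice $N^1(S)_\Z$; use commutativity and simplicity of the leading eigenvalue to show the two maps share their pair of isotropic eigenrays; use discreteness of the integral isometry group, together with compactness of the set of axis-preserving isometries of bounded translation length (this is exactly where definiteness of the transverse space enters), to force $\log\lambda(\phi)$ and $\log\lambda(\psi)$ to be rationally dependent; conclude that $g=\phi^q\psi^{-p}$ is elliptic, hence of finite order on $N^1(S)$. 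All of these steps are sound as written, including the exclusion of the eigenvalue $\mu=1$ via the fact that the fixed space of $\psi^\ast$ lies in the negative definite part and so contains no nonzero isotropic vector.

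The one place where you lean on outside facts, and where the write-up is slightly loose, is the final descent from $(g^k)^\ast=\mathrm{id}$ on $N^1(S)$ to $g$ having finite order in $\Aut(S)$. Three remarks. First, the case analysis should be phrased birationally: by Cantat's classification (cited in the paper), $S$ is a blow-up of $\P^2$, a K3, an Enriques, or an abelian surface, and the non-minimal cases (say a K3 or abelian surface blown up at periodic points) are not literally covered by your list; they reduce to the minimal case because an automorphism acting trivially on $N^1(S)$ fixes each exceptional class, hence descends to the minimal model fixing the blown-up points (which in the abelian case already forces finiteness of the kernel). Second, for a minimal abelian surface the kernel of $\Aut(S)\to\GL(N^1(S))$ is not just translations --- it contains $-\mathrm{id}$, for instance --- so you should first raise $g^k$ to a power killing the finite group of linear parts; after that your commutation argument is correct: commuting with $\phi=t_b\circ\beta$ forces the translation vector to be fixed by $\beta$, and positive entropy forces $\beta$ to have no eigenvalue $1$ on $H^1$, so $\mathrm{Fix}(\beta)$ is finite and the translation is by a torsion point. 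Third, finiteness of the kernel for rational surfaces carrying a positive entropy automorphism is itself a nontrivial known theorem (via Lieberman's theorem it amounts to triviality of $\Aut^0(S)$ for such surfaces), so it deserves a citation rather than the phrase ``is understood.'' None of these points breaks the argument; with those adjustments and references your proof is complete.
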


In analogy with Theorem~\ref{main}, we show if \(\phi : X \to X\) and \(\psi : X \to X\) are commuting, positive entropy automorphisms of a smooth threefold, then either \(\phi\) and \(\psi\) must both be imprimitive over the same surface (in which case Proposition~\ref{commuteonsurface} gives further results), or there is a singular variety \(Y\) on which \(\phi\) and \(\psi\) both induce automorphisms.  In this case we can say nothing more, though it is perhaps evidence that even in the rationally connected case, quotient constructions may be the best source of examples.

\begin{theorem}
Suppose that \(X\) is a smooth, rationally connected threefold and that \(\phi\) and \(\psi\) are commuting, positive entropy automorphisms of \(X\).  After replacing both \(\phi\) and \(\psi\) with appropriate iterates, either:
\begin{enumerate}
\item there exists a singular threefold \(Y\) with terminal singularities and \(\rho(Y) < \rho(X)\) such that \(\phi\) and \(\psi\) both induce automorphisms of \(Y\); or
\item there exists a map \(\pi : X \rat V\) with \(\dim V < \dim X\) and an automorphism \(\tau \in \Aut(X/V)\) such that \(\phi = \psi \circ \tau\).
\end{enumerate}
If \(M\) is a smooth threefold with no positive entropy automorphisms and \(X\) is a smooth blow-up of \(M\), (2) must hold.
\end{theorem}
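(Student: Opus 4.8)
The plan is to reduce to the surface theorem of Dinh and Sibony (Proposition~\ref{commuteonsurface}) by producing a common lower-dimensional base, combining the single-map structure theory of Theorem~\ref{precisemain} with the commutation of $\phi$ and $\psi$. First I would record the reductions. Since $X$ is rationally connected, $\kappa(X)=-\infty$, so $K_X$ is neither numerically trivial nor nef; this excludes Case (1) of Theorem~\ref{precisemain} for both maps and, by the cone theorem (Theorem~\ref{conetheorem}), produces a $K_X$-negative extremal contraction $\pi:X\to Z$. As automorphisms fix $K_X$, both $\phi^\ast$ and $\psi^\ast$ preserve $\NEb_{K_X\geq 0}(X)$ and permute the $K_X$-negative extremal rays. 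I would fix one such ray $R$ and split into cases by the type of its contraction.

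If $R$ defines a Mori fiber space, there are at most three such rays (Theorem~\ref{conetheorem}), so after replacing $\phi,\psi$ by iterates both fix $R$; by Lemma~\ref{morifiberspace} the base $V$ is a surface and both maps descend to commuting automorphisms $\bar\phi,\bar\psi$ of $V$. A block-triangular computation (as in Lemma~\ref{autosdescend}) shows the leading eigenvalue is inherited by $\bar\phi^\ast$, so $\bar\phi$ and likewise $\bar\psi$ have positive entropy; after an equivariant resolution of $V$ if necessary, Proposition~\ref{commuteonsurface} gives $\bar\phi^m=\bar\psi^n$. Replacing $\phi$ by $\phi^m$ and $\psi$ by $\psi^n$ makes $\bar\phi=\bar\psi$, so $\tau=\psi^{-1}\phi$ lies in $\Aut(X/V)$ with $\phi=\psi\tau$: this is conclusion (2). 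If instead $R$ is divisorial with exceptional divisor $E$ and $E$ is periodic under both maps, then after a common iterate---squared if needed to undo a swap of the two rulings when $E\cong\P^1\times\P^1$---both maps fix $R$ and descend to automorphisms of the terminal threefold $Y=Z$ by Lemmas~\ref{autosdescend} and~\ref{periodicexceptional}. Since $\rho(Y)=\rho(X)-1$, this is conclusion (1).

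The remaining case is that $R$ is divisorial with $E$ of infinite orbit under one map, say $\phi$. By Lemma~\ref{notpoint}, $\pi$ blows up a smooth curve and $E$ is a ruled surface, so $\phi$ satisfies Condition (A$^\prime$) and Lemma~\ref{imprimitive} yields a $\phi$-equivariant fibration $\pi_\phi:X\rat S_\phi=\Hilb_{[\xi]}(X)_{\red}$ onto a surface whose general fibre is an $E$-covering curve $\xi$. The crux is to descend $\psi$ along $\pi_\phi$. If $E$ is $\psi$-periodic this is immediate: after an iterate $\psi(E)=E$, whence $\psi(E_n)=\psi\phi^n(E)=\phi^n\psi(E)=E_n$, so $\psi$ fixes every fibre of $\pi_\phi$ contained in the Zariski-dense union $\bigcup_n E_n$ and therefore descends. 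If $E$ has infinite orbit under $\psi$ as well, then $\psi$ also satisfies Condition (A$^\prime$) (with its own leading eigenvector from Lemma~\ref{numdim}), and Lemma~\ref{ximovesalot} applied to $\psi$ produces a nonzero $k$ and a curve that is simultaneously $E$-covering and $\psi^k E$-covering; comparing numerical classes forces $\psi^k_\ast[\xi]=[\xi]$. After replacing $\psi$ by $\psi^k$, the map $\psi$ fixes the class $[\xi]$, hence permutes the finitely many components of $\Hilb(X)$ in that class; a further iterate fixes $\Hilb_{[\xi]}(X)$, and because $\rho_\xi$ is birational---so the fibre of $\pi_\phi$ is the unique curve of class $[\xi]$ through a general point---$\psi$ carries $\pi_\phi$-fibres to $\pi_\phi$-fibres and descends to $S_\phi$. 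In either subcase both maps become commuting positive-entropy automorphisms of (a resolution of) $S_\phi$, and Proposition~\ref{commuteonsurface} again yields conclusion (2).

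For the final assertion, when $X$ is a smooth blow-up of $M$ I would induct on the number of blow-ups, taking $\pi:X\to X'$ to be the last one. If its exceptional divisor has infinite orbit under $\phi$ or $\psi$, the previous paragraph already gives (2). Otherwise it is periodic under both, so after iterates $\phi,\psi$ descend to commuting positive-entropy automorphisms of the smaller smooth blow-up $X'$ of $M$; by induction $X'$ satisfies (2) through some $\pi':X'\rat V$ with $\bar\phi'=\bar\psi'$, and composing with the blow-down gives $\pi:X\rat V$ along which $\tau=\psi^{-1}\phi$ is still fibre-preserving, so (2) holds for $X$ (the base case $X=M$ being vacuous). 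I expect the main obstacle to be precisely the descent of $\psi$ in the doubly-infinite-orbit situation of the third paragraph: showing that commutation forces $\psi$ to fix the fibre class $[\xi]$ and the Hilbert-scheme component $\Hilb_{[\xi]}(X)$, where the interplay between Lemma~\ref{ximovesalot} and the numerical class of the covering curve must be handled with care.
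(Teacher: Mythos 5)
Your overall strategy---first step of the MMP, Lemma~\ref{morifiberspace} plus Proposition~\ref{commuteonsurface} in the Mori fiber space case, and the Hilbert-scheme fibration of Lemma~\ref{imprimitive} in the infinite-orbit case---is the same as the paper's, and both your doubly-infinite subcase and your final blow-up induction are essentially sound. But there are two genuine gaps in your treatment of the divisorial cases. The first: when \(E\) is periodic under both maps you declare conclusion (1) outright after descending to \(Y=Z\). Conclusion (1) requires \(Y\) to be \emph{singular}, and nothing in your setup rules out a contraction of type (E1) or (E2) of Theorem~\ref{conetheorem}, for which \(Z\) is smooth; in that case neither (1) nor (2) has been established and your argument simply stops. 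The paper handles this by induction on the Picard rank: if \(Y\) is smooth, replace \(X\) by \(Y\) (both maps descend to commuting positive entropy automorphisms and \(\rho\) drops by one) and rerun the entire case analysis, which must terminate either at a genuinely singular model, giving (1), or in one of the cases giving (2). You deploy exactly this induction for the blow-up assertion, but omit it in the general rationally connected case, where it is equally indispensable---indeed the point of the theorem (compare Corollary~\ref{induction}) is precisely that one cannot remain in the smooth category.

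The second gap is your mixed case (\(E\) of infinite \(\phi\)-orbit but \(\psi\)-periodic), which rests on a false step: from \(\psi(E_n)=E_n\) you conclude that \(\psi\) ``fixes every fibre of \(\pi_\phi\) contained in \(\bigcup_n E_n\).'' Preserving the divisor \(E_n\) only means that \(\psi\vert_{E_n}\) is \emph{some} automorphism of that ruled surface; it may permute the \(E_n\)-covering curves nontrivially, and may even exchange the two rulings (e.g.\ when \(E_n\cong\P^1\times\P^1\)), so it need not fix, nor even preserve individually, the fibres of \(\pi_\phi\). The descent could be repaired (after replacing \(\psi\) by \(\psi^2\) to exclude a ruling swap, permutation of the fibres lying in the Zariski-dense union of the \(E_n\) forces \(\psi_H\) to preserve the component \(\Hilb_{[\xi]}(X)\)), but the cleaner route---the one the paper takes---is to observe that the mixed case is vacuous: if \(\psi(E)=E\), then \(\psi(\phi^n(E))=\phi^n(\psi(E))=\phi^n(E)\) for every \(n\), so all the divisors \(\phi^n(E)\) are \(\psi\)-invariant; since a positive entropy automorphism can fix only finitely many such divisors, the set \(\set{\phi^n(E)}\) would be finite and \(E\) would be \(\phi\)-periodic as well, a contradiction. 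This commutation trick---periodicity of \(E\) under one of the maps forces periodicity under both---is the paper's key organizing observation, and with it your three divisorial subcases collapse to the paper's two.
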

\begin{proof}
Let \(\pi : X \to Y\) be the first step of the MMP applied to \(X\).  Suppose first that \(\pi : X \to Y\) is a Mori fiber space.  By Lemma~\ref{morifiberspace}, after replacing \(\phi\) and \(\psi\) by suitable iterates, we may assume that \(Y\) is a surface and that both \(\phi\) and \(\psi\) descend to positive entropy automorphisms \(\bar{\phi}\) and \(\bar{\psi}\) on \(Y\).  Again replacing \(\phi\) and \(\psi\) by iterates, by Proposition~\ref{commuteonsurface} we may assume that \(\bar{\phi} = \bar{\psi}\).  Then \(\tau = \phi \circ \psi^{-1}\) is an automorphism of \(X\) over \(Y\), and outcome (2) of the theorem is satisfied.

We must now treat the case in which \(\pi : X \to Y\) is a divisorial contraction, with exceptional divisor \(E\).  Suppose that \(E\) is either \(\phi\)-periodic or \(\psi\)-periodic; replacing by an iterate and exchanging \(\phi\) and \(\psi\) if needed, we may without loss of generality assume that \(E\) fixed by \(\phi\).  Then for any \(n > 0\), we have \(\phi(\psi^n(E)) = \psi^n(\phi(E)) = \psi^n(E)\), so that \(\psi^n(E)\) is \(\phi\)-invariant.  But \(\phi\) can fix at most finitely divisors, and it must be that the divisors \(\psi^n(E)\) are only a finite set, so \(E\) is \(\psi\)-periodic.  After replacing \(\psi\) by an iterate, we may assume that \(E\) is invariant for both \(\phi\) and \(\psi\), and then by Lemma~\ref{autosdescend} the automorphisms \(\phi\) and \(\psi\) descend to commuting automorphisms of \(Y\).  If \(Y\) is not smooth, this establishes case (1).  If \(Y\) is smooth, we replace \(X\) with \(Y\) and continue by induction.

Suppose instead that \(E\) has infinite orbit under both \(\phi\) and \(\psi\).  Let \(\xi \subset E\) be a general \(E\)-covering curve.  By Lemma~\ref{imprimitive}, after replacing \(\phi\) and \(\psi\) by suitable iterates, both are imprimitive over the same surface \(\Hilb_{[\xi]}(X)_{\red}\):
\[
\xymatrix{
X \ar@{-->}[d] \ar[r]^{\phi,\psi} & X \ar@{-->}[d]  \\
\Hilb_{[\xi]}(X)_{\red} \ar[r]^{\bar{\phi},\bar{\psi}} & \Hilb_{[\xi]}(X)_{\red}
}
\]
If \(\bar{\phi}\) and \(\bar{\psi}\) do not coincide, then the maps \(\bar{\phi}\) and \(\bar{\psi}\) lift to commuting positive entropy automorphisms of the minimal resolution \(S\) of \(\Hilb_{[\xi]}(X)\).  By Proposition~\ref{commuteonsurface}, after replacing \(\phi\) and \(\psi\) by suitable iterates, the maps \(\bar{\phi}\) and \(\bar{\psi}\) coincide. Then \(\phi \circ \psi^{-1}\) is an automorphism of \(X\), which fixes the fibers of \(\pi : X \rat S\), and \(\phi \circ \psi^{-1} = \tau \in \Aut(X/S)\) is an automorphism of \(X\) over \(S\).  
\end{proof}

\begin{example}
Let \(S\) be a rational surface with an automorphism \(\sigma : S \to S\), and let \(\tau\) be an infinite order automorphism of \(\P^1\).  Then we can take \(\phi = \sigma \times \id\) and \(\psi = \sigma \times \tau\) to obtain two commuting automorphisms of \(X = S \times \P^1\).
\end{example}

\section{Acknowledgments}

I am indebted to a number of people for discussions of various aspects of this problem both dynamical and algebro-geometric, especially Eric Bedford, Serge Cantat, Izzet Coskun, and James M\textsuperscript{c}Kernan.  Thanks too to Burt Totaro for some useful comments.

This material is based upon work supported by the National Science Foundation under agreement No.\ DMS-1128155. Any opinions, findings and conclusions or recommendations expressed in this material are those of the author and do not necessarily reflect the views of the National Science Foundation.

\singlespacing
\bibliographystyle{amsplain}
\bibliography{zrefs}

\end{document}